\documentclass[oneside,leqno,12pt]{article}

\usepackage{graphicx}
\usepackage{xcolor}
\usepackage{tikz}
\usepackage{amssymb}
\usepackage{amsmath}
\usepackage{nicematrix}
\usepackage{amsfonts}
\usepackage{amsthm}
\usepackage{comment}
\usepackage{mathtools}
\usepackage{centernot}
\usepackage{tensor}
\usepackage[backref=page]{hyperref}
\usepackage{fancyhdr}
\usepackage{pxfonts}
\usepackage[T1]{fontenc}
\usepackage[toc,title]{appendix}
\usepackage[noabbrev]{cleveref}
\usepackage[inline]{enumitem}
\usepackage{import}
\usepackage[font=footnotesize]{caption}
\usepackage[normalem]{ulem}
\usepackage{setspace}
\usepackage{pdfpages}

\crefformat{section}{\S#2#1#3}
\crefformat{subsection}{\S#2#1#3}

\usepackage[margin=2cm]{geometry}

\numberwithin{equation}{section}
\newtheorem{theorem}{Theorem}[section]

\newtheorem{lemma}[theorem]{Lemma}

\theoremstyle{definition}
\newtheorem{definition}[theorem]{Definition}

\newtheorem{remark}[theorem]{Remark}

\renewcommand{\O}{\Omega}
\renewcommand{\o}{\omega}
\renewcommand{\a}{\alpha}

\newcommand{\g}{\gamma}

\newcommand{\eps}{\varepsilon}

\newcommand{\sym}{\text{sym}}

\newcommand{\D}{\nabla}
\newcommand{\II}{\mathrm{I\!I}}

\renewcommand{\P}{\mathbb{P}}
\newcommand{\R}{\mathbb{R}}

\newcommand{\p}{\partial}
\newcommand{\oII}{\overline{\II}}

\newcommand{\og}{\overline{g}}
\newcommand{\oS}{\overline{S}}

\newcommand{\K}{\widetilde{K}}
\newcommand{\wtA}{\widetilde{\mathcal{A}}}
\newcommand{\A}{\mathcal{A}}
\newcommand{\E}{\mathcal{E}}

\DeclareMathOperator{\dist}{dist}

\DeclareMathOperator{\SO}{SO}

\DeclareMathOperator{\Sym}{Sym}
\DeclareMathOperator{\Tr}{Tr}
\DeclareMathOperator*{\Glim}{\Gamma-lim}
\DeclareMathOperator{\id}{Id}

\newcommand{\xto}[1]{\xrightarrow[]{#1}}

\newcommand{\wto}{\rightharpoonup}
\newcommand{\xwto}[1]{\xrightharpoonup[]{#1}}
\newcommand{\imm}{\quad\Leftrightarrow\quad}

\newcommand{\norm}[1]{\left\lVert#1\right\rVert}     
\newcommand{\inn}[1]{\left\langle#1\right\rangle}    
\newcommand{\abs}[1]{\left|#1\right|}                
\newcommand{\bra}[1]{\left(#1\right)}                
\newcommand{\sqbra}[1]{\left[#1\right]}              
\newcommand{\set}[1]{\left\{#1\right\}}              

\makeatletter
\newcommand{\superimpose}[2]{{%
		\ooalign{%
			\hfil$\m@th#1\@firstoftwo#2$\hfil\cr
			\hfil$\m@th#1\@secondoftwo#2$\hfil\cr
		}%
}}
\makeatother
\newcommand{\fII}{
	\mathpalette\superimpose{{\rotatebox{12}{\textendash}}{\II}}
}

\newcommand{\wtfII}{
	\mathpalette\superimpose{{\rotatebox{12}{\textendash}}{\overline{\II}}}
}

\title{Shape-Transition in Non-Euclidean Ribbons}
\author{Noam Shalev \\
	\normalsize Advisor: Cy Maor}
\date{}

\begin{document}

\maketitle
\begin{center}
	{\Large\bfseries Abstract}
\end{center}
\vspace{0.1cm}
\begin{center}
\begin{minipage}{0.8\textwidth}
\setstretch{1.4} 

Ribbons are thin elastic bodies whose thickness $t$ is much smaller than their width $w$, which is in turn much smaller than their length. 
Starting from a three-dimensional model, 
we derive a one-dimensional limit theory for ribbons with rough prestrain, by means of $\Gamma$-convergence. 
Our model shows that for narrow ribbons, the energy minimizer coincides with the reference (effective) second fundamental form along the midline, while for wide ribbons this is generically not the case. 
This proves the existence of shape-transitions in such ribbons, as observed in many experiments, generalizing recent results by Maor \& Mora to rough prestrains, which more accurately model many of the relevant physical systems.
Our analysis combines techniques from the study of Euclidean ribbons due to Freddi et al., the work of Schmidt on dimension reduction of prestrained plates, and a new structure theorem on the scaled limiting strain due to Maor \& Mora.

\end{minipage}
\end{center}
\vspace{1cm}
\section*{Acknowledgements}
I am deeply grateful to my advisor, Prof.~Cy Maor, for his patient guidance, intellectual encouragement, and clarity in explaining even the most challenging ideas. His mentorship has been invaluable to both this work and my growth as a researcher.

I thank my wife, Zohar, for her love and support throughout this journey, and my family for always being there for me.
\newpage
\tableofcontents
\section*{Reader's Guide}

This thesis assumes familiarity with the mathematical theory of (non-linear) elasticity 
and the framework of $\Gamma$-convergence.
Readers well-versed in these topics may skip Section~\ref{sec:background} entirely.
Readers less familiar with this material are encouraged to begin with 
Section~\ref{sec:background}, which provides a self-contained overview, 
before continuing to the main body of the thesis. For a more thorough 
treatment, we recommend \cite{antman95, ciarlet00, lewicka-prestressed}.

The main results are stated in Section~\ref{sec:introduction}. 
The principal contributions of this thesis are contained in 
Sections~\ref{sec:narrow-ribbons} and~\ref{sec:wide-ribbons}.

\newpage
\section{Introduction}\label{sec:introduction}
\subsection{Prestrained Ribbons and Shape Transitions}\label{ss:prestrained-ribbons}
Ribbons are thin elastic bodies whose thickness $t$ is significantly smaller than their width $w$, i.e., $t\ll w \ll L$ (where $L$ is the length of the midline of the ribbon). Along with other thin bodies such as plates or rods, ribbons have attracted significant mathematical investigation; we refer to \cite{fhmp16b,mm25} for a thorough overview of relevant literature and applications.  

Many ribbons have internal geometries
that are incompatible with Euclidean space, e.g., due to inhomogeneous swelling, plastic deformations
or differential growth. Such ribbons do not have a stress-free configuration: they exhibit stress even
in the absence of external forces or prescribed boundary conditions.
One central goal of the study of ribbons is to determine
the relation between their local intrinsic geometry and their emergent global structure. 
We can view ribbons as an interpolation between plates ($w \approx L$) and rods $(w\approx t$). Loosely speaking, wider ribbons are more plate-like, so we expect that their emergent configuration is less flexible, as their second fundamental form is constrained by the metric of their midsurface (via the Gauss--Codazzi equations). 

Narrower ribbons, however, are expected to behave like rods, and their  second fundamental form is not constrained. 
This \emph{shape-transition} has been observed in recent experiments \cite{gsd16,zgds19, sls21,lssm21} for certain geometries; it was explained by formal asymptotics in \cite{gsd16, lssm21}, and a rigorous analysis of this phenomenon was first given recently in \cite{mm25}.  

In this last work, the transition is exhibited based on the relationship between the thin dimensions of the ribbon: 
different limiting 1D-theories are obtained when taking $t\to0$ and then $w\to 0$ (\textit{wide} ribbons), versus taking the double limit $t,w\to 0$ under the assumption $w^2 \ll t$ (\textit{narrow} ribbons).\footnote{The problem of deriving other limits (such as double limits with $w^2 \gtrsim t$) from three-dimensional elasticity is still open.}
Analyzing the behavior of minimizers of these limiting 1D-theories for a given geometry shows the existence of a shape transition.
 
 The starting point --- i.e., the three dimension ribbon model --- considered in \cite{mm25} and in this work is as follows:
 The domain of the ribbon is given by 
$\O_{t,w} = (0,L)\times (-\frac{w}{2},\frac{w}{2})\times (-\frac{t}{2},\frac{t}{2})$, and the elastic energy associated with a configuration $v\in W^{1,2}(\O_{t,w};\R^3)$ is
	\[
	E^{t,w}(v)= \fint_{\O_{t,w}} W(\D v\circ P^{-1}_{t,w})\,dV,
	\]
where $W:\R^{3\times 3}\to [0,\infty]$ is the elastic energy density (satisfying standard assumptions detailed below),  $P_{t,w}:T\O_{t,w}\to \R^3$ is
an orientation-preserving invertible map (representing the prestrain) which induces a (rough) metric on $\O_{t,w}$ via $g_{t,w}(u_1,u_2)=\inn{P_{t,w}(u_1),P_{t,w}(u_2)}$, and $V$ is an appropriate volume form. 
Here, $\fint$ means the integral divided by the volume of the domain.

When the prestrain is non-flat (i.e., when $g_{t,w}$ is smooth enough and its  Riemannian curvature tensor does not vanish),  the elastic energy of any configuration is bounded away from zero \cite{lewpak11},
meaning that $\eps_{t,w}^2\coloneqq \inf E^{t,w} >0$.  
We refer to $\eps_{t,w}$ (which is well-defined also for rough prestrains) as the \emph{natural energy scaling} of the problem; whenever it does not vanish, we would like to obtain a limiting theory (in terms of $\Gamma$-convergence) in this scaling.

The main difference between this work and \cite{mm25} is the generality of the map $P_{t,w}$ considered.
Namely, in \cite{mm25} it is assumed that $P_{t,w}$ is a restriction to $\O_{t,w}$ of a smooth map $P:\O_{1,1}\to\R^3$. 
In this work,
we develop limit models for non-Euclidean ribbons whose prestrain has the form 
\begin{equation}\label{eq:prestrain}
P^{-1}_{t,w} = I + t\; B\bra{z_1,\frac{z_2}{w},\frac{z_3}{t}} +o(t)
\end{equation} in local coordinates, for some symmetric matrix-field $B\in L^\infty(\O_{1,1};\R^{3\times 3}_\sym)$ (where here, by $o(t)$ we mean uniformly with respect to $z\in \O_{1,1}$). 

In this notation, the assumptions of \cite{mm25} imply that the field $B$ in \eqref{eq:prestrain} must be independent of $x_2$ and depend linearly on $x_3$.\footnote{We note that the results of \cite{mm25} allow for a more general structure for the terms in \eqref{eq:prestrain} which dominate the $O(t)$ scaling (which are here assumed to be the zeroth-order term $I$).  The results of this thesis generalize to  the same structure as in \cite{mm25}, and the choice of the structure \eqref{eq:prestrain} of the prestrain is in order to focus on the most essential parts of the analysis.}
These assumptions --- while being very natural from a differential-geometric point of view --- fail to encompass the geometry of many of the prestrained/non-Euclidean ribbons studied in the physical literature, which have a multi-layer or other piecewise-constant structure \cite{aeks11,cmsh12, sls21, lssm21}.
These systems typically  have a prestrain structure of the type \eqref{eq:prestrain}, see Figure~\ref{fig:1}.
The aim of this work is thus to generalize the rigorous analysis of shape transitions in ribbons to such physically-relevant geometries.

\begin{figure}
	\captionsetup{width=0.95\linewidth}
	\centering
	\scalebox{1}{\def\svgwidth{\columnwidth}
\begingroup%
  \makeatletter%
  \providecommand\color[2][]{%
    \errmessage{(Inkscape) Color is used for the text in Inkscape, but the package 'color.sty' is not loaded}%
    \renewcommand\color[2][]{}%
  }%
  \providecommand\transparent[1]{%
    \errmessage{(Inkscape) Transparency is used (non-zero) for the text in Inkscape, but the package 'transparent.sty' is not loaded}%
    \renewcommand\transparent[1]{}%
  }%
  \providecommand\rotatebox[2]{#2}%
  \newcommand*\fsize{\dimexpr\f@size pt\relax}%
  \newcommand*\lineheight[1]{\fontsize{\fsize}{#1\fsize}\selectfont}%
  \ifx\svgwidth\undefined%
    \setlength{\unitlength}{1495.33112774bp}%
    \ifx\svgscale\undefined%
      \relax%
    \else%
      \setlength{\unitlength}{\unitlength * \real{\svgscale}}%
    \fi%
  \else%
    \setlength{\unitlength}{\svgwidth}%
  \fi%
  \global\let\svgwidth\undefined%
  \global\let\svgscale\undefined%
  \makeatother%
  \begin{picture}(1,0.22630034)%
    \lineheight{1}%
    \setlength\tabcolsep{0pt}%
    \put(0,0){\includegraphics[width=\unitlength,page=1]{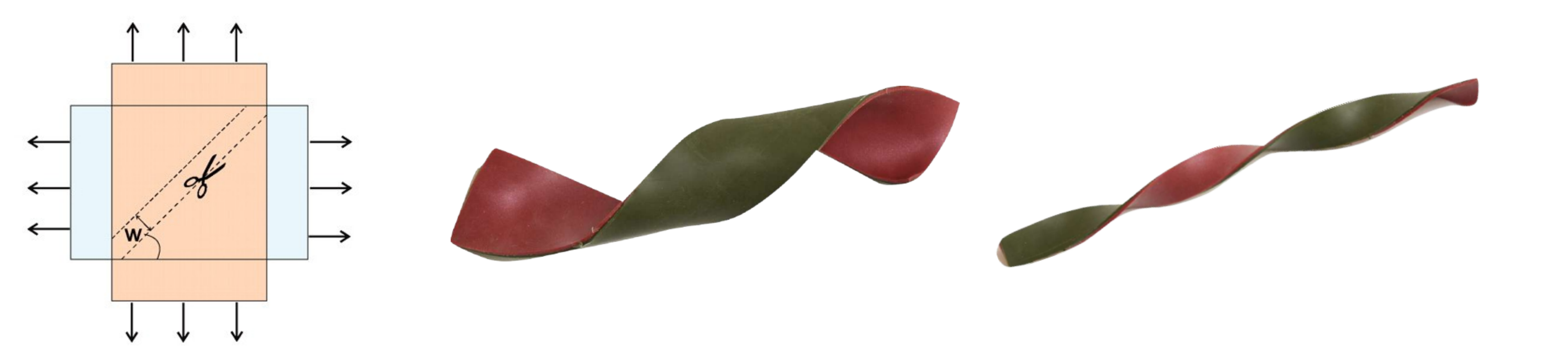}}%
    \put(0.41499429,0.20597668){\color[rgb]{0,0,0}\makebox(0,0)[lt]{\lineheight{1.25}\smash{\begin{tabular}[t]{l}Wide\end{tabular}}}}%
    \put(0.78112927,0.20641423){\color[rgb]{0,0,0}\makebox(0,0)[lt]{\lineheight{1.25}\smash{\begin{tabular}[t]{l}Narrow\end{tabular}}}}%
    \put(0,0){\includegraphics[width=\unitlength,page=2]{paper_fig.pdf}}%
    \put(0.10763914,0.06958769){\color[rgb]{0,0,0}\makebox(0,0)[lt]{\lineheight{1.25}\smash{\begin{tabular}[t]{l}\scalebox{0.7}{$45^\circ$}\end{tabular}}}}%
  \end{picture}%
\endgroup%
}
\caption{
A prototypical example of an experimentally observed shape-transition (figure adapted from \cite{aeks11, lssm21}). Two planar latex sheets are stretched uniaxially along perpendicular directions and then glued together, forming a residually stressed compound sheet. A strip is then cut from this sheet along a direction that forms an angle $45^\circ$ with the stretching direction. The ribbon transitions between
two helical configurations: one cut from a cylinder in the wide regime, and a helicoid in the narrow regime. This scenario may be modelled by a bilayered ribbon whose prestrain has the (non-smooth) piecewise-defined form $P= I+t B_1$ when $x_3>0$ and $P=I+t B_2$ when $x_3<0$, where $B_1\neq B_2$ are distinct constant matrix-fields.
}
\label{fig:1}
\end{figure}

In general, for prestrains of the type \eqref{eq:prestrain}, one should expect that the energy scaling will be $\eps_{t,w}\sim t$ in all regimes, as this is the scaling for both non-Euclidean plates \cite{sch07} and rods \cite{ko18}.
Indeed, part of the results of this work will show that this is the natural energy scaling.
This scaling is of higher energy than the ones observed in \cite{mm25} (in some regimes), and in rods with smooth metrics \cite{ms19}, and occurs  because rough prestrains generically induce some non-zero excess energy, which bars $\inf E^{t,w}$ from being $o(t^2)$.

\subsection{Main Results.}
 We show that the intrinsic energy scaling is indeed $\eps_{t,w}\sim t$ for a generic prestrain (where the precise notion of non-genericity is given by \eqref{eq:non-generic-prestrain}). 
 The limit models we obtain as $t,w\to 0$ show that for narrow ribbons, the energy minimizer represents a surface whose second fundamental form coincides with the reference second fundamental form along the midline, while for wide ribbons it does not, if the reference second fundamental form has non-zero determinant --- that is, if it violates the Gauss equation with respect to the Euclidean metric (in the terminology of \cite{lssm21,mm25}, this is called \emph{Gauss incompatibility}).
 Thus we provide a rigorous proof of shape-transition in this case.
 More specifically, we show:
\begin{enumerate}
\item 
For narrow ribbons ($w^2\ll t$), the $\Gamma$-limit admits the form
\begin{equation}\label{eq:I0-intro}
I^{0}(\fII) = \frac{1}{24}\int_0^L Q_2(\fII-\wtfII)dx_1 + C_{\text{excess}},
\end{equation}
where $\fII$ is the second fundamental form (of the midsurface, along the midline) \emph{associated} with a finite-energy configuration (a notion which will be made clear in \S~\ref{sec:narrow-ribbons}).
Here, $\wtfII$ is the \emph{reference} (effective) second fundamental form and  $C_{\text{excess}}\geq0$ is the \emph{excess energy} (both of which are induced only by $W$ and $B$), and $Q_2$ is a quadratic form emerging from $W$ (see \S~\ref{ss:quadratic-forms}).
Unless $C_{\text{excess}}$ vanishes, this shows that the minimal energy is positive, showing that $\eps_{t,w}\sim t $ is indeed the natural scaling.
Note that $C_{\text{excess}}=0$ only if $B$ has a special structure (as in the case treated in \cite{mm25}), see equation \eqref{eq:non-generic-prestrain} for an exact formula; in particular, it does not vanish for bilayered structures as depicted in Figure~\ref{fig:1}.
Furthermore, as can be immediately seen from equation \eqref{eq:I0-intro}, the energy minimizer is $\fII=\wtfII$.

\item
For wide ribbons (first $t\to 0$, then $w\to 0$), the (iterated) $\Gamma$-limit admits the form
\[	J^0(\fII)\coloneqq
	\frac{1}{24}\int_0^L \bra{
		Q_2(\fII-\wtfII)+\alpha_Q^+(\det \fII)^+ + \alpha_Q^-(\det \fII)^-
	}dx_1 +C_{\text{excess}},
\]
where $\alpha_Q^\pm$ are positive constants induced by $Q_2$. We note that this $\Gamma$-limit is obtained under an additional structure assumption on $B$; see \S~\ref{sec:wide-ribbons} for details. Hence, for generic wide ribbons --- i.e., whenever $C_{\text{excess}}$ is positive --- the natural scaling is still $\eps_{t,w} \sim t$. Moreover, adapting from \cite[Proposition~5.8]{mm25}, we conclude that an energy minimizer $\fII_{\text{min}}$ of $J^0$ satisfies the pointwise bound \[\abs{\fII_{\text{min}}(x_1)-\wtfII(x_1)}\geq c \abs{\kappa_1(x_1)}\]
for some $c>0$, where $\kappa_1$ is the smallest magnitude eigenvalue of $\wtfII$. Hence, in the Gauss-incompatible case (where $\det \wtfII \neq 0$), the limiting second fundamental form \emph{differs} from the reference form, and thus from the minimizer of \eqref{eq:I0-intro}.
\end{enumerate}

We also supply each of the above $\Gamma$-convergence results with the respective compactness result, which is a prerequisite for concluding that approximate minimizers converge to the minimizer of the limiting energy (see \S\ref{sec:gamma} for details).

\subsection{Relation to Previous Works}
The study of dimension-reduction theories for thin elastic bodies has a long history;
we mention \cite{k1850, love27, antman95, ciarlet00} for classical accounts and \cite{ldr96, bff00,fjm2002, mm03,fjm06, mm08, fmp12} for rigorous results in Euclidean elasticity (without attempting to be exhaustive).   
Related limit theories for prestrained thin bodies were derived in \cite{sch07,lewpak11, ks14, fhmp16b, crs17, bls16, lrr17,ko18,all19,ms19,ll20,bnps22, pg22, lewicka-prestressed}.

The case of narrow ribbons in this work builds on \cite{fmp12}, in which  the authors obtain a limit model for Euclidean narrow ribbons ($B\equiv 0$) in the same energy regime $\eps_{t,w}\sim t$. 
There, the limiting energy functional depends only on the first column of the second fundamental form, i.e., the variables that can directly be deduced from a frame along the midline, similar to rod-like limits.
Here we opted for a more detailed limiting functional, involving the full second fundamental form along the midline, for two reasons: first, it is the convention in many works in the physics literature on non-Euclidean/prestrained ribbons \cite{gsd16,sls21,lssm21}, and, more importantly, since the essence of the shape transition we show is the difference of the minimal second fundamental form along the midline between the difference regimes.
Thus, our notion of convergence is somewhat more involved.

The main challenge in treating a general prestrain $B$ (for narrow ribbons) arises from the in-plane-dependence of the prestrain --- indeed, for prestrains of the form $I+t B(x_3/t)$, the result can be obtained by combining the analysis \cite{fmp12} of flat ribbons and the analysis \cite{sch07} of plates with bilayer prestrains.
To treat this general case, the main tool is a more detailed structure theorem on the scaled limiting strain which was discovered recently in \cite{mm25} (see \S~\ref{sec:structure-strain} below).
Our analysis reveals the geometric meaning of this result: 
For configurations with energy $\eps_{t,w} \sim t$, the second fundamental form of the midsurface is of order one, and the geodesic curvature of the midline in the midsurface is of order $t/w$. 
The structural result shows how the limiting strain contains the information of both these contributions. 
Thus, in order to obtain a limiting energy that depends only on the difference between the second fundamental form of the configuration and the reference (effective) one that is induced by prestrain $B$, we must also read off from $B$ the leading order geodesic curvature (even though it vanishes in the limit), and construct the recovery sequence such that it matches this geodesic curvature.

To complete the shape-transition picture, one must establish that in the wide ribbon regime, energy minimizers differ from the reference second fundamental form along the midline. Our analysis in this case largely follows the arguments in \cite{mm25}, adapting the prestrained shell theory developed in \cite{pg22}. A new obstacle arises, however, from the in-plane-dependence of $B$; specifically, the $x_2$-dependence of its $(2,2)$-component. For generic $B$, this dependence prevents us from matching the proposed lower bound with a recovery sequence. We therefore impose a structural assumption on $B$ that circumvents this issue; see \S\ref{sec:wide-ribbons} for details.

\subsection{Outline}
In Section~\ref{sec:background} we provide a self-contained exposition on $\Gamma$-convergence and other concepts in the theory of non-linear elasticity --- readers familiar with these topics may skip that section.
In Section~\ref{sec:setting} we describe the setup of the three-dimensional model under consideration and the corresponding energy functional. In Section~\ref{sec:preliminaries} we gather the relevant preliminaries, including (i) the definitions of the reference second fundamental form and excess energy, (ii) compactness lemmas for narrow ribbons, and (iii) remarks regarding the scaled limiting strain. In Section~\ref{sec:narrow-ribbons} we present the limiting theory for narrow ribbons, complete with rigorous derivations of the lower bound and the recovery sequence. Lastly, in Section~\ref{sec:wide-ribbons}, we rigorously derive a limiting model for wide ribbons --- at this time under an additional assumption on the prestrain.

\newpage
\section{Tools and Background}\label{sec:background}
This section provides a brief overview of the variational 
theory of elastic thin bodies. It is not intended to be comprehensive; 
for a thorough treatment, we refer to \cite{ciarlet00} for classical 
elasticity, \cite{lewicka-prestressed} for modern variational approaches, and the lectures notes \cite{cynotes} for an overview of non-Euclidean elasticity.

\subsection{$\Gamma$-Convergence}\label{sec:gamma}
The notion of \emph{$\mathit{\Gamma}$-convergence} provides a natural framework for studying limits of variational problems. It is central to the dimension reduction analysis in this thesis: when combined with an appropriate compactness assumption, $\Gamma$-convergence guarantees that approximate minimizers of a sequence of energy functionals converge to minimizers of the $\Gamma$-limit. The standard treatment of $\Gamma$-convergence takes place in metric spaces; we refer to \cite{lewicka-prestressed} for a short introduction to the topic. Here, we adopt a more general viewpoint, formulating the theory for any abstract notion of sequential convergence.

To this end, suppose throughout this section that we have some notion of sequential convergence relating sequences $(x_n) \subseteq X$ to limits $y \in Y$, where $X$ and $Y$ may be distinct spaces (formally, this can be viewed as convergence in some ambient space containing both $X$ and $Y$). We assume that the notion of convergence satisfies the following natural properties: (i) limits are unique, and (ii) every subsequence of a convergent sequence converges to the same limit.

\begin{definition} Let $F_n: X \to [-\infty,\infty]$ be a sequence of functions, and let $F: Y\to [-\infty,\infty]$. We say that this sequence \textit{$\mathit{\Gamma}$-converges} to $F$ (and write $F_n \xto{\Gamma} F$ or $\Glim F_n = F$), whenever the two following conditions hold: 
\begin{itemize}
\item[(i)] For every $y\in Y$ and every sequence $(x_n)\subseteq X$, if $x_n\to y$ then $F(y)\leq \liminf_{n\to \infty} F_n(x_n)$.
\item[(ii)] For every $y\in Y$, there exists a sequence $(x_n)\subseteq X$ (called a  \emph{recovery sequence}) such that $x_n\to y$ and $F(y) = \lim_{n\to \infty} F_n(x_n)$.
\end{itemize}
\end{definition}

\begin{theorem} Assume that $F_n \xto{\Gamma} F$ (where $F_n$ are defined on $X$ and $F$ on $Y$). Suppose that the following notion of compactness is satisfied:
\begin{itemize}
	\item If $(x_n)\subseteq X$ is a sequence such that $\limsup_{n\to \infty} \abs{F_n(x_n)} < \infty$, then, up to passing to a subsequence, $x_n\to y$ for some $y\in Y$.
\end{itemize}
Then the limit function $F$ has at least one minimizer in $Y$, and moreover:
\begin{itemize}
	\item[(i)] If $(x_n)\subseteq X$ is a sequence of \emph{approximate minimizers}, i.e., if it satisfies
\begin{equation}\label{eq:approx-min}
\lim_{n\to \infty} \abs{F_n(x_n) - \inf_X F_n} =0,
\end{equation} 
then $(x_n)$ has a subsequence that converges to a minimizer of $F$.
\item[(ii)] For every minimizer $y$ of $F$, there exists a sequence $(x_i)$ of approximate minimizers  such that $x_n\to y$.
\end{itemize}
\end{theorem}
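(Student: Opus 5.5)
The argument is the standard one, adapted to abstract sequential convergence. Set $m_n \coloneqq \inf_X F_n$. I first show that $\limsup_n m_n < \infty$ and that $\liminf_n m_n > -\infty$ along the relevant subsequences. For the upper bound I need some $y_0\in Y$ with $F(y_0)<\infty$. Then a recovery sequence $(\hat x_n)$ for $y_0$ gives $m_n\le F_n(\hat x_n)\to F(y_0)$, so $\limsup m_n\le F(y_0)<\infty$. (If $F\equiv+\infty$, every point is a minimizer and the claims become vacuous or easy; I would treat this degenerate case separately. I would also implicitly assume the $F_n$ do not take the value $-\infty$ in a way that breaks the bound $\limsup|F_n(x_n)|<\infty$.)

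Next, pick approximate minimizers $(x_n)$ satisfying \eqref{eq:approx-min}. These exist, e.g.\ with $F_n(x_n)\le m_n+1/n$ when $m_n$ is finite. I then need $\limsup|F_n(x_n)|<\infty$. The upper bound follows from the previous step. For the lower bound, suppose $F_n(x_n)\to-\infty$ along a subsequence; this is the step I expect to be the main delicate point. I would handle it by truncating: replace $x_n$ by a sequence with bounded energies. Concretely, if $m_n\to-\infty$ along a subsequence, choose $x_n'$ with $F_n(x_n')\in[-K,\,F(y_0)+1]$ by interpolating. This is not available abstractly, so I expect to need a mild additional assumption, namely that $F$ is bounded below or that $m_n$ is bounded below. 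With that assumption the sequence has bounded energy, and compactness yields a subsequence $x_{n_k}\to y^*$. Property (ii) of the convergence guarantees that subsequences keep their limits, which is what makes the $\Gamma$-liminf applicable along $n_k$. Strictly, it requires the liminf inequality along subsequences, which follows by padding the subsequence into a full sequence converging to $y^*$, for instance using a recovery sequence for $y^*$ in the gaps.

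Now compare with an arbitrary $y\in Y$. Let $(\tilde x_n)$ be a recovery sequence for $y$. Then
\[
F(y^*)\le\liminf_k F_{n_k}(x_{n_k}) = \liminf_k m_{n_k}\le \lim_k F_{n_k}(\tilde x_{n_k}) = F(y).
\]
Hence $y^*$ is a minimizer, which proves existence and (i). Taking $y=y^*$ also gives $\lim_k m_{n_k}=\min F$, and since this holds for every subsequence, $m_n\to\min F$.

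For (ii), let $y$ be a minimizer with recovery sequence $(x_n)$. Then $F_n(x_n)\to F(y)=\min F$, and by the previous step $m_n\to\min F$. Therefore $|F_n(x_n)-m_n|\to0$, so $(x_n)$ is a sequence of approximate minimizers converging to $y$.
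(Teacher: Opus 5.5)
The core of your argument is the paper's proof: approximate minimizers, compactness, the chain $F(y^*)\le\liminf_k F_{n_k}(x_{n_k})=\liminf_k m_{n_k}\le\lim_k F_{n_k}(\tilde x_{n_k})=F(y)$, and a recovery sequence of a minimizer for (ii). You are also right that the stated hypotheses are not enough. The paper passes from $\limsup_n\inf_X F_n<\infty$ and \eqref{eq:approx-min} directly to bounded energies. That step silently uses both $\inf_Y F<\infty$ and a lower bound on $m_n$, and the theorem is false without them. However, two of your proposed repairs fail.

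\emph{Bounded-below $F$ is not enough.} On $X=Y=\R$ with the usual convergence, let $F_n=0$ on $[-1,1]$, $F_n(n)=-n$ for $n\ge 2$, and $F_n=+\infty$ elsewhere. Then $F_n\xto{\Gamma}F$ with $F=0$ on $[-1,1]$ and $F=+\infty$ elsewhere. The compactness hypothesis holds, since bounded energies force $x_n\in[-1,1]$ eventually, and $F\ge 0$. Yet every approximately minimizing sequence is eventually $x_n=n$, so both (i) and (ii) fail. Of your two candidate hypotheses, only $\liminf_n\inf_X F_n>-\infty$ works. It holds in all of the paper's applications, since $I^{t,w}\ge 0$.

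\emph{The case $F\equiv+\infty$ is neither vacuous nor easy.} Take $F_n(n)=n$ and $F_n=+\infty$ elsewhere. Then $F\equiv+\infty$ and compactness holds vacuously. Again the only approximately minimizing sequences are eventually $x_n=n$, so (i) and (ii) fail. Hence $\inf_Y F<\infty$ must be assumed, not handled as a separate case.

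With these two hypotheses added, your proof is correct and coincides with the paper's. Two further steps need properties the paper's two axioms for abstract convergence do not give:
\begin{itemize}
\item Your padding step needs that merging two sequences converging to $y^*$ yields a sequence converging to $y^*$.
\item Your ``every subsequence'' step giving $m_n\to\min F$ needs that the compactness hypothesis applies along subsequences.
\end{itemize}
Both hold in the paper's concrete settings, and the paper relies on them just as tacitly.
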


\begin{proof}
Let $(x_n)\subseteq X$ be a sequence of approximate minimizers. Because $F_n\xto{\Gamma} F$,
we have $\limsup_{n\to \infty} \inf_X F_n <\infty$. Combining with \eqref{eq:approx-min}, we conclude that (up to subsequences) $x_n\to y$ for some $y\in Y$. Now let $\bar{y}\in Y$ be arbitrary, and let $(\bar{x}_n)\subseteq X$ be a recovery sequence for $\bar{y}$. Then
\begin{equation}\label{eq:gamma-conv-ineq-chain}
F(y) \leq \liminf_{n\to \infty} F_n(x_n) \leq \liminf_{n\to \infty} \inf_X F_n \leq \liminf_{n\to \infty} F(\bar{x}_n) = F(\bar{y}).
\end{equation}
Therefore $y$ is a minimizer of $F$, and we also proved (i). Additionally, taking $\bar{y}=y$ in \eqref{eq:gamma-conv-ineq-chain} and using the definition of $\Gamma$-convergence, we obtain
$\inf_Y F = F(y) = \lim_{n\to \infty} \inf_X F_n$. For (ii), let $y$ be any minimizer of $F$ and let $(x_n)$ be a recovery sequence for $y$. Then, since $\lim_{n\to \infty} F_n(x_n)=F(y)=\inf_Y F$, we conclude that \eqref{eq:approx-min} holds, hence $(x_n)$ are approximate minimizers.
\end{proof}

\subsection{Elasticity Theory}\label{sec:elasticity}
\subsubsection{Hyperelastic Materials}\label{ss:hyperelastic}
Let $\O\subseteq \mathbb{R}^3$ be an open, bounded, connected domain with Lipschitz boundary, which we will refer to as an \textit{elastic body}. 
A function $y:\O \to \mathbb{R}^3$ is called a \textit{configuration} or \textit{deformation} of $\O$ (we will shortly specify the regularity assumptions on $y$). 
In the framework of a homogenous hyperelastic material, the \textit{elastic energy} corresponding to a configuration $y$ is given by 
\[E(y) = \int_\O W(\D y)dx,\]
where the \textit{elastic energy density} $W:\mathbb{R}^{3\times 3} \to [0,\infty]$ is assumed to satisfy:
\begin{itemize}
	\item[(i)] \textit{Regularity:} $W$ is $C^2$ in a neighbourhood of $\SO(3)$. 
	\item[(ii)] \textit{Energy well:} $W(F)=0$ if and only if $F\in \SO(3)$.
	\item[(iii)] \textit{Coercivity:} $W(F)\geq C \dist^2(F,\SO(3))$.
	\item[(iv)] \textit{Frame indifference:} $W(RF)=W(F)$ for all $R\in \SO(3)$. 
\end{itemize}
Intuitively, the energy density $W$ measures, at each point, how far 
the local deformation is from being a rigid motion. 
As we can see, the elastic energy depends on the derivative $\D y$ of the configuration, often referred to as the \textit{deformation gradient}. The natural setting for configurations in our case will be $W^{1,2}(\O; \R^3)$, which accommodates the required differentiability. Thus the elastic energy is a functional $E:W^{1,2}(\O;\R^3)\to [0,\infty]$.

\subsubsection{Thin Elastic Bodies}\label{ss:thin-bodies}
A thin body in Euclidean space is a three dimensional rectangular cuboid in which one or more of the sides are very small, relative to the others. 
We may classify thin films according to the relation between their dimensions: if 
$ \O_{t,w} = (0,L)\times \left(-\frac{w}{2},\frac{w}{2}\right)\times \left(-\frac{t}{2},\frac{t}{2}\right)$ has width $w$ and height $t$, then $\O$ is a thin body of one of the following types:
\begin{itemize}
	\item \textit{Plates:} $w\sim 1$ and $t\ll L$
	\item \textit{Rods: } $w\sim t \ll L$
	\item \textit{Ribbons:} $t\ll w\ll L$.
\end{itemize}
We seek to understand the limiting behaviour (in the sense of $\mathit{\Gamma}$-\textit{convergence}) of the rescaled energies
\[\widetilde{E}_{t,w}:W^{1,2}(\O;\R^3)\to [0,\infty], \hspace{5em}
\widetilde{E}_{t,w}(y) = \frac{1}{\eps_t^2} \fint_{\O_{t,w}} W(\D y) dx,\]
where $\eps_t^2$ is the typical energy. 
In this Euclidean case, $\eps_t$ is not intrinsic to the problem, but arises from boundary conditions or external forces. Indeed, if we do not impose any constraints, then for the identity configuration $y(x)=x$ we have vanishing energy regardless of $t,w$. Thus the question is: given $\eps_t$, what is the limiting problem as $t\to 0$ (and $w\to0$ in the cases of rods or ribbons)?
Different choices of $\eps_t$ give rise to well-studied energy regimes:
\begin{itemize}
	\item \textit{Kirchhoff bending:} $\eps_t=t^2$
	\item \textit{Linearised Kirchhoff:} $t^4\ll \eps_t \ll t^2$
	\item \textit{Föppl–von Kármán theory:} $\eps_t=t^4$
	\item \textit{Linearised Föppl–von Kármán:} $\eps_t\ll t^4$
\end{itemize}
This thesis will focus on the Kirchhoff bending regime. The case of plates in this regime was studied in the seminal paper \cite{fjm2002}, which also introduced a very useful rigidity estimate. It was shown that with $\eps_t\sim t^2$,
\[\widetilde{E}_t \xrightarrow[]{\quad{\Gamma}\quad} E_0(y) \coloneqq \begin{cases*}
\int_S Q_2(\II_y) dx &$y\in W^{2,2}(S;\mathbb{R}^3)$ and $\D y\in O(2,3)$ a.e.\\
+\infty&otherwise.
\end{cases*}\]
where $Q_2$ is some quadratic form emerging from $W$ (see \S~\ref{ss:quadratic-forms} below), and $\II_y=\D y^T \D n$ is the second fundamental form of $y(S)$, where $S=(0,\ell)\times \left(-\frac{w}{2},\frac{w}{2}\right)$ is called the \textit{midsurface}.
Loosely speaking, $\II_y$ encodes how the normal to $y(S)$ changes (also known as \textit{bending}).

\subsubsection{Non-Euclidean Elasticity}\label{ss:non-euclidean}
Many elastic bodies have internal geometries that are incompatible with Euclidean space (see \S~\ref{ss:prestrained-ribbons} for more details). We model such bodies by means of Riemannian geometry. 
The energy density $W$ above defines the archetypical behaviour of a single point.
In order to write the elastic energy of an elastic body $M$, we need to indicate how to ``implant'' $W$ at each point in the body.\footnote{In rational mechanics, what we describe here is known as a materially-uniform hyperelastic body; see \cite[Section 2]{ekm20} and \cite[Section 2]{Maor25} for recent accounts on this.}
To this end, we define an \emph{elastic body} as a triplet $(M,g,P)$, where $M$ is a compact Riemannian manifold (typically a bounded domain in $\R^3$), $g$ is a Riemannian metric (called the \emph{reference metric}), and $P:TM\to \mathbb{R}^3$ is the \emph{prestrain map} (or implant map, or plastic strain), 
which is an orientation-preserving map that is compatible with $g$: 
for all $v,w\in T_xM$,
\[
g_x(v,w) = \inn{P_x(v),P_x(w)},
\]
from which it follows that $g$ is determined by $P$. 
A \textit{configuration} of $M$ is an orientation-preserving map $v\in W^{1,2}(M;\mathbb{R}^3)$, and the \textit{elastic energy} of a configuration $v:M\to \mathbb{R}^3$ is given by
\[
E_M(v)=E_{M,P}(v) = \fint_M W(Dv\circ P^{-1})dV_{g},
\]
where $dV_{g}$ is the volume form on $M$, and $\fint$ is the integral divided by the total volume.
When $M$ is a domain in $\R^3$, we have a global coordinate chart, and then $dV_{g}=|\det g_x|^{1/2}dx$, and a possible choice of $P$ that is compatible with $g$ is $P_x=g_x^{1/2}$.

Note that by property (ii) of the energy density $W$, a configuration $v$ has vanishing energy if and only if $\D v^T \D v =g$ and $\det \D v >0$, or equivalently if and only if $v$ is an orientation-perserving isometric immersion of $(M,g)$. If $g$ is $C^2$ and $M$ is simply-connected, we have (\cite{lewpak11}):
\begin{theorem} $\inf E_g =0 \imm \mathcal{R}^g\equiv 0 \imm \text{up to changing coordinates,}\; g=I$.
\end{theorem}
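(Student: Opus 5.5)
We prove the chain of equivalences by showing that flatness implies zero infimum, and that zero infimum implies flatness.

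\emph{Flat $\Rightarrow$ zero infimum.} Suppose $\mathcal{R}^g\equiv 0$ and $M$ is simply connected. By the classical fundamental theorem of Riemannian geometry, there is a local isometry into $\R^3$. Concretely, we solve the overdetermined system $\D v = R$ with $R^TR=g$, $\det R>0$ and $\p_i R = R\,\Gamma_i$, where $\Gamma_i$ is built from the Christoffel symbols of $g$. The compatibility conditions of this system are exactly $\mathcal{R}^g=0$. Since $g\in C^2$, Frobenius' theorem on the simply connected $M$ yields a global orientation-preserving $v$ with $\D v^T\D v=g$. Then $\D v\circ P^{-1}$ lies in $\SO(3)$ pointwise, because it is orthogonal with positive determinant. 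Hence $E(v)=0$. Using $v$ as a chart gives $g=I$ in these coordinates. Conversely, if $g=I$ in some coordinates, the curvature vanishes, since curvature is tensorial. This settles the second equivalence and one direction of the first.

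\emph{Zero infimum $\Rightarrow$ flat.} This is the main obstacle, because the infimum need not be attained. Take $v_n$ with $E(v_n)\to0$, normalized to have mean zero. By coercivity (iii), $\dist(\D v_n P^{-1},\SO(3))\to0$ in $L^2$. Hence $\D v_n$ is bounded in $L^2$, and by Poincar\'e a subsequence converges weakly in $W^{1,2}$ to some $v$. Weak convergence alone does not preserve the pointwise constraint, so we need strong convergence. For this we use the Friesecke--James--M\"uller rigidity estimate, applied on small balls where $P$ is close to a constant up to an $O(r)$ error (using $g\in C^2$). This gives, on each ball $B$, a rotation $R_B$ with
\[
\norm{\D v_n - R_B P}_{L^2(B)}^2\lesssim \int_B \dist^2(\D v_n P^{-1},\SO(3)) + r^{2}|B|.
\]
From this we get equi-integrability and compactness of $\D v_n$, and hence strong $L^2$ convergence. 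In the limit, $\D v\, P^{-1}\in\SO(3)$ a.e., that is, $\D v^T\D v=g$ a.e. with $\det\D v>0$.

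\emph{Regularity of the limit and conclusion.} The remaining step is to show that a $W^{1,2}$ (indeed Lipschitz) orientation-preserving isometric immersion of a $C^2$ metric is regular enough to force flatness. We use the Reshetnyak/Liouville-type rigidity argument. Pulling back, the map $\D v P^{-1}$ is $\SO(3)$-valued and satisfies a first-order elliptic system: differentiating $\D v^T\D v=g$ weakly gives $\p_i\D v=\D v\,\Gamma_i$ in the distributional sense. Bootstrapping then gives $v\in C^2$. Differentiating once more, the compatibility of $\p_j(\D v\Gamma_i)=\p_i(\D v\Gamma_j)$ yields $\mathcal{R}^g=0$.

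Alternatively, $v$ is a Lipschitz local bi-Lipschitz homeomorphism. Distances are then preserved, so $(M,g)$ is locally isometric as a metric space to Euclidean space, and by Myers--Steenrod $v$ is $C^{2}$ smooth, again giving flatness.

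The delicate point is the ball-wise rigidity argument with a non-constant $P$. There I would follow the covering argument of \cite{lewpak11} and pass from $O(r)$ local errors to a vanishing global error as $r\to0$.
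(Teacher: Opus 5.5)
The paper gives no proof of this statement; it quotes it from \cite{lewpak11}. Your outline matches the standard argument behind that result: Frobenius for flat $\Rightarrow$ realizable; then strong convergence of a minimizing sequence, via rigidity on small balls, to a Lipschitz $v$ with $\D v^T\D v=g$ and $\det\D v>0$ a.e.; then regularity of $v$. The first two parts are acceptable as sketches.

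\textbf{The gap is the regularity step.} You assert that differentiating $\D v^T\D v=g$ weakly gives $\p_i\D v=\D v\,\Gamma_i$ in $\mathcal{D}'$. At that stage you only know $v\in W^{1,\infty}$, so the second derivatives $\p_k\p_i v$ are merely distributions. The product rule $\p_k(\p_iv\cdot\p_jv)=\p_k\p_iv\cdot\p_jv+\p_iv\cdot\p_k\p_jv$ then has no meaning, because the product of an $L^\infty$ function with such a distribution is undefined. The classical derivation of $\p_{ij}v=\Gamma^l_{ij}\p_lv$ presupposes that $v$ has second derivatives that are functions. That is exactly what has to be proved, so your bootstrap never starts.

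Your alternative route has the same defect in another place. That $v$ is a local bi-Lipschitz homeomorphism is not automatic for a Lipschitz map with $\D v\in\SO(3)\,P$ a.e. It requires quasiregular-map theory, or $C^1$ regularity together with the inverse function theorem, which is again the regularity you have not established.

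\textbf{The missing idea} is an equation for $v$ that involves neither second derivatives of $v$ nor products with distributions.
From $\D v^T\D v=g$ and $\det\D v=\sqrt{\det g}$ a.e.\ you get $\D v^{-T}=\D v\,g^{-1}$, hence $\operatorname{cof}\D v=\sqrt{\det g}\,\D v\,g^{-1}$ a.e.
The Piola identity $\operatorname{div}\operatorname{cof}\D v=0$ holds in $\mathcal{D}'$ for every $v\in W^{1,2}$ in dimension three, since the cofactor entries are $2\times2$ minors (null Lagrangians).
Thus $\p_j\bra{\sqrt{\det g}\,g^{jk}\p_kv}=0$ weakly, i.e.\ each component of $v$ is $\Delta_g$-harmonic.
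This is a linear divergence-form elliptic equation with $C^2$ coefficients, so Schauder theory gives $v\in C^{2,\alpha}_{\mathrm{loc}}$.
Only then may you differentiate $\p_iv\cdot\p_jv=g_{ij}$ classically to get $\p_{ij}v=\Gamma^l_{ij}\p_lv$.
This bootstraps to $v\in C^3$, and $\mathcal{R}^g=0$ follows from the symmetry of third derivatives.

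This is the Riemannian analogue of the Reshetnyak/Liouville argument you invoke: for $\SO(3)$-valued gradients, $\operatorname{cof}\D u=\D u$ gives $\Delta u=0$. So you named the right tool but described a mechanism that does not work.

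A minor point: in the first part, $v$ is only a local diffeomorphism. It provides local charts in which $g=I$, not a global one.
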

On thin bodies $\O_{t,w}=(0, L)\times \left(-\frac{w}{2},\frac{w}{2}\right)\times \left(-\frac{t}{2},\frac{t}{2}\right)$, in which the metric $g=g_t$ may possibly depend on $t$ as well, the rescaled energies are
\[E_{t,w}(y) = \fint_{\O_{t,w}} W(\D y \cdot g^{-1/2}_t)dV_{g_t}.\]
Unlike the Euclidean case, $g_t$ determines the energy scaling $\eps_t^2 =\inf E_{t,w}$, so it is \textit{intrinsic} to the problem.

\subsubsection{Elements of Surface Theory}\label{ss:surface-theory}
Let $r:\o\subseteq \mathbb{R}^2\to \mathcal{S}\subseteq \mathbb{R}^3$ be a coordinate map from a coordinate domain $\o$ onto a surface $\mathcal{S}$ in $\mathbb{R}^3$. Denote by $n$ the unit outward normal to $\mathcal{S}$. The \textit{first fundamental form} $\og_{ij} =\p_ir\cdot\p_j r$ measures distances on $\mathcal{S}$, and the \textit{second fundamental form} $\oII_{ij}=-\p_i r \cdot \p_j n$ measures how the normal changes on $S$ (\textit{bending}). $\og$ and $\oII$ are $(2,0)$-tensors on the tangent bundle $TS$, and they are related by the Gauss-Codazzi equations (G-C):
\begin{itemize}
	\item \textbf{Gauss:} The Gaussian curvature of $S$ is given by  $\overline{K} = \frac{\det \oII}{\det \og}$
	\item \textbf{Codazzi:} $\p_2\oII_{1k}-\p_1\oII_{2k}=\oII_{11}\Gamma_{k2}^1 +\oII_{12}(\Gamma^2_{k2}-\Gamma^1_{k1}) -\oII_{22} \Gamma_{k1}^2$.
\end{itemize}
The first and second fundamental forms characterize $S$ uniquely up to rigid motion:
\begin{theorem}
	If $\og$ and $\oII$ satisfy G-C and $\o$ is simply connected, then there exists a unique immersed surface (up to a rigid motion) $S$ with first fundamental form $\og$ and second fundamental form $\oII$.
\end{theorem}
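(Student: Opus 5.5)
The plan is to reduce the statement to the solvability of a linear first-order PDE system (the Gauss--Weingarten equations) for a moving frame, and to identify its integrability conditions with the Gauss--Codazzi equations. Throughout I assume $\og$ is a $C^2$ positive-definite symmetric tensor field and $\oII$ a $C^1$ symmetric tensor field on $\o$. I use the Christoffel symbols $\Gamma_{ij}^k$ of $\og$ and the shape operator $S^k_i=\og^{kl}\oII_{li}$. The unknown is the matrix-valued map $F=(F_1,F_2,F_3):\o\to\R^{3\times3}$, whose columns are meant to be $\p_1r,\p_2r,n$. It should satisfy
\[
\p_j F_i = \Gamma_{ij}^k F_k + \oII_{ij} F_3 \quad (i,j=1,2),\qquad \p_j F_3 = -S^k_j F_k .
\]
This can be written compactly as $\p_j F = F\,A_j$, where $A_1,A_2$ are $3\times 3$ matrix fields determined by $\og$ and $\oII$.

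\textbf{Step 1: integrability.} By the Frobenius theorem on a simply connected domain, the system $\p_jF=FA_j$ has, for every initial value $F(x_0)$, a unique solution if and only if $\p_2A_1-\p_1A_2 + [A_1,A_2]=0$. I would expand this compatibility condition entry by entry. The tangential--tangential block gives $R^l{}_{kij}=\oII_{kj}S^l_i-\oII_{ki}S^l_j$, which after contracting is exactly the Gauss equation $\overline K=\det\oII/\det\og$; this uses that in dimension two the curvature tensor has only one independent component. The normal components give $\p_2\oII_{1k}-\p_1\oII_{2k}=\ldots$, i.e.\ the Codazzi equations as stated. The remaining entries follow by symmetry. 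This computation is the main bookkeeping obstacle. In low regularity the Frobenius step is done either by solving ODEs along paths and showing path-independence via the compatibility condition, or in a weak formulation; I would use the path-ODE approach.

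\textbf{Step 2: the frame is adapted.} I choose $F(x_0)$ so that $F(x_0)^TF(x_0)=G(x_0)$, where $G=\mathrm{diag}(\og,1)$ is the block matrix with $\og$ in the upper-left $2\times2$ block and $1$ in the $(3,3)$ entry. I also require $\det F(x_0)>0$. Then $H=F^TF$ satisfies the linear system $\p_jH=A_j^TH+HA_j$. A direct check, using that $\Gamma$ is metric-compatible and that $\og S$ equals the symmetric matrix $\oII$, shows that $G$ also solves this system. By uniqueness, $H\equiv G$, and $\det F>0$ persists by continuity. Hence $F_3$ is a unit vector orthogonal to $F_1,F_2$, and $F_i\cdot F_j=\og_{ij}$.

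\textbf{Step 3: recovering the surface.} The 1-form $F_1dx^1+F_2dx^2$ is closed, because $\p_2F_1=\p_1F_2$ by the symmetry of $\Gamma_{12}^k$ and of $\oII_{12}$. Since $\o$ is simply connected, there is $r$ with $\p_ir=F_i$. Then $\p_ir\cdot\p_jr=\og_{ij}$, so $n=F_3$ is the unit normal, and $-\p_ir\cdot\p_jn=S^k_j\og_{ik}=\oII_{ij}$. This proves existence.

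\textbf{Step 4: uniqueness.} Let $\tilde r$ be another surface with the same $\og$ and $\oII$. There is a rigid motion $x\mapsto Qx+b$, with $Q\in\SO(3)$, mapping $\tilde r(x_0)$ to $r(x_0)$ and the frame of $\tilde r$ at $x_0$ to that of $r$; this is possible because both frames have Gram matrix $G(x_0)$ and positive orientation. Both frames then solve the same linear system with the same initial value, so they coincide everywhere by uniqueness in Step 1. Integrating $\p_ir$ with the same base point then gives $r=Q\tilde r+b$.
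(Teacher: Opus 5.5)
The paper gives no proof of this statement. It is recalled in the background section as the classical fundamental theorem of surface theory, so there is no argument in the paper to compare yours with.

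Your proposal is the standard proof, and it is correct under your assumptions $\og\in C^2$, $\oII\in C^1$. Its steps are:
\begin{itemize}
\item the Gauss--Weingarten system for the frame $(\p_1 r\mid\p_2 r\mid n)$;
\item Frobenius integrability on the simply connected $\o$;
\item propagation of $F^TF=\mathrm{diag}(\og,1)$ via uniqueness for the linear system $\p_jH=A_j^TH+HA_j$;
\item integration of the closed form $F_1\,dx^1+F_2\,dx^2$;
\item rigidity via uniqueness of the frame.
\end{itemize}

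Two small corrections are needed in the bookkeeping of Step 1.

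\textbf{Sign of the commutator.} Your system multiplies on the right, $\p_jF=FA_j$. Equality of mixed partials then gives
\[
F\bigl(\p_2A_1-\p_1A_2+A_2A_1-A_1A_2\bigr)=0,
\]
so the compatibility condition is $\p_2A_1-\p_1A_2=[A_1,A_2]$. The condition $\p_2A_1-\p_1A_2+[A_1,A_2]=0$ that you wrote is the one for the left-multiplied system $\p_jF=A_jF$. Expanding it would flip the sign of every product term, and you would not recover the Gauss and Codazzi equations.

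With the correct sign, and the convention $R(\p_j,\p_l)\p_i=R^k{}_{ijl}\p_k$, the tangential block reads $R^k{}_{i21}=S^k_2\oII_{i1}-S^k_1\oII_{i2}$. Its lowered form is antisymmetric in $(k,i)$ and reduces to $\overline{K}=\det\oII/\det\og$. The $(3,i)$ entries are exactly the Codazzi equations in the form stated in the paper.

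\textbf{The $(k,3)$ entries.} These do not follow by symmetry alone. They read $\nabla_1S^k_2-\nabla_2S^k_1=0$, which is Codazzi with an index raised, and their equivalence with the $(3,i)$ entries uses $\nabla\og=0$. Only the $(3,3)$ entry vanishes identically, by the symmetry of $\oII\,\og^{-1}\oII$.
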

\begin{definition}
	The \textit{shape operator} $\oS$ is the $(1,1)$-tensor attained from $\oII$ by raising an index:  $\og_{ik} \tensor{\oS}{^k_j} =\oII_{ij}$. It is characterized by 
	\[\og(\oS(v),w) = \oII(v,w),\]
	and it can be easily verified that $\oS(v)=-\D_v n$. 
\end{definition}

\begin{lemma} Let $(\O,g)$ be a Riemannian 3-manifold, and assume that $g$ is given locally w.r.t. a coordinate system $(x_1,x_2,x_3)$ by 
	\begin{equation}
	g(x',x_3)=\begin{pNiceArray}{cc|c}
	\Block{2-2}{\overline{g}(x')}& &0 \\ & &0\\\hline 0&0&1
	\end{pNiceArray}
	- 2 x_3\; \begin{pNiceArray}{cc|c}
	\Block{2-2}{A(x')}& &0 \\ & &0\\\hline 0&0&0
	\end{pNiceArray} + O(x_3^2),
	\label{metricdef}
	\end{equation}
	where $\overline{g},A$ are smooth fields of symmetric positive-definite $2\times 2$ matrices.	Then  $A$ is the second fundamental form of $\mathcal{S}=\{x_3=0\}\subseteq \O$, and $\og$ is the induced metric on $\mathcal{S}$.
\end{lemma}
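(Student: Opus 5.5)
The plan is to compute everything directly in the given coordinates, restricting to the surface $\mathcal{S}=\{x_3=0\}$, parametrized by $x'\mapsto(x',0)$. The induced metric is the restriction of $g$ to the span of $\p_1,\p_2$ at $x_3=0$. By \eqref{metricdef} this is exactly the upper-left block $\og(x')$, since the $O(x_3)$ and $O(x_3^2)$ terms vanish there. That settles the second claim immediately.

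For the second fundamental form, first identify the unit normal field along $\mathcal{S}$. At $x_3=0$ the off-diagonal blocks of $g$ vanish and $g_{33}=1$. Hence $N=\p_3$ is $g$-orthogonal to $\p_1,\p_2$ and has unit length, so it is a unit normal. In fact $\p_3$ is the unit normal along every slice to first order. Moreover, the curves $x_3\mapsto(x',x_3)$ are unit-speed $g$-geodesics normal to $\mathcal{S}$ to the relevant order: the Christoffel symbols $\Gamma^k_{33}$ involve $\p_3 g_{i3}$ and $\p_i g_{33}$, and these vanish at $x_3=0$ because the $(i,3)$ entries are $O(x_3^2)$.

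The key step is then the standard first-variation formula for the metric along normal coordinates. For tangent fields $\p_i,\p_j$ ($i,j\in\{1,2\}$) we have
\[
\p_3 g_{ij}\big|_{x_3=0} = g(\D_{\p_3}\p_i,\p_j)+g(\p_i,\D_{\p_3}\p_j) = g(\D_{\p_i}N,\p_j)+g(\p_i,\D_{\p_j}N),
\]
using that coordinate fields commute, so $\D_{\p_3}\p_i=\D_{\p_i}\p_3$. With the convention $\II(X,Y)=-g(X,\D_Y N)$ and the symmetry of $\II$, the right-hand side equals $-2\,\II_{ij}$. On the other hand, $\p_3 g_{ij}|_{x_3=0}=-2A_{ij}$ by \eqref{metricdef}. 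Comparing the two gives $\II=A$. Equivalently, one can compute $\Gamma^3_{ij}=-\tfrac12 g^{33}\p_3 g_{ij}=A_{ij}$ at $x_3=0$ (using $g^{33}=1$ and $g^{3k}=0$ there), and then $\II_{ij}=g(\D_{\p_i}\p_j,N)=\Gamma^3_{ij}$ with the appropriate sign convention.

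The only delicate point I expect is the sign and orientation convention. The surface-theory subsection defines $\oII_{ij}=-\p_i r\cdot\p_j n$ with an outward normal, and the orientation of $n=\p_3$ must be chosen consistently. With $N=\p_3$ and the convention $-g(\p_i,\D_{\p_j}N)$, the factor $-2x_3$ in \eqref{metricdef} produces exactly $+A$. I would check this on the model case of a sphere, or of a cylinder in Fermi coordinates, where $g=\og-2x_3\II+x_3^2\III$. The remaining steps are routine.
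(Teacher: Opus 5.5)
Your proposal is correct and is essentially the paper's argument. The paper identifies $\og$ as the induced metric and $\p_3$ as the unit normal exactly as you do, then computes $\II_{ij}=\Gamma^3_{ij}=-\tfrac12\p_3 g_{ij}=A_{ij}$ via the Christoffel formula, using $g^{3k}=\delta_{3k}$ and $\p_i g_{j3}=0$ on $\mathcal{S}$; this is your ``equivalently'' route, and your metric-compatibility/torsion-free identity $\p_3 g_{ij}|_{x_3=0}=-2\II_{ij}$ is the same computation in invariant form, while the remarks about normal geodesics are not needed.
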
	
\begin{proof} Consider the coordinate vector fields $(\p_1,\p_2,\p_3)$ corresponding to the given coordinates, which form a local frame for $T\O$.
	There are several things which we may readily read off from \eqref{metricdef}:
	\begin{itemize}
		\item[(i)] On $\mathcal{S}$, the vector field $\p_3$ is orthogonal (w.r.t. $g$) to both $\p_1$ and $\p_2$, and furthermore $|\p_3|_g=1$.
		\item[(ii)] Since $(x^1,x^2)$ is a coordinate system for $\mathcal{S}$ (by the definition of $\mathcal{S}$), the vector fields $\p_1$ and $\p_2$ are tangent to $\mathcal{S}$. In particular, in view of (i), the vector field $\p_3$ is the unit normal to $\mathcal{S}$.
		\item[(iii)] Restricting $g$ to $S$, we see that the induced metric on $\mathcal{S}$ is precisely $\overline{g}$.
	\end{itemize}
	Now, by definition, the second fundamental form of $\mathcal{S}$ is given
	by
	\[\II_{ij}=\II\left(\p_i,\p_j\right) = \inn{n, \D^g_{\p_i} \p_j}_g,\]
	where $n$ is the unit normal to $\mathcal{S}$ and $\D^g$ is the Levi-Civita connection on $(\O,g)$.
	  From (ii) we know that $n= \p_3$. Also, $\nabla^g_{\p_i}\p_j=\sum_{k=1}^3 \Gamma_{ij}^k \p_k$, where the Christoffel symbols $\Gamma_{ij}^k$ satisfy (on $\mathcal{S}$)
	\begin{equation}\Gamma_{ij}^k = \sum_{l=1}^3\frac12 g^{kl} \big(\p_i g_{jl}+\p_j g_{il} -\p_l g_{ij}\big).
	\label{christoffel}\end{equation}
	Since $\p_3\;\bot\; \p_1,\p_2$ on $\mathcal{S}$, and since $|\p_3|=1$ on $\mathcal{S}$, we have $\II_{ij}= \inn{\p_3,\Gamma_{ij}^3 \p_3}=\Gamma_{ij}^3$. We shall compute this Christoffel symbol using formula \eqref{christoffel}. Inverting $g$, we have
	\[
	g^{-1}(x',x_3)=
	\begin{pNiceArray}{cc|c}[margin,columns-width=0.3cm]
	\Block{2-2}{\overline{g}^{-1}(x')\,}& &0 \\ & &0\\\hline 0&0&1
	\end{pNiceArray}
	+  x_3 \begin{pNiceArray}{cc|c}[margin,columns-width=0.3cm]
	\Block{2-2}<\Large>{\ast}& &0 \\ & &0\\\hline 0&0&0
	\end{pNiceArray} + O(x_3^2),
	\]
	from which we deduce that $g^{31}=g^{32}=0$ and $g^{33}=1$.	Hence $\Gamma_{ij}^3 = \frac12 (\p_i g_{j3}+\p_j g_{i3} - \p_3 g_{ij})$. \\
	Now, directly from \eqref{metricdef} we see that $\p_ig_{j3}=\p_jg_{i3}=0$ for $1\leq i,j\leq 2$.\\Therefore, in a general point $p=(x',0)\in S$, the second fundamental form is given by
	\[\II_{ij}(p)=\Gamma_{ij}^3(p) = -\frac12 \p_3\big{|}_{p} g_{ij} = -\frac12\cdot \left(-2 A_{ij}(x')\right) = A_{ij}(x'),\]
	so $\II(x')=A(x')$ as requested. 
\end{proof}

\subsubsection{The Quadratic Form $Q_3$}\label{ss:quadratic-forms}
Recall that the standard inner product on matrices in $\mathbb{R}^{n\times n}$ (also known as the \textit{Frobenius inner product}) is given by
\[\inn{A:B} = \Tr(A^TB) = \sum_{i,j}A_{ij}B_{ij},\]
where, throughout this section, we omit the range of summation when there is no room for confusion.
Given any linear map $\mathbb{M}:\mathbb{R}^{n\times n}\to \mathbb{R}^{n\times n}$, we may write its components with respect to the standard basis by $\mathbb{M}_{ijkl}$ (in other words, any such $\mathbb{M}$ is a fourth-order tensor). We say that a linear map $\mathbb{M}:\mathbb{R}^{n\times n}\to \mathbb{R}^{n\times n}$ is \textit{symmetric} if $\inn{\mathbb{M}A:B}=\inn{A:\mathbb{M}B}$ for all $A,B\in \mathbb{R}^{n\times n}$; this is equivalent to requiring that $\mathbb{M}_{ijkl}=\mathbb{M}_{klij}$.
\begin{definition}
	A function $Q: \mathbb{R}^{n\times n}\to \mathbb{R}$ is called a \textit{quadratic form} if there exists a symmetric linear map $\mathbb{M}:\mathbb{R}^{n\times n}\to \mathbb{R}^{n\times n}$ such that for all $A\in \mathbb{R}^{n\times n}$,
	\[Q(A) = \inn{A: \mathbb{M}A} = \sum_{i,j,k,l} \mathbb{M}_{ijkl}A_{ij}A_{kl}.\]
\end{definition}
\begin{remark} A quadratic form $Q$ has a unique symmetric linear map $\mathbb{M}$ defining it.\end{remark}
\begin{remark} Any quadratic form $Q$ uniquely determines a symmetric bilinear form $b$ on $\mathbb{R}^{n\times n}\times \mathbb{R}^{n\times n}$ by polarization:
	\[b(A,B) = \frac12\bra{Q(A+B)-Q(A)-Q(B)}.\]
	Conversely, any symmetric bilinear form $b$ determines a quadratic from $Q(A)=b(A,A)$. We say that such $b$ and $Q$ are \textit{associated} with each other. We will henceforth refer both to a quadratic form $Q$ and to its associated symmetric bilinear form $b$ simply as \textit{quadratic forms}, and by abuse of notation we will write $Q(A,B)$ for $b(A,B)$.
\end{remark}
\begin{definition}
	A quadratic form $Q$ on $\mathbb{R}^{n\times n}$ is \textit{positive-definite} if $Q(A) >0$ for all $A\neq 0$. We similarly define \textit{non-negative definite} quadratic forms.
\end{definition}
\begin{lemma} Let $W:\mathbb{R}^{n \times n}\to \mathbb{R}$ be any map of class $C^2$ in a neighbourhood of $I$.\\ Then $Q(F)\coloneqq \frac{\p^2 W}{\p F^2}(I)(F,F)$ is a quadratic form. 
\end{lemma}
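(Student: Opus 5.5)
The approach is to use the fact that $W$ is $C^2$ near $I$ to get its second derivative as a bilinear form, and then apply the definition of a quadratic form. Since $W$ is $C^2$ in a neighbourhood of $I$, its second Fréchet derivative $D^2W(I)$ at $I$ exists. It is a bilinear map $\mathbb{R}^{n\times n}\times\mathbb{R}^{n\times n}\to\mathbb{R}$. In the standard basis $\{E_{ij}\}$ of $\mathbb{R}^{n\times n}$ it has components
\[
\mathbb{M}_{ijkl}\coloneqq \frac{\p^2 W}{\p F_{ij}\,\p F_{kl}}(I),
\]
so that, by bilinearity, for every $F$
\[
Q(F)=D^2W(I)(F,F)=\sum_{i,j,k,l}\mathbb{M}_{ijkl}F_{ij}F_{kl}.
\]

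Next, $\mathbb{M}$ defines a linear map $\mathbb{R}^{n\times n}\to\mathbb{R}^{n\times n}$ by $(\mathbb{M}A)_{ij}=\sum_{k,l}\mathbb{M}_{ijkl}A_{kl}$. With the Frobenius inner product this gives
\[
\inn{A:\mathbb{M}A}=\sum_{i,j,k,l}\mathbb{M}_{ijkl}A_{ij}A_{kl}=Q(A).
\]
It remains to check that $\mathbb{M}$ is symmetric, i.e.\ $\mathbb{M}_{ijkl}=\mathbb{M}_{klij}$. This is exactly the equality of mixed second partial derivatives (Schwarz--Clairaut), which holds because $W$ is $C^2$ near $I$. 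Equivalently, the second Fréchet derivative of a $C^2$ map is a symmetric bilinear form. Hence $Q$ satisfies the definition of a quadratic form.

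There is essentially no obstacle. The only point needing care is the symmetry of $\mathbb{M}$, which is where the $C^2$ hypothesis on a neighbourhood of $I$ is used, rather than mere twice-differentiability at the single point $I$. One could also do without the symmetry argument: for any linear map $\mathbb{M}$, replacing it by its symmetrization $\frac12(\mathbb{M}+\mathbb{M}^T)$ leaves $\inn{A:\mathbb{M}A}$ unchanged. The Schwarz argument simply shows this replacement is unnecessary.
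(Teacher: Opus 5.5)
Your proposal is correct and matches the paper's proof: expand $Q(F)=\sum_{i,j,k,l}\mathbb{M}_{ijkl}F_{ij}F_{kl}$ with $\mathbb{M}_{ijkl}=\frac{\p^2 W}{\p F_{ij}\p F_{kl}}(I)$, rewrite it as $\inn{F:\mathbb{M}F}$, and get the symmetry of $\mathbb{M}$ from Clairaut's theorem. Your extra remark is also valid: replacing $\mathbb{M}$ by its symmetrization would leave $Q$ unchanged and make the Clairaut step unnecessary.
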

\begin{proof} By definition of $Q$, we have
	\begin{align*}
	Q(F) = \sum_{i,j,k,l} \frac{\p^2 W}{\p F_{ij}\p F_{kl}}(I) F_{ij}F_{kl} = \inn{F:\mathbb{M}F},
	\end{align*}
	where $\mathbb{M}_{ijkl}\coloneqq\frac{\p^2 W_n}{\p F_{ij}\p F_{kl}}(I)$ is symmetric by Clairaut's theorem.
\end{proof}

Consider now our elastic energy density $W$.
Since $W$ attains a local minimum at each rotation in $\SO(3)$, and particularly at $I$, the first derivative of $W$ at $I$ vanishes identically. By Taylor expansion, this means that for matrices $F$ at the vicinity of $I$, the value of  $W(F)$ is dictated by the second derivative of $W$ at $I$, or equivalently by the quadratic form 
\begin{equation}\label{eq:Q3-def}
Q_3(F)\coloneqq\frac{\p^2 W}{\p F^2}(I)(F,F).
\end{equation} More precisely, the assumptions on $W$ imply that we can write $W(I+F)=\frac12 Q_3(F) + \eta(F)$, where $\eta(F)/|F|^2\to 0$ as $|F|\to 0$. Letting $\o(s)=\sup_{|F|\leq s}\abs{\eta(F)}$, we have
\begin{equation}\label{eq:W-expansion}
\frac12 Q_3(F) -\o(|F|)\leq W(I+F)\leq \frac12 Q_3(F) +\o(|F|),
\end{equation}
where $\o(s)/s^2\to 0$ as $s\to 0$. 
\begin{lemma}\label{lem:Q3-is-quadratic}
	The quadratic form $Q_3$ is  non-negative definite, and its value depends only on the symmetric part of its argument. On the space of symmetric matrices, $Q_3$ is positive-definite. \end{lemma}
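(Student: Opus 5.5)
We prove the three assertions of Lemma~\ref{lem:Q3-is-quadratic} in order, using only the properties (i)--(iv) of $W$ and the expansion \eqref{eq:W-expansion}.

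\emph{Non-negativity.} Fix $F$ and set $s\to 0^+$. Since $W\geq 0$, \eqref{eq:W-expansion} gives
\[
0\leq W(I+sF)\leq \tfrac12 s^2 Q_3(F)+\o(s|F|).
\]
Dividing by $s^2$ and letting $s\to0$ yields $Q_3(F)\geq 0$.

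\emph{Dependence only on the symmetric part.} We use frame indifference (iv) along a curve of rotations. Let $A$ be skew-symmetric and put $R(s)=e^{sA}\in\SO(3)$. For any $F$, the function $W(R(s)(I+sF))$ equals $W(I+sF)$, by (iv). On the other hand,
\[
R(s)(I+sF)=I+s(A+F)+O(s^2).
\]
Expanding both sides to second order via \eqref{eq:W-expansion} gives $Q_3(F)=Q_3(F+A+O(s))$ as $s\to0$, and by continuity $Q_3(F+A)=Q_3(F)$.

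The cleaner route is to argue at the level of the bilinear form. Taking $F=0$ in the identity above shows $W(e^{sA})=0$, so $Q_3(A)=0$ for every skew $A$. Since $Q_3\geq0$ is a non-negative quadratic form, Cauchy--Schwarz for the associated bilinear form gives $|Q_3(A,B)|\leq Q_3(A)^{1/2}Q_3(B)^{1/2}=0$. Hence skew matrices lie in the kernel of the bilinear form, and $Q_3(F)=Q_3(\sym F)$.

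\emph{Positive-definiteness on symmetric matrices.} This uses coercivity (iii) and is the step requiring care. For symmetric $S$ we have $\dist(I+sS,\SO(3))\geq c\,s|S|$ for small $s$. To see this, note that by polar decomposition the closest rotation to a matrix $M$ with positive determinant is its polar factor. For $M=I+sS$ with $s$ small, $M$ is symmetric positive definite, so its polar factor is $I$, and $\dist=|sS|$.

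Then (iii) and \eqref{eq:W-expansion} give
\[
C s^2|S|^2\leq \tfrac12 s^2 Q_3(S)+\o(s|S|).
\]
Dividing by $s^2$ and letting $s\to 0$ yields $Q_3(S)\geq 2C|S|^2>0$ for $S\neq 0$.
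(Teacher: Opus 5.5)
Your proof is correct, but it is organized differently from the paper's.

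The paper gets non-negativity and positive-definiteness in one stroke. It uses $\dist(M,\SO(3))=\abs{\sqrt{M^TM}-I}$ together with the expansion $\sqrt{(I+tF)^T(I+tF)}=I+t\Sym F+O(t^2)$. Coercivity then gives $Q_3(F)\geq 2C\abs{\Sym F}^2$ for \emph{every} $F$, which is \eqref{eq:Q3-coercivity}. Skew-invariance is proved separately via frame indifference: $I+tF$ is replaced by its stretch $\sqrt{(I+tF)^T(I+tF)}$ and Taylor expanded.

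You instead take non-negativity directly from $W\geq 0$. You prove definiteness only for symmetric $S$, where $I+sS$ is its own stretch, so the distance is exactly $s\abs{S}$ and no square-root expansion is needed. Skew-invariance is handled on its own.

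Your first route for skew-invariance, left-multiplying by $e^{sA}$, is essentially the paper's mechanism, since the polar rotation of $I+tF$ is $I+t(F-\Sym F)+O(t^2)$. It is already a complete argument, not just a heuristic.

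Your second route is genuinely different. $W(e^{sA})=0$ follows from the energy-well property (ii) alone, so $Q_3$ vanishes on the skew matrices, which form the tangent space of $\SO(3)$ at $I$. Non-negativity plus Cauchy--Schwarz then puts them in the kernel of the bilinear form. This shows that skew-invariance needs only $W\geq 0$, $W|_{\SO(3)}=0$ and $C^2$ regularity at $I$, not frame indifference.

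In exchange, the paper's route gives \eqref{eq:Q3-coercivity} for general, non-symmetric $F$ directly. That is the form used later, e.g.\ in Lemma~\ref{lem:q2-is-quadratic}; you recover it only by combining your last two steps.
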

\begin{proof}
	Let $0\neq F\in \mathbb{R}^{3\times 3}$. Let $t$ be sufficiently small, so that both $\det(I+tF)$ is positive and $(I+t \Sym F)$ lies in a neighbourhood of $I$ in which $W$ is of class $C^2$. A simple calculation shows that $\dist(A,\SO(3))=|\sqrt{A^TA}-I|$ for $A$ with positive determinant. Since $(I+tF)^T(I+tF)=I+2t\Sym F +t^2 F^TF$, we have $\sqrt{(I+tF)^T(I+tF)} = I+t \Sym F +O(t^2)$. Therefore,
	$\dist^2(I+t F,\SO(3))=\abs{t \Sym F}^2+o(t^2)$. By coercivity of $W$ we have 
	$C\dist^2(I+tF,\SO(3))\leq W(I+t F)$ for some positive constant $C$, from which we deduce that
	\[C \abs{\Sym F}^2 \leq \liminf_{t\to 0} \frac{1}{t^2} W(I+t F).\]
	By \eqref{eq:W-expansion}, it is immediately seen that $\lim_{t\to0}t^{-2}W(I+tF)=\frac12 Q_3(F)$, so we get 
	\begin{equation}\label{eq:Q3-coercivity}
	Q_3(F)\geq 2 C |\Sym F|^2.
	\end{equation}
	This proves the claims about the definiteness of $Q_3$.
	Now, by frame indifference of $W$ and polar decomposition, we can write 
	$W(I+t F) = W(\sqrt{(I+tF)^T(I+tF)}) = W(I+t \Sym F + O(t^2))$. Employing \eqref{eq:W-expansion} again, we have
	\begin{align*}
	\frac12 t^2 Q_3(\Sym F + O(t)) - \o\bra{|t\Sym F|+O(t^2)}&\leq W(I+t F) 
	\\&\leq \frac12 t^2 Q_3(\Sym F + O(t)) + \o\bra{|t\Sym F|+O(t^2)}.
	\end{align*}
	Dividing by $t^2$, letting $t\to0$, and recalling that $Q_3$ is continuous (since $W$ is $C^2$ near the identity), we get 
	\[\frac12 Q_3(\Sym F) = \lim_{t\to 0} W(I+t F) = \frac12 Q_3(F),\]
	which proves that $Q_3$ depends only on the symmetric part of its argument.
 An alternative proof for Lemma \ref{lem:Q3-is-quadratic} can be found in \cite[Lemma~5.2]{lewicka-prestressed}.
\end{proof}

The quadratic form $Q_3$ gives rise to several downstream quadratic forms. For instance,  we may define $q_2: \mathbb{R}^2\to \mathbb{R}$ by
\begin{equation}\label{eq:q2-def}
q_2(\alpha,\beta) = \min_{z\in \mathbb{R}^4} Q_3\bra{\begin{matrix}\alpha & \beta & z_1\\ \beta& z_2 & z_3\\z_1 & z_3 & z_4\end{matrix}}.
\end{equation}
\begin{lemma}\label{lem:q2-is-quadratic} $q_2$ is a positive-definite quadratic form on $\mathbb{R}$.
\end{lemma}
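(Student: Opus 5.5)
The plan is to show that $q_2$ is the minimum of a positive-definite quadratic form over an affine family. This is a standard Schur-complement argument.

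Write a symmetric matrix $M(\alpha,\beta,z)$ as in \eqref{eq:q2-def}. The map $(\alpha,\beta,z)\mapsto M(\alpha,\beta,z)$ is a linear isomorphism from $\R^6$ onto $\R^{3\times3}_\sym$. By Lemma~\ref{lem:Q3-is-quadratic}, $Q_3$ restricted to $\R^{3\times 3}_\sym$ is a positive-definite quadratic form. Pulling it back, we get a positive-definite quadratic form on $\R^2\times\R^4$ in the variables $(a,z)$, $a=(\alpha,\beta)$. In block form it reads
\[
\Phi(a,z)=\inn{\mathcal{A}a,a}+2\inn{\mathcal{B}a,z}+\inn{\mathcal{C}z,z},
\]
with symmetric positive-definite blocks $\mathcal{A}\in\R^{2\times2}$ and $\mathcal{C}\in\R^{4\times4}$. (Positivity of the diagonal blocks follows from positivity of $\Phi$ restricted to the coordinate subspaces.)

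For fixed $a$, the function $z\mapsto\Phi(a,z)$ is a strictly convex quadratic, since $\mathcal{C}>0$. Hence the minimum in \eqref{eq:q2-def} is attained at the unique critical point $z^*(a)=-\mathcal{C}^{-1}\mathcal{B}a$, which depends linearly on $a$. Substituting gives
\[
q_2(a)=\inn{(\mathcal{A}-\mathcal{B}^T\mathcal{C}^{-1}\mathcal{B})a,a}.
\]
This is a quadratic form on $\R^2$ (the statement presumably means $\R^2$, identified with symmetric $2\times2$ matrices with vanishing $(2,2)$ entry), with symmetric matrix given by the Schur complement.

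For positive-definiteness, take $a\neq0$. Then $q_2(a)=\Phi(a,z^*(a))=Q_3(M(a,z^*(a)))$, and $M(a,z^*(a))\neq0$ because its $(1,1)$ or $(1,2)$ entry is nonzero. So $q_2(a)>0$. For a quantitative bound, use \eqref{eq:Q3-coercivity}: $q_2(a)\geq 2C\min_z|M(a,z)|^2\geq 2C(\alpha^2+2\beta^2)$.

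There is no real obstacle here. The only point needing care is that the minimum is actually attained, so that "min" in \eqref{eq:q2-def} is justified, and that the minimizer is linear in $a$. Both follow from the positive-definiteness of the block $\mathcal{C}$.
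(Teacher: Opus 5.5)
Your proof is correct and follows essentially the same route as the paper. Both arguments show that the minimizer over $z$ exists, is unique, and depends linearly on the remaining variables via the first-order condition, and both inherit definiteness from $Q_3$ on symmetric matrices. Your Schur-complement formulation is just a cleaner matrix version of the paper's coefficient computation (invertibility of $\mathcal{C}$ replaces its Bolzano--Weierstrass existence step), and working on all of $\R^2$ in $(\alpha,\beta)$ also covers the paper's restriction to $q_2(\alpha,\alpha)$.
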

\begin{proof} Since $Q_3$ is a quadratic form, for each $\alpha\in \mathbb{R}$ and $z\in \mathbb{R}^4$ we can write
	\begin{equation}
	\label{eq:q2-Q3-expansion}
	Q_3(\alpha,z)\coloneqq Q_3\bra{\begin{matrix}\alpha & \alpha & z_1\\ \alpha& z_2 & z_3\\z_1 & z_3 & z_4\end{matrix}} = c\alpha^2+\sum_{i=1}^4 c_i \alpha z_i + \sum_{i,j=1}^4 c_{ij} z_i z_j
	\end{equation}
	for some constants $c,c_i,c_{ij}$. Notice that for  all $z\in \mathbb{R}^4$, by coercivity of $Q_3$ (equation \eqref{eq:Q3-coercivity}), we have
	\[Q_3(\alpha,z)\geq C (3|\alpha|^2+|z|^2+|z_1|^2+|z_3|^2)\geq C |z|^2\]
	for some positive constant $C$. Therefore, by the Bolzano-Weierstrass theorem and the continuity of $Q_3$, if $(z^{(k)})\subseteq \mathbb{R}^4$ satisfies $Q_3(\alpha,z^{(k)})\to \inf_{z} Q_3(\alpha ,z)$, then (up to subsequences) $z^{(k)}$ converges to some $\tilde{z}\in \mathbb{R}^4$ in which the infimum is attained. Since $Q_3$ is strictly convex on symmetric matrices (because it is positive-definite there), this minimum is in fact unique. We may therefore let $\tilde{z}(\alpha)=\{\tilde{z}_i
	(\alpha)\}_{i=1}^4$ be the (unique) solution to the minimum problem \eqref{eq:q2-def} for $q_2(\alpha) \coloneqq q_2(\alpha,\alpha)$. 
	If we manage to prove that $\tilde{z}(\alpha)$ depends linearly on $\alpha$, then \eqref{eq:q2-Q3-expansion} will imply at once that $q_2$ is indeed a quadratic form. Let $z\in \mathbb{R}^4$ be arbitrary. Since the function $t\mapsto Q_3(\alpha,\tilde{z}+t z)$ attains its minimum at $t=0$, we get
	\begin{align*}
	0 = \frac{d}{dt}{\bigg|}_{t=0} Q_3(\alpha,\tilde{z}+t z) &= \sum_i c_i \alpha z_i + \sum_{i,j} c_{ij}(\tilde{z}_i z_j + \tilde{z}_j z_i)
	\\&=\sum_i \bra{c_i \alpha+ \sum_j (c_{ij}+c_{ji})\tilde{z}_j}z_i
	\end{align*}
	Since this holds  for all $z\in \mathbb{R}^4$, this implies that 
	$0=c_i \alpha+ \sum_j (c_{ij}+c_{ji})\tilde{z}_j$, from which we deduce that $\tilde{z}$ is linear in $\alpha$. Conversely, every stationary point of the functional $z\mapsto Q_3(\alpha,z)$ is a minimizer, because the functional is strictly convex as explained above. Since $Q_3$ is positive-definite on symmetric matrices, this immediately implies that $q_2$ is positive-definite.
\end{proof}
Similarly, we may define 
$Q_2:\mathbb{R}^{2\times 2}\to \mathbb{R}$ by
\begin{equation}\label{eq:bigQ2-def}
Q_2(F) = \min_{z\in \mathbb{R}^3} Q_3 \begin{pNiceArray}{cc|c}[margin,columns-width=0.3cm]
\Block{2-2}{F}& &z_1 \\ & &z_2\\\hline 0&0&z_3
\end{pNiceArray}.
\end{equation}
\begin{lemma}\label{lem:bigQ2-is-quadratic}
	$Q_2$ is a quadratic form on $\mathbb{R}^{2\times 2}$ which vanishes on skew-symmetric matrices and is positive-definite on symmetric matrices.
\end{lemma}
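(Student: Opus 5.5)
I will follow the same scheme as the proof of Lemma~\ref{lem:q2-is-quadratic}. For fixed $F\in\R^{2\times 2}$, write $M(F,z)$ for the $3\times 3$ matrix in \eqref{eq:bigQ2-def}. Since $Q_3$ is a quadratic form, $z\mapsto Q_3(M(F,z))$ is a quadratic polynomial in $z\in\R^3$: it has a purely quadratic part in $F$, a part bilinear in $(F,z)$, and a purely quadratic part in $z$.

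The first step is existence and uniqueness of the minimizer. By Lemma~\ref{lem:Q3-is-quadratic}, $Q_3(M(F,z))=Q_3(\Sym M(F,z))$. The symmetric part of $M(F,z)$ has $(3,3)$-entry $z_3$ and off-diagonal $(i,3)$-entries $z_i/2$ for $i=1,2$. Hence \eqref{eq:Q3-coercivity} gives
\[
Q_3(M(F,z))\geq 2C|\Sym M(F,z)|^2\geq C'|z|^2 .
\]
So the purely quadratic part in $z$, namely $Q_3(M(0,z))$, is positive-definite. Consequently the function $z\mapsto Q_3(M(F,z))$ is coercive and strictly convex, and it has a unique minimizer $\tilde z(F)$. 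I will characterize $\tilde z(F)$ by the first-order condition, which is a linear system
\[
A\tilde z=-L(F),
\]
where $A$ is the invertible symmetric matrix of the $z$-quadratic part and $L$ is linear in $F$. Therefore $\tilde z(F)=-A^{-1}L(F)$ is linear in $F$. Substituting back, $Q_2(F)=Q_3(M(F,\tilde z(F)))$ is the composition of a quadratic form with a linear map, hence a quadratic form on $\R^{2\times 2}$.

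For the vanishing on skew matrices: if $F$ is skew, take $z=0$. Then $M(F,0)$ is skew, so $Q_3(M(F,0))=Q_3(\Sym M(F,0))=0$. Since $Q_3\geq 0$, this gives $Q_2(F)=0$. More generally, $Q_2(F)$ depends only on $\Sym F$, because $\Sym M(F,z)$ depends only on $\Sym F$ and $z$.

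For positive-definiteness on symmetric matrices: coercivity gives
\[
Q_2(F)\geq 2C\min_z|\Sym M(F,z)|^2\geq 2C|\Sym F|^2,
\]
since the upper-left $2\times2$ block of $\Sym M(F,z)$ is exactly $\Sym F$. This is positive for nonzero symmetric $F$.

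The only mildly delicate step is the invertibility of $A$, that is, strict convexity in $z$ despite the non-symmetric placement of $z$ in $M(F,z)$. This is handled by the lower bound $Q_3(M(0,z))\geq C'|z|^2$ above, which uses only that $\Sym M(0,z)$ controls $|z|$.
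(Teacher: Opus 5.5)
Your proposal is correct and follows essentially the same route as the paper: coercivity in $z$ gives a unique minimizer, the first-order condition shows that $\tilde z(F)$ is linear in $F$, and hence $Q_2$ is a quadratic form. Your version is slightly more careful than the paper's. You explicitly justify strict convexity in $z$ through $\Sym M(0,z)$, which matters because the matrix in \eqref{eq:bigQ2-def} is not symmetric. You also give explicit arguments for the vanishing of $Q_2$ on skew-symmetric $F$ and for the bound $Q_2(F)\geq 2C|\Sym F|^2$, which the paper only asserts.
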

\begin{proof}
	The proof is similar to the proof of Lemma \ref{lem:q2-is-quadratic}.  Since $Q_3$ is a quadratic form, for each $F\in \mathbb{R}^{2\times 2}$ and $z\in \mathbb{R}^3$ we can write
	\[Q_3(F,z)\coloneqq Q_3 \begin{pNiceArray}{cc|c}[margin,columns-width=0.3cm]
	\Block{2-2}{F}& &z_1 \\ & &z_2\\\hline 0&0&z_3
	\end{pNiceArray}= Q(F)+\sum c_{ij}z_i z_j + \sum c_{ijk} F_{ij} z_k,\]
	where $Q$ is some quadratic form on $\mathbb{R}^{2\times 2}$ and $c_{ij},c_{ijk}$ are some constants. For $F\in \mathbb{R}^{2\times 2}$, let $\tilde{z}(F)$ be the solution for the minimum problem \eqref{eq:bigQ2-def} for $Q_2(F)$ (where the proof of the existence of a unique minimizer is the same as in the proof of Lemma \ref{lem:q2-is-quadratic}). We only need to prove that $\tilde{z}$ depends linearly on $F$. Since $\tilde{z}$ is a minimizer, for any $z\in \mathbb{R}^3$ we have
	\begin{align*}
	0 =\frac{d}{dt}{\bigg|}_{t=0}  Q_3(F,\,\tilde{z}+t z) &= \sum c_{ij}(\tilde{z}_iz_j+z_i\tilde{z}_j) +\sum c_{ijk}F_{ij} z_k 
	\\&=\sum_i \bra{\sum_j(c_{ij}+c_{ji})\tilde{z}_j+\sum_{j,k}c_{k,j,i}F_{kj}}z_i
	\end{align*}
	Since this holds for all $z\in \mathbb{R}^3$, we conclude that
	$0=\sum_j(c_{ij}+c_{ji})\tilde{z}_j+\sum_{j,k}c_{k,j,i}F_{kj}$, proving that $\tilde{z}$ depends linearly on $F$. Conversely, every stationary point of the functional $z\mapsto Q_3(F,z)$ is a minimizer, because the functional is convex (indeed, $Q_3$ is non-negative definite, hence convex). Moreover, the definiteness of $Q_3$ immediately implies the required definiteness of $Q_2$.
\end{proof}

\newpage

\section{Settings and Problem Setup}\label{sec:setting}
Throughout this work, $W:\mathbb{R}^{3\times 3} \to [0,\infty]$ denotes an elastic energy density satisfying conditions (i)-(iv) as in \S~\ref{ss:hyperelastic}.
The induced quadratic forms $Q_3$ and $Q_2$ are defined as in 
\eqref{eq:Q3-def} and \eqref{eq:bigQ2-def}; see \S\ref{sec:background} 
for further background.

\subsection{Problem Setup} 
Consider a family of thin bodies $\O_{t,w} = (0,L)\times \left(-\frac{w}{2},\frac{w}{2}\right)\times \left(-\frac{t}{2},\frac{t}{2}\right)$, indexed over small enough $t,w$, where each $\O_{t,w}$ is equipped with a Riemannian metric $g_{t,w}$. 
We assume that the metric $g_{t,w}$ is given by 
\begin{equation}\label{eq:g_tw}
g_{t,w}(z_1,z_2,z_3) = I -2 t B(z_1,z_2/w,z_3/t)
\end{equation}
for some $B\in L^\infty\left(\left(0,L\right)\times \left(-\frac12,\frac12\right)\times\left(-\frac{1}{2},\frac{1}{2}\right); \mathbb{R}^{3\times 3}_{\sym+}\right)$.
We note that if $g_{t,w}$ is a restriction of a smooth metric $g$ on $\O_{t_0,w_0}$ to $\O_{t,w}$, for which the ribbon is a $t$-tubular
	neighbourhood of the mid-surface $\{z_3 = 0\}$, which, in turn, is a $w$-tubular neighbourhood of the
	midline $\ell = \{z_2 = z_3 = 0\}$, the metric would have had the form
\[
g(z)=\begin{pmatrix}(1-\kappa(z_1)z_2)^2-K(z_1,0)z_2^2+O(z_2^3)&0&0\\0&1&0\\0&0&1\end{pmatrix}-2 z_3 \begin{pmatrix}\II(z_1,z_2)&0\\0&0\end{pmatrix}+O(z_3^2),
\]
where $\kappa$ is the geodesic curvature of the midline within the midsurface, $K$ is the Gaussian curvature of the midsurface, and $\II$ is the second fundamental form of the midsurface in $\O_{t_0,w_0}$ 
(cf. \cite[\S~2]{mm25}). 
Therefore, heuristically, we can understand \eqref{eq:g_tw} as describing a ribbon whose midline is asymptotically straight, in the sense that the geodesic curvature is $\kappa_g=O(t/w)$, and that the $z_1,z_2,z_3$-directions are near-orthogonal (up to an error of order $O(t)$). 
The second fundamental form can still be of order $O(1)$.
For each $(t,w)$, let $P_{t,w}:T\O_{t,w} \to \mathbb{R}^3$ be a prestrain compatible with $g_{t,w}$.
We are interested in the limiting elastic energy (in the sense of $\Gamma$-convergence, and after an appropriate scaling) as $t,w\to 0$, where we assume that the width and thickness satisfy 
\[
t \ll w
\] 
along their way to zero.
It is convenient to perform the analysis on a fixed domain, so let us rescale the problem to $\O\coloneqq \O_{1,1}$ via the change of coordinates $(z_1,z_2,z_3)\mapsto (x_1,w x_2, tx_3)$. 
Given a configuration $y=y(x): \O \to \mathbb{R}^3$ on the rescaled domain, we define the \textit{scaled deformation gradient},
\[ \D_{t,w} y \coloneqq \left(y_{,1}\mid \frac{1}{w} y_{,2}\mid \frac{1}{t}y_{,3}\right).\]

 Let us be given now a family $v^{t,w}$ of configurations, 
  each respectively defined on $\O_{t,w}$. Denoting $y^{t,w}(x)=v^{t,w}(x_1,wx_2,tx_3)$, we obtain a family of configurations on $\O$, such that
$\D_{t,w} y^{t,w}(x)=\D v^{t,w}(z)  $. The metric $g_{t,w}$ is given locally with respect to the new coordinates $(x_1,x_2,x_3)$ by \[g_{t,w}(x) =I -2t B(x_1,x_2,x_3),\] and the redefined elastic energy of a configuration $y:\O \to \mathbb{R}^3$ is given by the integral
\[\int_\O W(\D_{t,w} y(x) P_{t,w}^{-1}(x))\cdot \left|\det g_{t,w}(x)\right|^{1/2}dx.\]

 In this work, we assume for simplicity that $P^{-1}_{t,w}(x) = g_{t,w}^{-1/2}(x)$ in coordinates.\footnote{The general behaviour
 	  of the problem is not affected by this choice. In fact, because any two plastic strains compatible with $g_{t,w}$ differ by a rotation-field, for isotropic $W$ (i.e., right-$\SO(3)$-invariant) all $g_{t,w}$-compatible plastic strains result in the same energy.}
 Expanding, we find
\[g_{t,w}^{-1/2}(x)=I+ t B(x) + O(t^2),\qquad \quad \abs{\det g_{t,w}(x)}^{1/2}=1- t\Tr(B(x))+O(t^2).\]
Because our focus is limited to the Kirchhoff bending regime $\eps_{t,w}\sim t^2$, we may omit higher order $t$-terms; we will henceforth work with the simplified elastic energy
\begin{equation}
I^{t,w}(y) \coloneqq \int_\O W\bra{\D_{t,w} y(x) \bra{I+ t B(x)}}dx.
\end{equation}

\subsection{Notation}
We write $\check{F}$ for the leading $2\times 2$ submatrix of a matrix $F\in \R^{3\times 3}$. We write $x=(x_1,x_2,x_3)$, and abbreviate $x'=(x_1,x_2)$. The midsurface of $\O$ is denoted by $S\coloneqq (0,L)\times (-\frac12,\frac12)$, so that $\O=S\times \bra{-\frac12,\frac12}$.
For any $y\in W^{1,2}(\O;\R^3)$, we denote by $y_{,i}$ or $\p_i y$ the column vector of the partial derivatives of $y$ with respect to $x_i$, $i=1,2,3$. 
For a matrix-field $f:\O\to \R^{n\times n}$, 
 we write $f_{ij}=e_i\cdot fe_j$ for the $(i,j)$-component function of $f$. 
Given a quadratic form $Q(F)$ on $\R^{n\times n}$, we write $Q(F,G)$ for the associated bilinear form, and $Q(F)=\sum_{i,j,k,l} \mathbb{M}_{ijkl}F_{ij}F_{kl}$ where $\mathbb{M}$ is the associated symmetric fourth-order tensor.

\newpage
\section{Technical Preliminaries}\label{sec:preliminaries}

\subsection{Preferred Bending and Excess Energy}\label{sec:excess-energy}
Consider the Hilbert space $\mathcal{H}_3= L^2(\O;\mathbb{R}^{2\times 2}_{\sym})$ with the inner product 
$\inn{f,g}=\int_\O Q_2(f,g)dx$
($Q_2$ is a positive-definite quadratic form on symmetric matrices, so this is indeed an inner product). Similarly, consider the Hilbert space $\mathcal{H}_2 = L^2(S;\mathbb{R}^{2\times 2}_{\sym})$  with the inner product $\inn{f,g}=\int_S Q_2(f,g)dx'$.
Let $\P_3:\mathcal{H}_3\to \mathcal{M}_3$ be the orthogonal projection onto the subspace 
\[
\mathcal{M}_3 = \set{
	x\mapsto 
	\begin{pmatrix}\gamma_1(x_1)+x_2\g_2(x_1)&a_{12}(x')\\a_{12}(x')&a_{22}(x')\end{pmatrix} +x_3 F(x') \;:\; 
	\g_i \in L^2((0,L)), a_{ij}\in L^2(S), F\in L^2(S;\R^{2\times 2}_\sym)
},
\]
and let $\P_2:\mathcal{H}_2\to \mathcal{M}_2$ be the orthogonal projection onto
\[
\mathcal{M}_2 = \set{
	x'\mapsto \begin{pmatrix}b_{11}(x_1)&b_{12}(x_1)\\b_{12}(x_1)&b_{22}(x')\end{pmatrix} \;:\;
	b_{11},b_{12}\in L^2((0,L)), b_{22}\in L^2(S)
}.
\]
These subspaces arise naturally from the structure \eqref{eq:structure-of-G} of the \textit{scaled limiting strain}, discussed in \S~\ref{sec:structure-strain}.
It is straight forward to check that for all $f\in L^2(S;\R^{2\times 2}_\sym)$ we have
\begin{equation}\label{eq:proj-2}
\P_2(f)(x') = \begin{pmatrix}
\int_{-\frac12}^{\frac12} f_{11}dx_2 
& \int_{-\frac12}^{\frac12} f_{12}dx_2
\\
\int_{-\frac12}^{\frac12} f_{12}dx_2  & f_{22}
\end{pmatrix},
\end{equation}
as well as
\begin{equation}\label{eq:proj3}
\P_3(f)(x) = \begin{pmatrix}
\int_{-\frac12}^{\frac12}\int_{-\frac12}^{\frac12} f_{11}dx_3dx_2 + x_2 \cdot 12\int_{-\frac12}^{\frac12}\int_{-\frac12}^{\frac12} x_2 f_{11}dx_3dx_2
& \int_{-\frac12}^{\frac12} f_{12}dx_3 
\\
\int_{-\frac12}^{\frac12} f_{12}dx_3  &\int_{-\frac12}^{\frac12} f_{22}dx_3 
\end{pmatrix}
+ x_3\cdot 12 \int_{-\frac12}^{\frac12} x_3 f dx_3
\end{equation}
for every $f\in L^2(\O;\R^{2\times 2}_\sym)$.

\begin{lemma}\label{lem:Q2-bar-def}
	For each $F\in L^2(S;\mathbb{R}^{2\times 2}_{\text{sym}})$, let
	\[\overline{Q}_2(F)\coloneqq \min_{A} \int_\O Q_2\bra{A(x')+x_3 F(x')+ \check{B}(x)} dx,\]
	where the minimum is taken over all $A\in L^2(S;\R^{2\times 2}_\sym)$ of the form
	\[A(x') = \begin{pmatrix}\gamma_1(x_1)+x_2\g_2(x_1)&a_{12}(x')\\a_{12}(x')&a_{22}(x')\end{pmatrix}\]
	for some $\g_i \in L^2((0,L)), a_{ij}\in L^2(S)$.
	Then 
	\[\overline{Q}_2(F) = \frac{1}{12} \int_S Q_2(F-\overline{\II})dx' + \alpha\]
	where $\overline{\II}\in L^2(S;\mathbb{R}^{2\times 2}_{\text{sym}})$ and $\alpha\geq 0$ are given by
	\begin{align}
	\alpha &\coloneqq \int_\O Q_2\bra{(\id-\P_3)(\check{B})}dx,\label{eq:alpha} 
	\\\overline{\II}(x') &\coloneqq  -12\int_{-\frac12}^{\frac12} x_3  \check{B}(x',x_3) dx_3.\label{eq:oII}
	\end{align}
\end{lemma}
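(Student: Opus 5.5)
The plan is to read $\overline{Q}_2(F)$ as a squared distance in the Hilbert space $\mathcal{H}_3$ and use orthogonal projections. Write $V=\{A(x')\}$ for the admissible class, a closed subspace of $\mathcal{M}_3$. Then
\[
\overline{Q}_2(F)=\min_{A\in V}\norm{A+x_3F+\check B}^2_{\mathcal{H}_3}=\norm{(\id-\P_V)(x_3F+\check B)}^2 .
\]
The minimum is attained because $V$ is closed. I will compute this explicitly using the decomposition $\mathcal{M}_3=V\oplus W_1$, where $W_1=\{x_3G(x'):G\in L^2(S;\R^{2\times 2}_\sym)\}$. The two pieces are $\mathcal{H}_3$-orthogonal: for $A\in V$ and $G\in L^2(S)$ we have $\int_\O Q_2(A(x'),x_3G(x'))\,dx=\int_S Q_2(A,G)\int_{-1/2}^{1/2}x_3\,dx_3\,dx'=0$. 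Hence $\P_3=\P_V+\P_{W_1}$.

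First I split $\check B=(\id-\P_3)\check B+\P_V\check B+\P_{W_1}\check B$. From \eqref{eq:proj3}, the $x_3$-linear part of the projection gives $\P_{W_1}\check B=x_3\cdot 12\int x_3\check B\,dx_3=-x_3\overline{\II}(x')$. Since $x_3F\in W_1\perp V$, we get
\[
(\id-\P_V)(x_3F+\check B)=(\id-\P_3)\check B+x_3\bigl(F-\overline{\II}\bigr).
\]
The first term is orthogonal to all of $\mathcal{M}_3$, and the second lies in $W_1\subset\mathcal{M}_3$. So the squared norm splits by Pythagoras as
\[
\int_\O Q_2\bigl((\id-\P_3)\check B\bigr)\,dx+\int_S\int_{-1/2}^{1/2}x_3^2\,dx_3\,Q_2(F-\overline{\II})\,dx' ,
\]
which equals $\alpha+\frac1{12}\int_S Q_2(F-\overline{\II})\,dx'$. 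Finally, $\alpha\ge0$ because $Q_2$ is positive-definite on symmetric matrices (Lemma \ref{lem:bigQ2-is-quadratic}), and $\overline{\II}\in L^2$ because $B\in L^\infty$.

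The only step needing care is justifying the formula for the projection onto $W_1$. I would verify it directly rather than rely on \eqref{eq:proj3}. For any $G$, expand $\int_\O Q_2(\check B-x_3\cdot12\int x_3\check B,\;x_3G)\,dx$ and carry out the $x_3$ integral first. Using $\int x_3^2=1/12$, this integral vanishes, so the residual is orthogonal to $W_1$ and the formula holds.
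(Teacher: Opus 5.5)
Your proof is correct and follows essentially the same route as the paper: split $\check B$ via $\P_3$ into an admissible part (the paper's $\widetilde A$, your $\P_V\check B$), an $x_3$-linear part equal to $-x_3\overline{\II}$, and the residual $(\id-\P_3)\check B$. Then apply Pythagoras, with the minimum attained at $A=-\widetilde A$. What you add is an explicit check of two facts the paper uses tacitly when it splits the integral into three terms: admissible fields are orthogonal to $x_3$-linear ones, and the $x_3$-linear part of the projection is given by the stated formula.
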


\begin{proof} 
	Write $\P_3(\check{B})= \widetilde{A}+x_3 \widetilde{F}$ for some $\widetilde{A},\; \widetilde{F}\in L^2(S;\mathbb{R}^{2\times 2}_{\text{sym}})$, which we are able to specify via equation \eqref{eq:proj3}. Notice that for each $A\in L^2(S;\mathbb{R}^{2\times 2}_{\text{sym}})$ we have
	\begin{align*}
	\int_\O Q_2\bra{A+x_3 F + \check{B} }dx
	&=\int_\O Q_2\bra{A+\widetilde{A}+x_3(F+\widetilde{F})+(\id-\P_3)( \check{B})}dx
	\\&= \int_S Q_2(A+\widetilde{A})dx' +\frac1{12} \int_S Q_2(F+\widetilde{F})dx' +\int_\O Q_2\bra{(\text{Id}-\P_3)(\check{B})}dx
	\end{align*}
	Since $Q_2$ is positive-definite on symmetric matrices, the above quantity is minimized when $A=-\widetilde{A}$. Hence
	$\overline{Q}_2(F) = \frac1{12}\int_S Q_2(F-\overline{\II})dx' + \alpha$,
	where $\overline{\II}\coloneqq -\widetilde{F}$ and $\alpha$ are given by equations \eqref{eq:oII} and \eqref{eq:alpha}.
\end{proof}

The knowledge of the energy density $W$ and the prestrain $B$ induces the notions of \textit{reference} second fundamental form, \textit{reference} geodesic curvature, and \textit{excess energy}:
\begin{definition}\label{def:effective-excess}\leavevmode
	\begin{enumerate}
\item The \emph{reference second fundamental form} (or the \emph{preferred bending}) \emph{along the midline} is the function
$\wtfII\in L^2((0,L); \R^{2\times 2}_\sym)$ given by
\begin{equation}\label{eq:effective-form}
\wtfII(x_1)=\int_{-\frac12}^{\frac12}\oII(x_1,x_2)dx_2
,
\end{equation}
where $\oII(x') = -12 \int_{-\frac12}^{\frac12} x_3 \check{B}(x',x_3)dx_3$. 
\item The \emph{excess} (or \emph{residual}) \emph{energy} is the non-negative  constant given by
\begin{equation}\label{eq:excess-energy}
C_{\text{excess}} \coloneqq
\frac{1}{24} \int_S Q_2\bra{(\id-\P_2)(\oII) }  dx' + 
\frac12 \int_\O Q_2\bra{ (\id-\P_3)(\check{B}) }  dx.
\end{equation}
\item The \textit{reference geodesic curvature at scale $t/w$} is the function $\kappa_g\in L^\infty((0,L);\R)$ given by
\[\kappa_g(x_1) = 12\int_{-\frac12}^{\frac12}\int_{-\frac12}^{\frac12} x_2 B_{11}(x_1,x_2,x_3)dx_3dx_2.\]
\end{enumerate}
\end{definition}

This naming is motivated by the fact that, via \eqref{eq:proj3} and \eqref{eq:oII}, we have
\begin{equation}\label{eq:P3B-kg}
\P_3(\check{B})= \begin{pmatrix}\kappa_0+x_2\kappa_g&\tilde{a}_{12}\\\tilde{a}_{12}&\tilde{a}_{22}\end{pmatrix} -x_3 \oII
\end{equation}
for some $\kappa_0 \in L^\infty((0,L)), \tilde{a}_{ij}\in L^\infty(S)$, and
for a metric of the form $I-2t \P_3(\check{B})$ the geodesic curvature of the midline is $\frac{t}{w}\kappa_g+O(t)$ and the second fundamental form of the midsurface is is $\oII+O(t)$ (cf. \S\ref{sec:setting} and also the discussion in \cite[\S2]{mm25}).
We remark that, by definition of the orthogonal projections $\P_2$ and $\P_3$, 
the non-negative constant $C_{\text{excess}}$  is positive unless the $2\times 2$ leading submatrix of $B$ has the form
\begin{equation}\label{eq:non-generic-prestrain}
\check{B}(x_1,x_2,x_3)=
\begin{pmatrix}
\g_1(x_1)+x_2\g_2(x_1) & a_{12}(x_1,x_2)\\
a_{12}(x_1,x_2) & a_{22}(x_1,x_2)
\end{pmatrix}
+x_3 \,
\begin{pmatrix}
b_{11}(x_1) & b_{12}(x_1)\\
b_{12}(x_1) & b_{22}(x_1,x_2)
\end{pmatrix}
\end{equation}
for some $\g_i,a_{ij},b_{ij}\in L^\infty$.

\subsection{Narrow Ribbons: Compactness Lemmas and the Limiting Strain}\label{sec:narrow-prelim}
\paragraph{The Associated Second Fundamental Form.}
We say that a sequence $(y^{t,w})\subseteq W^{1,2}(\O;\mathbb{R}^3)$ has \textit{finite energy} 
 if
\begin{equation}\label{finitebending}
\limsup_{t,w\to 0} \frac{1}{t^2} I^{t,w}(y^{t,w}) < \infty,
\end{equation}
where, in this section, "$t,w\to0$" always means in the narrow sense $w^2\ll t \ll w$. 
Below, we state compactness results for finite energy sequences which directly follow from the Euclidean case \cite{fmp12} using the fundamental geometric rigidity estimate \cite{fjm2002}: 
\begin{lemma}\label{lem:rotations-existence}
For any sequence $(y^{t,w})\subseteq W^{1,2}(\O;\mathbb{R}^3)$ with finite energy, there exists an associated sequence $(R^{t,w})\subseteq C^\infty(S;\SO(3))$ of rotation-fields such that
\begin{enumerate}[label=(\roman*)]
	\item $\displaystyle\norm{\D_{t,w}y^{t,w}-R^{t,w}}_{L^2} \leq C t$,
	\item $\displaystyle\norm{R^{t,w}_{,1}}_{L^2} \leq C$, \quad
	$\displaystyle\norm{R^{t,w}_{,2}}_{L^2} \leq C w$.
\end{enumerate}
\end{lemma}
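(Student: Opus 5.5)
The plan is to reduce to the Euclidean compactness result of \cite{fmp12} by showing that a finite-energy sequence for $I^{t,w}$ also has Euclidean bending energy of order $t^2$. Since $B\in L^\infty$, for $t$ small the matrix $I+tB(x)$ is invertible with $|(I+tB)^{-1}-I|\le Ct$ uniformly. Write $F=\D_{t,w}y^{t,w}$. By coercivity (iii),
\[
\dist^2\bra{F(I+tB),\SO(3)}\le C\,W\bra{F(I+tB)},
\]
so $\int_\O \dist^2(F(I+tB),\SO(3))\,dx\le Ct^2$. Next I compare $\dist(F,\SO(3))$ with $\dist(F(I+tB),\SO(3))$. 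For any $R\in\SO(3)$,
\[
|F-R|\le |F(I+tB)-R|\,|(I+tB)^{-1}|+|R|\,|(I+tB)^{-1}-I|\le C\,|F(I+tB)-R|+Ct .
\]
Taking the infimum over $R$, squaring and integrating gives $\int_\O\dist^2(F,\SO(3))\,dx\le Ct^2$. Equivalently, $\int_\O\dist^2(\D_{t,w}y^{t,w},\SO(3))\,dx\le Ct^2$, which is exactly the finite-energy hypothesis (Kirchhoff scaling $\eps\sim t$) of the Euclidean narrow-ribbon analysis, for which only the distance to $\SO(3)$ matters.

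With this in hand, I would invoke the construction of \cite{fmp12}, which rests on the rigidity estimate of \cite{fjm2002}. One covers the (unscaled) ribbon by cubes of side $t$. On each cube the rigidity estimate gives a constant rotation whose $L^2$ distance to $\D v$ is controlled by the local energy. Comparing neighbouring cubes yields difference-quotient bounds: of order $t$ per unit length in $x_1$, and of order $tw$ per unit width in the scaled $x_2$ variable. A mollification of these piecewise constant rotations at scale $t$, followed by projection back onto $\SO(3)$, produces smooth $R^{t,w}:S\to\SO(3)$. The projection is legitimate since the mollified field is uniformly close to $\SO(3)$ away from a set controlled by the energy.

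The resulting field satisfies (i): $\norm{\D_{t,w}y^{t,w}-R^{t,w}}_{L^2}\le Ct$. It also satisfies the derivative bounds in (ii), $\norm{\p_1R^{t,w}}_{L^2}\le C$ and $\norm{\p_2R^{t,w}}_{L^2}\le Cw$. Here the factor $w$ comes from the rescaling $z_2=wx_2$. The only genuine work is this last step, which is imported verbatim from \cite{fmp12}. The main point to check is that the constants there depend only on the Euclidean energy bound, and not on the domain beyond the regime $t\ll w$, so that the prestrain enters solely through the perturbative comparison above.
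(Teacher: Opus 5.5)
Your proposal is correct and follows the paper's route. The paper likewise uses the coercivity of $W$ and $B\in L^\infty$ to obtain $\int_\O \dist^2(\D_{t,w}y^{t,w},\SO(3))\,dx\le Ct^2$, and then cites \cite[Lemma~3.3]{fmp12} for the smooth rotation fields; your explicit bound $|F-R|\le C|F(I+tB)-R|+Ct$ spells out the perturbative step the paper leaves implicit, and your cube-and-mollification sketch is the part imported from \cite{fmp12}.
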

\begin{proof}
By the coercivity of $W$ and the $L^\infty$-boundedness of $B$, any sequence $y^{t,w}$ with finite energy satisfies $\int_\O \dist^2\bra{\D_{t,w}y^{t,w},\SO(3)} dx \leq C t^2$. The claim then follows by \cite[Lemma~3.3]{fmp12}. 
\end{proof}

In \cite{fmp12}, the authors show that, in the case of Euclidean narrow ribbons, the relevant $\Gamma$-limit is finite only if the limit configuration $y$ of the midline and the limit frame $(y',d_2,d_3)$ along the midline (where the notion of the limit is detailed below) are in the class
\begin{align}
\begin{split}
\mathcal{A} \coloneqq \big\{\,(y,d_2,d_3)\in  W^{2,2}((0,L);\mathbb{R}^3) \times W^{1,2}((0,L);\mathbb{R}^3)\times W^{1,2}((0,L);\mathbb{R}^3)\;:\\\; (y_{,1}|d_2|d_3)\in \SO(3)\,\;\text{and}\;\, y_{,1}\cdot d_{2,1} =0\;\,\text{a.e. in}\;\, (0,L)\,\big\}.
\end{split}
\end{align}
The corresponding compactness result can be summarized as follows:
\begin{lemma}\label{lem:associated-form}
(Cf. \cite[Proof of Theorem 5.1(i)]{fmp12})	
	 Let $(y^{t,w})\subseteq W^{1,2}(\O;\mathbb{R}^3)$ be a sequence with finite energy. Then there exists a triple $(y,d_2,d_3)\in \mathcal{A}$ such that (up to subsequences) $\D_{t,w}y^{t,w}\to R\coloneqq (y_{,1}|d_2|d_3)$, 
and there exists a function $K\in L^2(S;\mathbb{R}^{3\times 3})$ for which (up to subsequences) 
\begin{equation}
\frac{1}{w}(\D_{t,w}y^{t,w})_{,2}\wto K \quad \text{weakly in}\; W^{-1,2}(\O, \mathbb{R}^{3\times 3}).
\end{equation}
Moreover, $K$ admits the structure 
\begin{equation*}
K = \bra{d_{2,1}\; {\big|}\; \gamma d_3 \;{\big|}\; -(d_3\cdot d_{2,1})y_{,1}-\gamma d_2},
\end{equation*}
where $\gamma$ is some function in $L^2(S)$.
\end{lemma}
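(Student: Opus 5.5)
The plan is to reduce to the rotation fields of Lemma~\ref{lem:rotations-existence} and to transfer all the compactness and structural information from $R^{t,w}$ to $\D_{t,w}y^{t,w}$. Since $R^{t,w}$ is bounded in $L^\infty$ and satisfies $\norm{R^{t,w}_{,1}}_{L^2}\le C$ and $\norm{R^{t,w}_{,2}}_{L^2}\le Cw$, it is bounded in $W^{1,2}(S)$. Up to a subsequence it therefore converges weakly in $W^{1,2}$ and strongly in $L^2$ to a limit $R$. The $x_2$-derivative of $R$ vanishes, so $R=R(x_1)\in W^{1,2}((0,L);\SO(3))$, using the strong convergence and closedness of $\SO(3)$. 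Estimate (i) then gives $\D_{t,w}y^{t,w}\to R$ in $L^2$.

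Next we identify the columns of $R$. The first column of $\D_{t,w}y^{t,w}$ is $y^{t,w}_{,1}$. After subtracting averages, Poincaré gives $y^{t,w}\to y$ in $W^{1,2}$, where $y$ is independent of $x_2,x_3$ because $y_{,2}=w(\cdot)\to0$ and $y_{,3}=t(\cdot)\to0$. Hence $R=(y_{,1}|d_2|d_3)$, and $y_{,1}\in W^{1,2}$ gives $y\in W^{2,2}$.

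For $K$ we split
\[
\tfrac1w(\D_{t,w}y^{t,w})_{,2}=\tfrac1w R^{t,w}_{,2}+\tfrac1w\bigl(\D_{t,w}y^{t,w}-R^{t,w}\bigr)_{,2}.
\]
The first term is bounded in $L^2$ by (ii). The second has $W^{-1,2}$ norm at most $\frac1w\norm{\D_{t,w}y^{t,w}-R^{t,w}}_{L^2}\le Ct/w\to0$, since $t\ll w$. So $\frac1w(\D_{t,w}y^{t,w})_{,2}$ has the same weak $W^{-1,2}$ limit as $\frac1w R^{t,w}_{,2}$, which converges weakly in $L^2(S)$ to some $K$.

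The structure of $K$ comes from compatibility of mixed derivatives. Column by column,
\[
\tfrac1w\p_2(\D_{t,w}y)e_1=\tfrac1w\p_2\p_1 y=\p_1\bigl(\tfrac1w\p_2 y\bigr)=\p_1\bigl((\D_{t,w}y)e_2\bigr)\to d_{2,1}
\]
in $W^{-1,2}$, which gives $Ke_1=d_{2,1}$. The remaining structure follows from orthogonality. Differentiating $(R^{t,w})^TR^{t,w}=I$ shows that $(R^{t,w})^T\frac1wR^{t,w}_{,2}$ is skew. Since it is the product of a strongly convergent and a weakly convergent sequence, it converges weakly to $R^TK$, which is therefore skew. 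Writing $R^TK$ as a skew matrix with first column $R^Td_{2,1}=(y_{,1}\cdot d_{2,1},\,0,\,d_3\cdot d_{2,1})^T$, we define $\gamma:=d_3\cdot Ke_2$, which lies in $L^2(S)$.

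From skewness, $Ke_2=-(y_{,1}\cdot d_{2,1})y_{,1}+\gamma d_3$ and $Ke_3=-(d_3\cdot d_{2,1})y_{,1}-\gamma d_2$. It remains to show $y_{,1}\cdot d_{2,1}=0$, which is exactly the constraint defining $\mathcal{A}$.

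This constraint is the main obstacle; I would follow \cite{fmp12}. The identity $\p_1(\D_{t,w}y\,e_2)=\frac1w\p_2(\D_{t,w}y\,e_1)$ gives a second compatibility condition: the $x_2$-derivative of $\frac1w(\D_{t,w}y)e_1$ must be of order one. Approximating $\D_{t,w}y$ by $R^{t,w}$ inside a $\frac1w\p_2$-scaled quantity costs $t/w^2$, which is not small. One must therefore work with the weighted averages $\int x_2\,\D_{t,w}y\,dx_2$, or equivalently test against $x_2$, and use the regime $w^2\ll t\ll w$.

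Concretely, I would consider $G^{t,w}=\frac1t\bigl((R^{t,w})^T\D_{t,w}y^{t,w}-I\bigr)$, which is bounded in $L^2$. The $x_2$-moment of its $(1,1)$ entry is controlled in terms of $\frac{w}{t}\,y_{,1}\cdot d_{2,1}$. Boundedness then forces $y_{,1}\cdot d_{2,1}=0$, since $w/t\to\infty$ in this regime. Passing to subsequences throughout completes the proof.
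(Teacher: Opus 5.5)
Everything up to and including the skewness of $R^TK$ is correct, and it follows the paper's route. That route is: rotations $R^{t,w}$ from Lemma~\ref{lem:rotations-existence}; $K$ as the weak limit of $\frac1w R^{t,w}_{,2}$, with the $O(t/w)$ comparison in $W^{-1,2}$; $Ke_1=d_{2,1}$ by commuting $\partial_1$ and $\partial_2$; and $R^TK+K^TR=0$ from differentiating $(R^{t,w})^TR^{t,w}=I$. The gap comes right after, where you misread the skew structure. A skew matrix has zero diagonal, so $(R^TK)_{11}=y_{,1}\cdot Ke_1=y_{,1}\cdot d_{2,1}$ must vanish. This is exactly how the paper obtains the constraint defining $\mathcal{A}$ (``equating the first columns''). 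For the same reason your formula for $Ke_2$ is wrong. Its $y_{,1}$-component is $(R^TK)_{12}=-(R^TK)_{21}=-d_2\cdot d_{2,1}=0$ (since $|d_2|=1$), not $-(y_{,1}\cdot d_{2,1})$, so $Ke_2=\gamma d_3$ directly. Your formula for $Ke_3$ is correct.

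Because you missed this, you treat $y_{,1}\cdot d_{2,1}=0$ as the main obstacle and replace it with an argument that is only asserted. The claim that the $x_2$-moment of $G^{t,w}_{11}$ ``is controlled in terms of $\frac wt\,y_{,1}\cdot d_{2,1}$'' is never derived, so as written $(y,d_2,d_3)\in\mathcal{A}$ is not established. That route can be made rigorous. Using $y_{,12}=w\,\partial_1(\frac1w y_{,2})$, one gets $\frac tw\,\partial_2G^{t,w}_{11}=R^{t,w}e_1\cdot\partial_1R^{t,w}e_2+o(1)$ in the sense of distributions. The left side tends to $0$, since $G^{t,w}$ is bounded in $L^2$ and $t/w\to0$, while the right side tends to $y_{,1}\cdot d_{2,1}$. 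This needs only $t\ll w$, not $w^2\ll t$, but it merely re-derives what the skewness of $R^TK$ already gives. You should replace everything after the skewness step with the one-line diagonal argument.
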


\begin{proof}[Proof of Lemma \ref{lem:associated-form}.]
We follow the arguments as  in \cite[Proof of Theorem 5.1(i)]{fmp12}. Since $(y^{t,w})$ has finite energy, we may apply Lemma \ref{lem:rotations-existence} and obtain a sequence of smooth maps $R^{t,w}:S\to \mathbb{R}^{3\times 3}$ such that
\begin{align}
&R^{t,w}(x') \in \SO(3) \quad \text{for all}\quad x'\in S, \label{eq:R-in-SO}
\\&\norm{\D_{t,w}y^{t,w}-R^{t,w}}_{L^2} \leq C t,\label{eq:R-sim-D}
\\ &\norm{R^{t,w}_{,1}}_{L^2} \leq C, \quad
\norm{R^{t,w}_{,2}}_{L^2} \leq C w. \label{eq:R-derivatives}
\end{align}
From \eqref{eq:R-in-SO} and \eqref{eq:R-derivatives} it follows that there exists $R\in W^{1,2}((0,\ell); \mathbb{R}^{3\times 3})$ such that $R(x_1)\in \SO(3)$ for every $x_1\in (0,\ell)$ and, up to subsequences, $R^{t,w}\wto R$ weakly in $W^{1,2}(S;\mathbb{R}^{3\times 3})$ (and thus strongly in $L^2(S;\mathbb{R}^{3\times 3})$). By \eqref{eq:R-sim-D} we deduce that 
\begin{equation}
\D_{t,w}y^{t,w}\to R\quad\text{in}\quad L^2(S;\mathbb{R}^{3\times 3}).
\end{equation}
This implies that $\D y^{t,w}\to \bra{Re_1\;\big|\;0\;\big|\;0}$ strongly in $L^2(S;\mathbb{R}^{3\times 3})$. 
Therefore there exists $y\in W^{2,2}(\O;\mathbb{R}^3)$ 
 such that, up to additive constants, $y^{t,w}\to y$ strongly in $W^{1,2}(\O;\mathbb{R}^3)$ and $y_{,1}=Re_1$ and $y_{,2}=y_{,3}=0$ a.e. in $\O$. Denote $d_2=Re_2$, $d_3=Re_3$, so that we have $(y_{,1}|d_2|d_3)=R\in \SO(3)$. To prove that $(y,d_2,d_3)$ is in $\mathcal{A}$, we need to prove that $y_{,1}\cdot d_{2,1}=0$ a.e. in $\O$. By the second inequality in \eqref{eq:R-derivatives}, there exists $K\in L^2(S;\mathbb{R}^{3\times 3})$ such that, up to subsequences,
$\frac{1}{w}R^{t,w}_{,2} \wto K\;$ weakly in $\; L^2(S;\mathbb{R}^{3\times 3})$.
Differentiating the identity $\bra{R^{t,w}}^TR^{t,w}=I$, dividing by $w$, and letting $t,w\to 0$, we deduce that 
\begin{equation}\label{eq:RK-identity}
R^TK+K^TR=0.
\end{equation}
Continuing in exactly the same manner as in \cite{fmp12}, we find that
\begin{equation}\label{eq:K-form}
K = \bra{d_{2,1}\; {\big|}\; \gamma d_3 \;{\big|}\; -(d_3\cdot d_{2,1})y_{,1}-\gamma d_2},
\end{equation}
where $\gamma\in L^2(S)$. Hence, equating the first columns in equation \eqref{eq:RK-identity}, we obtain $y_{,1}\cdot d_{2,1} =0$ as wished.	
\end{proof}

Let now $(y^{t,w})\subseteq W^{1,2}(\O;\mathbb{R}^3)$ be a sequence with finite energy, and let $(y,d_2,d_3)\in \A$ and $K$ be as in Lemma~\ref{lem:associated-form}. As we will see, the structure of the scaled limiting strain leads us to consider the \emph{second fundamental form associated with $y^{t,w}$},
\begin{equation}\label{eq:associated-form}
\II \coloneqq \begin{pmatrix}y_{,1}\cdot d_{3,1} & d_2\cdot d_{3,1}\\d_{2}\cdot d_{3,1}& d_2\cdot Ke_3\end{pmatrix}.
\end{equation}
Note that $\II \in L^2(S;\R^{2\times 2}_\sym)$, and that only the component $\II_{22}=d_2\cdot Ke_3$ is (possibly) dependent on $x_2$.

\paragraph{Structure of the Scaled Limiting Strain.}\label{sec:structure-strain}

Let $(y^{t,w})\subseteq W^{1,2}(\O;\R^3)$ be a sequence with finite energy, and let $R^{t,w}\in C^\infty(S; \SO(3))$ be as in Lemma~\ref{lem:rotations-existence}. Define the sequence of \textit{scaled strains} 
\begin{equation}\label{eq:G-def}
G^{t,w} \coloneqq \frac{\bra{R^{t,w}}^T\D_{t,w}y^{t,w}-I}{t}.
\end{equation}
Condition (i) in Lemma~\ref{lem:rotations-existence} implies that $G^{t,w}$ is uniformly bounded, so (up to subsequences) $G^{t,w}$ converges weakly to some $G$ in $L^2(\O;\mathbb{R}^{3\times 3})$. 
We call $G$ the \textit{scaled limiting strain}. In \cite[Proof of Theorem 5.1(i)]{fmp12} it is shown that $\check{G}$ admits the structure
\begin{equation}\label{eq:G-structure}
\check{G}(x',x_3) = x_3 \II + \check{G}(x',0),
\end{equation}
where $\II$ is the associated second fundamental form of $y^{t,w}$, and $\check{G}$ is obtained from $G$ by omitting the third row and third column. 
Our limiting model depends on the fact that the scaled limiting strain admits additional structure; namely, the $(1,1)$-component of $\check{G}(x',0)$ is an $x_2$-affine function. 
Because $\II_{11}$ is constant in $x_2$, this is equivalent to showing that $\p_2^2 G_{11}=0$ in the sense of distributions;
this result was proven in \cite{mm25} in a more general setting (our case corresponds to $Q_0=I$, $\D_t\Psi_t =I$, and $\eps=t$).
Let us reproduce the proof here for our simpler case.
 To that end, we need an $x_2$-independent sequence of approximating rotations:
\begin{lemma}\label{lem:mm-R0} Let  $y^{t,w}$ be a sequence with finite  energy, and let
$R^{t,w}$ be as in Lemma \ref{lem:rotations-existence}. Then for each narrow pair $(t,w)$ there exists $x_2^{t,w}\in (-\frac12,\frac12)$ such that the function
\begin{align*}
R_0^{t,w}:(0,\ell)\longrightarrow& \;\SO(3)\\
x_1\longmapsto& \;R^{t,w}(x_1,\,x_2^{t,w})
\end{align*}
satisfies 
\[
1.\;\norm{ R_{0,1}^{t,w}}_{L^2} \leq C,
\qquad 2.\;\norm{R^{t,w}-R_0^{t,w}}_{L^2} \leq Cw,
\qquad 3.\;\norm{\Sym\bra{\bra{R^{t,w}_0}^TR^{t,w}-I}}_{L^1}\leq C w^2.
\]
\end{lemma}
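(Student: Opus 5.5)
The plan is to pick the slice $x_2^{t,w}$ by an averaging (Chebyshev/Fubini) argument, and then derive the three estimates from the derivative bounds in Lemma~\ref{lem:rotations-existence}(ii).

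First consider, for each $x_2\in(-\frac12,\frac12)$, the slice functions
\[
f_1(x_2)=\int_0^L |R^{t,w}_{,1}(x_1,x_2)|^2dx_1,\qquad f_2(x_2)=\int_0^L\int_{-\frac12}^{\frac12}|R^{t,w}(x_1,s)-R^{t,w}(x_1,x_2)|^2\,ds\,dx_1 .
\]
By Lemma~\ref{lem:rotations-existence}(ii), $\int f_1\,dx_2=\|R^{t,w}_{,1}\|_{L^2}^2\le C$. Since $R^{t,w}$ is smooth, the fundamental theorem of calculus in $x_2$ together with Cauchy--Schwarz gives $|R(x_1,s)-R(x_1,x_2)|^2\le \int_{-1/2}^{1/2}|R_{,2}(x_1,r)|^2dr$. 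Hence $\int f_2\,dx_2\le \|R^{t,w}_{,2}\|^2_{L^2}\le Cw^2$. By Chebyshev, the set of $x_2$ where $f_1>4C$ has measure at most $1/4$, and likewise the set where $f_2>4Cw^2$. So there is an $x_2^{t,w}$ with both $f_1\le 4C$ and $f_2\le 4Cw^2$, which gives estimates 1 and 2 directly.

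For estimate 3, the key algebraic identity uses that $R_0^TR\in\SO(3)$. Writing $R_0^TR=I+M$, orthogonality gives $(I+M)^T(I+M)=I$, i.e. $M+M^T=-M^TM$. Therefore
\[
\Sym\big((R_0^{t,w})^TR^{t,w}-I\big)=-\tfrac12 M^TM,\qquad |M|=|R^{t,w}-R_0^{t,w}|,
\]
so pointwise $|\Sym(R_0^TR-I)|\le \frac12|R^{t,w}-R^{t,w}_0|^2$. Integrating and applying estimate 2 yields $\|\Sym(R_0^TR-I)\|_{L^1}\le \frac12\|R-R_0\|_{L^2}^2\le Cw^2$. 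Here the $L^2$ norms are over $S$ (or equivalently over $\O$, since nothing depends on $x_3$).

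The only potential obstacle is ensuring a single $x_2^{t,w}$ satisfies both bounds simultaneously, and that the slice is meaningful. The union-of-bad-sets measure count handles the first point. Smoothness of $R^{t,w}$ makes the restriction well defined for every $x_2$, so no trace issues arise; one could even choose $x_2^{t,w}$ from a full-measure set if only Sobolev regularity were available. The constants are uniform in $(t,w)$ because they come only from Lemma~\ref{lem:rotations-existence}.
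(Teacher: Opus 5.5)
Your proof is correct and complete. Choosing the slice by Fubini and Chebyshev, combined with the identity $\Sym(Q-I)=-\tfrac12(Q-I)^T(Q-I)$ for $Q\in\SO(3)$, yields all three estimates. In fact $f_2(x_2)\le\|R^{t,w}_{,2}\|_{L^2}^2$ holds for every $x_2$, so only $f_1$ actually needs the measure count.

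The paper gives no argument of its own here; it only cites \cite[Lemma~4.6]{mm25}, adapted to $\eps\sim t$. Your proof fills this in and makes explicit that only Lemma~\ref{lem:rotations-existence}(ii) is used, which is exactly where the adaptation to the energy scaling enters.
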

\begin{proof}
This is precisely \cite[Lemma~4.6]{mm25}, adapted to our energy scaling $\eps \sim  t$.
\end{proof}

\begin{lemma}\label{lem:G} 
(Cf. \cite[Lemma~4.8]{mm25})
	Let $y^{t,w}$ be a sequence with finite energy and consider its scaled limiting strain $G$. The leading $2\times 2$ submatrix of $G$ admits the following structure:
\begin{equation}\label{eq:structure-of-G}
\check{G}(x_1,x_2,x_3) = x_3 \II(x_1,x_2) + \begin{pmatrix}\g_1(x_1)+x_2 \g_2(x_1) & a_{12}(x_1,x_2)\\ a_{21}(x_1,x_2) & a_{22}(x_1,x_2) \end{pmatrix},
\end{equation}
where $\g_i\in L^2((0,L))$, $a_{ij} \in L^2(S)$, and $\II$ is the second fundamental form associated with $y^{t,w}$ as defined in \eqref{eq:associated-form}.
\end{lemma}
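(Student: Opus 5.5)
The plan is to treat the two parts of \eqref{eq:structure-of-G} separately. The $x_3$-structure $\check G(x',x_3)=x_3\II+\check G(x',0)$ is already available from \eqref{eq:G-structure}, i.e.\ from \cite[Proof of Theorem 5.1(i)]{fmp12} combined with Lemma~\ref{lem:associated-form}. So the only new claim is that $G_{11}(x',0)$ is affine in $x_2$ with coefficients depending on $x_1$ alone. Since $\II_{11}=y_{,1}\cdot d_{3,1}$ is $x_2$-independent, it suffices to show that $\bar G_{11}:=\int_{-1/2}^{1/2}G_{11}\,dx_3$ satisfies $\p_2^2\bar G_{11}=0$ in $\mathcal D'(S)$. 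An $L^2(S)$ function with vanishing second $x_2$-derivative is of the form $\g_1(x_1)+x_2\g_2(x_1)$; to get $\g_i\in L^2$, test against $1$ and $x_2$ in the $x_2$-variable. The remaining entries $a_{ij}$ need no structure.

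First I would replace $R^{t,w}$ by the $x_2$-independent rotations $R_0^{t,w}$ of Lemma~\ref{lem:mm-R0}. Set
\[
f^{t,w}:=e_1\cdot (R_0^{t,w})^T y^{t,w}_{,1}, \qquad h^{t,w}:=e_1\cdot (R_0^{t,w})^T\tfrac1w y^{t,w}_{,2}.
\]
The key step is to compare $\frac1t(f^{t,w}-1)$ with $G^{t,w}_{11}=\frac1t\bigl(e_1\cdot (R^{t,w})^T y^{t,w}_{,1}-1\bigr)$. Writing $(R^{t,w})^T=(R^{t,w})^TR_0^{t,w}(R_0^{t,w})^T$ and $\D_{t,w}y=R+tRG$, the difference is governed by $\frac1t(R_0^TR-I)$. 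Its symmetric part is $O(w^2/t)\to0$ in $L^1$ by Lemma~\ref{lem:mm-R0}(3), since $w^2\ll t$. Its skew part is $O(w)$ and pairs only with $G$ and $O(t)$ terms; I expect these contributions to tend to $0$ in $\mathcal D'$ after one more integration by parts. So I aim to show that $\frac1t(f^{t,w}-1)-G_{11}^{t,w}\to0$ in $\mathcal D'(\O)$, and hence that $\frac1t(f-1)\wto G_{11}$ in $\mathcal D'$.

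Next, compatibility of mixed partials, $\p_2 y_{,1}=\p_1 y_{,2}$, together with $\p_2R_0=0$, gives
\[
\p_2 f = w\bigl(\p_1 h - (R_{0,1}e_1)\cdot \tfrac1w y_{,2}\bigr),
\]
and differentiating once more in $x_2$,
\[
\p_2^2\,\tfrac{f-1}{t} = \tfrac wt\,\p_1\p_2 h - \tfrac wt\,(R_{0,1}e_1)\cdot \p_2\bigl(\tfrac1w y_{,2}\bigr).
\]
For the second term, $\tfrac1w y_{,2}=\D_{t,w}y\,e_2$ is within $O(t)$ of $Re_2$, and $\|R_{,2}\|\le Cw$. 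Writing $\p_2(\D_{t,w}ye_2)=\p_2(Re_2)+\p_2(\D_{t,w}ye_2-Re_2)$ and integrating the last piece by parts against test functions, the term is of size $\tfrac wt(w+t)\|R_{0,1}\|_{L^2}$, which tends to $0$ since $w^2\ll t$ and $w\to0$. For the first term, split $h=e_1\cdot R_0^T(\D_{t,w}y-R)e_2+(R_0^TR)_{12}$. The first piece is $O(t)$, so after multiplying by $\frac wt$ it is $O(w)\to0$. For the second piece, decompose $(R_0^TR)_{12}$ into its symmetric and skew parts. The symmetric part is $O(w^2)$ in $L^1$, contributing $w^3/t\to0$.

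The main obstacle is the skew part $\frac wt\,\p_1\p_2\operatorname{skew}(R_0^TR)_{12}$. It is only $O(w\cdot w/t)$ a priori, and $\p_2$ of it equals $(R_0^TR_{,2})_{12}$-type terms of size $w$. So it needs a genuine cancellation rather than a size estimate. Here I would follow \cite[Lemma~4.8]{mm25}: use that $\p_2$ of the skew part equals $\operatorname{skew}(R_0^TR_{,2})$ plus a symmetric-part correction. Then exploit that the $(1,2)$ entry of $\frac1w R^TR_{,2}\to R^TK$ is $d_{2,1}\cdot d_2$-type, which is $x_2$-independent. This lets one move $\p_2$ onto the test function and gain an extra factor $w$, via a second comparison between $R$ and $R_0$. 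If this cancellation works, then $\p_2^2\frac{f-1}{t}\to0$ in $\mathcal D'$, and combined with the comparison step we get $\p_2^2 G_{11}=0$ in $\mathcal D'$, which is the claim.
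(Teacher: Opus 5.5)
Your route is essentially the paper's. You use the same $x_2$-independent rotations $R_0^{t,w}$ from Lemma~\ref{lem:mm-R0} and the same quantity $\frac1t(f^{t,w}-1)=\frac1t\bigl((R_0^{t,w})^T\D_{t,w}y^{t,w}-I\bigr)_{11}$. You also use $\p_2y_{,1}=\p_1y_{,2}$ in the same way, so that two $x_2$-derivatives produce $w^2$ against $1/t$. The only cosmetic difference is that you commute $\p_1$ past $R_0^T$ and work through $h$, whereas the paper keeps $\frac wt\,e_1\cdot R_0^T\p_1 Re_2$ and differentiates it once more in $x_2$.

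Every estimate you actually carry out is correct. The problem is the end: the ``main obstacle'' you identify does not exist. By deferring it to an unspecified cancellation from \cite{mm25} (``if this cancellation works'') you leave the proof formally unfinished.

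Since $R_0$ is independent of $x_2$,
\[
\frac wt\,\p_1\p_2\bigl(R_0^TR\bigr)_{12}=\frac wt\,\p_1\bigl(R_0^TR_{,2}\bigr)_{12}=\frac{w^2}{t}\,\p_1\Bigl(e_1\cdot R_0^T\bigl(\tfrac1w R_{,2}\bigr)e_2\Bigr),
\]
and $\norm{\tfrac1w R_{,2}}_{L^2}\le C$ by Lemma~\ref{lem:rotations-existence}. So this term is $O_{W^{-1,2}}(w^2/t)$, and it tends to zero precisely because $w^2\ll t$. That is the same hypothesis you already use for the other terms, and it is the bound $O(w\cdot w/t)$ you wrote down yourself. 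No symmetric/skew splitting and no cancellation are needed. Up to commuting $\p_1$ with $R_0^T$, this is exactly the paper's final step. The cancellation you sketch is also misdirected, since the $(1,2)$ entry of $R^TK$ is $y_{,1}\cdot Ke_2=\gamma\,y_{,1}\cdot d_3=0$.

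Similarly, your comparison step needs no extra integration by parts. Diagonal entries only see symmetric parts, so
\[
\tfrac1t\bigl(f^{t,w}-1\bigr)-G^{t,w}_{11}=\tfrac1t\bigl(\Sym(R_0^TR-I)\bigr)_{11}+\bigl((R_0^TR-I)G^{t,w}\bigr)_{11}=O_{L^1}\bigl(w^2/t\bigr)+O_{L^1}(w)
\]
by Lemma~\ref{lem:mm-R0}, parts 2 and 3, together with the $L^2$-bound on $G^{t,w}$. This is the identity the paper uses. With these two observations your proof is complete.
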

We may give a geometric interpretation for the $\g_i$ terms:
The term $\g_1$ is the leading order of the $(1,1)$-component of the metric, which represents the order-$t$ stretching of the midline in the direction $x_1$. Meanwhile, $\gamma_2$ represents the first correction for the geodesic curvature of the midline, which is of order $t/w$.

\begin{proof} Let $R^{t,w}$ be the rotations emerging in Lemma \ref{lem:rotations-existence}, where for easy reference we recall that
	\begin{equation}\label{eq:Rtw-easy-ref}
	1.\;\norm{\D_{t,w}y^{t,w}-R^{t,w}}_{L^2} \leq C t,
	\qquad\;
	2.\;\norm{R^{t,w}_{,1}}_{L^2} \leq C,
	\qquad\;
	3.\;\norm{R^{t,w}_{,2}}_{L^2} \leq C w.
	\end{equation} 
We already know that
 $\check{G}(x',x_3)=x_3\II+\check{G}(x',0)$. Because $\II_{11}$ is constant in $x_2$, it follows that we only need to show that, as a distribution,  	$\p^2_2G_{11}=0$.  Use Lemma \ref{lem:mm-R0}(iii) to write
\begin{align*}
\bra{\bra{R^{t,w}_0}^T R^{t,w}G^{t,w}}_{11} &= e_1 \cdot 
\bra{\frac{\bra{R_0^{t,w}}^T \D_{t,w}y^{t,w}-I}{t}-\frac{\Sym\bra{\bra{R^{t,w}_0}^T R^{t,w}-I}}{t}}e_1
\\&= e_1 \cdot \frac{1}{t}\bra{\bra{R_0^{t,w}}^T \D_{t,w}y^{t,w}-I}e_1 + O_{L^1}\bra{\frac{w^2}{t}}.
\end{align*}
Noting that $\p_2 \D_{t,w}y^{t,w}e_1=w \p_1 \D_{t,w}y^{t,w}e_2$, and using the first inequality in \eqref{eq:Rtw-easy-ref}, we get
\begin{align*}
\p_2 \bra{\bra{R^{t,w}_0}^T R^{t,w}G^{t,w}}_{11} &=
e_1 \cdot \frac{w}{t} \bra{\bra{R_0^{t,w}}^T \p_1 \D_{t,w}y^{t,w}e_2}
+O_{W^{-1,1}}\bra{\frac{w^2}{t}}
\\&=
e_1 \cdot \frac{w}{t} \sqbra{\bra{R_0^{t,w}}^T \p_1\bra{ \D_{t,w}y^{t,w}-R^{t,w}}e_2  +\bra{R_0^{t,w}}^T \p_1 R^{t,w}e_2 } +O_{W^{-1,1}}\bra{\frac{w^2}{t}}
\\&= e_1\cdot \frac{w}{t}\bra{R_0^{t,w}}^T \p_1 R^{t,w}e_2  +O_{W^{-1,2}}(w)+O_{W^{-1,1}}\bra{\frac{w^2}{t}}.
\end{align*}
Taking $\p_2$ once again, and using the third inequality in \eqref{eq:Rtw-easy-ref}, we obtain
\begin{equation}\label{eq:p22G11-0}
\begin{split}
\p_2^2 \bra{\bra{R^{t,w}_0}^T R^{t,w}G^{t,w}}_{11} &=
e_1 \cdot \frac{w^2}{t}\bra{R_0^{t,w}}^T \p_1 \bra{\frac{1}{w}R^{t,w}_{,2}}e_2 +O_{W^{-2,2}}(w)+O_{W^{-2,1}}\bra{\frac{w^2}{t}}
\\&= O_{L^2}\bra{\frac{w^2}{t}}+O_{W^{-2,2}}(w)+O_{W^{-2,1}}\bra{\frac{w^2}{t}}
\\&\longrightarrow 0 \qquad\text{in}\; W^{-2,1}.
\end{split}
\end{equation}
Since $G^{t,w}\wto G$ weakly in $L^2$ and $\bra{R_0^{t,w}}^TR^{t,w}\to I$ strongly in $L^2$, we obtain
$\bra{R^{t,w}_0}^T R^{t,w}G^{t,w}e_1 \wto Ge_1$ weakly in $L^2$ (and therefore weakly in $L^1$). Hence 
\[\p_2^2 \bra{\bra{R^{t,w}_0}^T R^{t,w}G^{t,w}}_{11} \wto \p^2_2 G_{11}\qquad \text{in}\; W^{-2,1},\]
which, when comparing with \eqref{eq:p22G11-0}, gives the desired result.
\end{proof}

For the lower bound we will need the following standard result
(cf. \cite[Lemma~3.4]{fmp12}, \cite[Proof of Theorem 3.2(i)]{sch07}, 
\cite[Lemma~5]{pg22}):
\begin{lemma}\label{lem:G-frac12B} Assume that $(y^{t,w})\subseteq W^{1,2}(\O;\mathbb{R}^3)$, $(R^{t,w})\subseteq \SO(3)$ are two sequences such that
\[G^{t,w}\coloneqq \frac{R^{t,w}\D_{t,w}y^{t,w}- I}{t} \rightharpoonup G \quad\text{weakly in}\; L^2(\O;\mathbb{R}^{3\times 3}) .\]
Then
\begin{equation}
\liminf_{t,w\to 0} \frac{1}{t^2} I^{t,w}(y^{t,w}) \geq \frac12 \int_\O Q_3\bra{G(x)+ B(x)}dx.
\label{eq:lowerbound1}
\end{equation}
\end{lemma}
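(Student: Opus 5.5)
The plan is the standard truncation argument of \cite{fjm2002}, as adapted in \cite{sch07,fmp12}. Here $R^{t,w}$ should be understood as $(R^{t,w})^T$ from Lemma~\ref{lem:rotations-existence}, i.e., the sequence of rotation-fields appearing in the statement.

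First, I would use frame indifference to rewrite the energy density:
\[
W\bra{\D_{t,w}y^{t,w}(I+tB)} = W\bra{R^{t,w}\D_{t,w}y^{t,w}(I+tB)} = W\bra{I + t\,(G^{t,w}+B+tG^{t,w}B)}.
\]
Writing $F^{t,w}\coloneqq G^{t,w}+B+tG^{t,w}B$, we have $F^{t,w}\wto G+B$ weakly in $L^2$. This holds because $G^{t,w}$ is bounded in $L^2$ and $B\in L^\infty$, so $tG^{t,w}B\to 0$ strongly in $L^2$.

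The key difficulty is that $tF^{t,w}$ need not be uniformly small, so the expansion \eqref{eq:W-expansion} cannot be applied pointwise everywhere. To handle this, I would introduce the good sets $\chi_{t}\coloneqq \mathbf{1}_{\{|F^{t,w}|\leq t^{-1/2}\}}$. By Chebyshev's inequality the complement has measure $O(t)$, so $\chi_t\to1$ boundedly in measure, and hence $\chi_tF^{t,w}\wto G+B$ weakly in $L^2$. Since $W\geq0$, we can discard the bad set:
\[
\frac{1}{t^2}I^{t,w}(y^{t,w}) \geq \frac1{t^2}\int_\O \chi_t W(I+tF^{t,w})\,dx \geq \int_\O \chi_t\bra{\tfrac12 Q_3(F^{t,w}) - \tfrac{\o(t|F^{t,w}|)}{t^2}}dx.
\]

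Next I would show the error term vanishes. On the good set $t|F^{t,w}|\leq t^{1/2}\to0$, so
\[
\frac{\o(t|F^{t,w}|)}{t^2} \leq \sup_{s\leq t^{1/2}}\frac{\o(s)}{s^2}\,|F^{t,w}|^2 .
\]
Integrating and using the uniform $L^2$ bound on $F^{t,w}$ gives an error of size $o(1)\cdot C\to0$.

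Finally, $\chi_tQ_3(F^{t,w})=Q_3(\chi_tF^{t,w})$, and $Q_3$ is convex and non-negative, so $F\mapsto\int_\O Q_3(F)\,dx$ is weakly lower semicontinuous on $L^2$. Therefore
\[
\liminf\frac1{t^2}I^{t,w}(y^{t,w})\geq \frac12\int_\O Q_3(G+B)\,dx,
\]
which is \eqref{eq:lowerbound1}.

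The main (mild) obstacle is the truncation step: one must verify that $\chi_tF^{t,w}$ still converges weakly to $G+B$. The argument is that $\int(1-\chi_t)F^{t,w}\phi\,dx\to0$ for each test function $\phi\in L^2$, by Cauchy–Schwarz together with $|\{\chi_t=0\}|\to0$ and the equi-integrability of $|\phi|^2$. The remaining steps are routine.
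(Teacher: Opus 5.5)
Your proof is correct and follows essentially the same route as the paper: frame indifference reduces the density to $W(I+tA^{t,w})$ with $A^{t,w}=G^{t,w}+B+tG^{t,w}B$, you truncate to a set where $t\,|A^{t,w}|\lesssim t^{1/2}$, apply the expansion \eqref{eq:W-expansion} and show the remainder vanishes, and conclude by weak lower semicontinuity of the convex form $Q_3$. The only cosmetic difference is that you truncate on $|F^{t,w}|$ rather than on $|G^{t,w}|$, which makes the smallness bound on the good set immediate, and you spell out the Chebyshev and equi-integrability steps that the paper leaves implicit.
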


\begin{proof}
Let $\chi_{t,w}$ denote the indicator function of the set $\left\{x\in \O\;:\; |G^{t,w}(x)|\leq t^{-1/2}\right\}$.  From \eqref{eq:W-expansion} and frame-indifference of $W$ we have 
\begin{align}
\frac{1}{t^2}\int_\O W\bra{\D_{t,w}y^{t,w}\bra{I+t B}}dx
&\geq   \frac{1}{t^2}\int_\O \chi_{t,w}W\bra{\bra{R^{t,w}}^T\D_{t,w}y^{t,w}\bra{I+ t B}}dx
\nonumber  \\&=  \frac{1}{t^2}\int_\O \chi_{t,w}W\bra{I+ t A^{t,w}}dx
\nonumber  \\&\geq \int_\O\frac12 Q_3\bra{\chi_{t,w}A^{t,w}} - \chi_{t,w}\frac{1}{t^2}\o\bra{\abs{t A^{t,w}}} \label{eq:two-integrals-lower-bound}
\end{align}
where $A^{t,w}\coloneqq G^{t,w}+ \left(R^{t,w}\right)^T \D_{t,w}y^{t,w} B$. Because $G^{t,w}\wto G$ by assumption, it follows that $G^{t,w}$ is uniformly bounded, and in particular $R^{t,w}\D_{t,w}y^{t,w}\to I$ strongly:
\[\norm{R^{t,w}\D_{t,w}y^{t,w}- I}_{L^2} = t\cdot \norm{G^{t,w}}_{L^2} \to 0.\]
Therefore $A^{t,w}\wto G+ B$. Now, since $G^{t,w}$ is uniformly bounded, it also follows that $\chi_{t,w}$ converges in measure to the constant function $1$, and therefore 
\begin{equation}\label{eq:weak-convergence-A}
\chi_{t,w}A^{t,w}\wto G+ B.
\end{equation}
Since $Q_3$ is non-negative definite, hence convex, it is lower semi-continuous with respect to the convergence \eqref{eq:weak-convergence-A}. The second integral in \eqref{eq:two-integrals-lower-bound} goes to zero, since it is the product of $\abs{A^{t,w}}^2$, which is bounded in $L^1(\O)$, with $\chi_{t,w} \o\bra{\abs{t A^{t,w}}}/\abs{t A^{t,w}}^2$, which converges 
uniformly to zero
  since 
$\abs{t A^{t,w}}\leq C t^{1/2}$ whenever $\chi_{t,w}\neq 0$. The claim then follows by taking $\liminf$ as $t,w\to0$ in the narrow sense.
\end{proof}

\newpage
\section{Narrow Ribbons: Gamma Convergence}\label{sec:narrow-ribbons}
We are now ready to state the main theorem. As we will see, the class of limiting configurations will be given by the following set of quadruplets:
\begin{align*}
\wtA \coloneqq \big\{\,&(y,d_2,d_3,\widetilde{K}_3)\in  W^{2,2} \times W^{1,2}\times W^{1,2}\times L^2\;:\\
&(y_{,1}|d_2|d_3)\in \SO(3),\; y_{,1}\cdot d_{2,1}  =d_3\cdot \widetilde{K}_3 =0\;\text{and}\; y_{,1}\cdot \widetilde{K}_{3} = d_2 \cdot d_{3,1}
\;\,\text{a.e. in}\;\, (0,L)\,\big\},
\end{align*}
where each $W^{i,j}$ means $W^{i,j}((0,L);\mathbb{R}^3)$ and $L^2$ means $L^2((0,L);\mathbb{R}^3)$. 
Different from the analysis in \cite{fmp12}, in which the limiting energy depends only on the first column of the second fundamental form along the midline (which is induced by the frame $(y_{,1}|d_2|d_3)$, see \eqref{eq:associated-form}), here we aimed for a more detailed theory involving all components of the second form. This motivates the inclusion of the fourth component $\K_3$ in the configuration: each element $(y,d_2,d_3,\widetilde{K}_3)\in \wtA$ induces an \emph{associated averaged second fundamental form},
\begin{equation}\label{eq:fII}
\fII\coloneqq\begin{pmatrix}y_{,1}\cdot d_{3,1} & d_2\cdot d_{3,1}\\d_{2}\cdot d_{3,1}&  d_2\cdot \widetilde{K}_3\phantom{\bigg|}\end{pmatrix}.
\end{equation}

\begin{theorem}\label{thm:limit-model} Assume we are in the narrow ribbon case ($w^2\ll t \ll w$).
\begin{enumerate}
\item \textbf{(Compactness and lower bound)} Let $(y^{t,w})\subseteq W^{1,2}(\O;\mathbb{R}^3)$ be a sequence with finite energy. Then there exists a quadruple $(y,d_2,d_3,\widetilde{K}_3)\in \widetilde{\mathcal{A}}$ and a subsequence (not relabelled) such that $\D_{t,w}y^{t,w} \to (y_{,1}|d_2|d_3)$ in $L^2(\O;\mathbb{R}^{3\times 3})$, and $\widetilde{K}_3$ is obtained from $(y^{t,w})$ via
\begin{align}\label{eq:gen-y23-tw-convergence}
\begin{split}
\widetilde{K}_3(x_1)=\int_{-\frac12}^{\frac12} K_3(x_1,x_2)dx_2, \quad&\text{where}\; K_3\in L^2(S;\mathbb{R}^3)\;\text{satisfies}
\\\frac{1}{tw} y^{t,w}_{,23}\wto K_3\qquad &\text{weakly in}\; W^{-1,2}(\O,\mathbb{R}^3).
\end{split}
\end{align}
Moreover,
\begin{equation}\label{eq:gen-lowerbound}
\liminf_{t,w\to 0} \frac{1}{t^2} I^{t,w}(y^{t,w}) \geq \frac1{24}
\int_0^L Q_2(\fII-\wtfII)dx_1 + C_{\text{excess}}\eqqcolon I^0(y,d_2,d_3,\widetilde{K}_3)
\end{equation}
where $Q_2$ is the quadratic form defined in \eqref{eq:bigQ2-def},  $\wtfII$, $C_{\text{excess}}$ are the reference second fundamental form and excess energy defined in Definition~\ref{def:effective-excess}, and $\fII$ is the averaged second fundamental form defined in \eqref{eq:fII}.

\item \textbf{(Upper bound)}
For every $(y,d_2,d_3,\widetilde{K}_3)\in \widetilde{\mathcal{A}}$ there exists a sequence $(y^{t,w})\subseteq W^{1,2}(\O; \mathbb{R}^3)$ such that $y^{t,w}\to y$ in $W^{1,2}(\O;\mathbb{R}^3)$, $\D_{t,w}y^{t,w}\to (y_{,1}|d_2|d_3)$ in $L^2(\O;\mathbb{R}^{3\times3})$, \eqref{eq:gen-y23-tw-convergence} holds, and 
\[\lim_{t,w\to 0} \frac{1}{t^2} I^{t,w}(y^{t,w}) = I^0(y,d_2,d_3,\widetilde{K}_3),\]
 where the limit is understood in the narrow sense.
\end{enumerate}
\end{theorem}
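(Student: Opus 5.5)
For the compactness and lower bound, I will start from Lemma~\ref{lem:associated-form}. It provides $(y,d_2,d_3)\in\mathcal{A}$, the convergence $\D_{t,w}y^{t,w}\to R=(y_{,1}|d_2|d_3)$, and a limit $K$ of $\frac1w(\D_{t,w}y^{t,w})_{,2}$ of the stated form. Since $\frac{1}{tw}y^{t,w}_{,23}$ is exactly $\frac1w(\D_{t,w}y^{t,w})_{,2}e_3$, we have $K_3=Ke_3=-(d_3\cdot d_{2,1})y_{,1}-\gamma d_2$. I then set $\widetilde K_3=\int K_3\,dx_2$ and check membership in $\wtA$:
\begin{itemize}
\item $d_3\cdot\widetilde K_3=0$ holds because $K_3$ has no $d_3$-component.
\item $y_{,1}\cdot\widetilde K_3=-d_3\cdot d_{2,1}=d_2\cdot d_{3,1}$ follows from orthonormality of the frame.
\end{itemize}
Next, for the scaled strains $G^{t,w}$ from \eqref{eq:G-def}, Lemma~\ref{lem:G-frac12B} applied with $(R^{t,w})^T$ gives $\liminf t^{-2}I^{t,w}\ge\frac12\int_\O Q_3(G+B)$. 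Minimizing over the third row and column gives $\frac12\int_\O Q_3(G+B)\ge\frac12\int_\O Q_2(\check G+\check B)$. By Lemma~\ref{lem:G}, $\check G=A+x_3\II$ with $A$ exactly of the admissible form in Lemma~\ref{lem:Q2-bar-def}. That lemma then yields the bound $\frac1{24}\int_S Q_2(\II-\oII)+\frac12\alpha$.

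The remaining step of the lower bound is to split $\II-\oII$ orthogonally in $\mathcal H_2$. Since $\II\in\mathcal M_2$ (only $\II_{22}$ depends on $x_2$), we get
\[\int_S Q_2(\II-\oII)=\int_S Q_2(\II-\P_2\oII)+\int_S Q_2((\id-\P_2)\oII).\]
By \eqref{eq:proj-2}, $\P_2\oII$ has the entries of $\wtfII$ in the $11$ and $12$ slots and $\oII_{22}$ in the $22$ slot. I then apply Jensen/convexity in $x_2$ to the $22$ entry, i.e.\ $\int_S Q_2(\II-\P_2\oII)\ge\int_0^L Q_2(\fII-\wtfII)$, since the $x_2$-average of $\II-\P_2\oII$ is $\fII-\wtfII$. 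This is where I use that $\fII_{22}=d_2\cdot\widetilde K_3$ is the average of $\II_{22}=d_2\cdot K_3$. Collecting terms gives exactly $I^0$ with $C_{\rm excess}$.

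For the upper bound I will build the ansatz by approximating $(y,d_2,d_3,\widetilde K_3)$ with smooth data and using a diagonal argument. The ansatz is
\[y^{t,w}(x)=y(x_1)+wx_2d_2+tx_3d_3+\tfrac{w^2x_2^2}{2}\,\gamma\, d_3+\text{(correctors)}.\]
The constant-in-$x_2$ function $\gamma$ is chosen so that $K_3=\widetilde K_3$ and hence $\II_{22}=\fII_{22}$; this removes the Jensen loss, because equality there needs $\II_{22}$ to be $x_2$-constant. The correctors will make the scaled strain equal to a prescribed
\[\check G=-\P_3$-part adjustments, namely $A=-\widetilde A,\quad x_3(\fII-\wtfII)\text{-compatible terms},\]
plus optimal third-column terms realizing $Q_2$ from $Q_3$, plus the fluctuation needed for the $(\id-\P_3)\check B$ part. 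The essential point, as in the introduction, is to match the $x_2\gamma_2$ term $\kappa_g$ of \eqref{eq:P3B-kg}. For this I will bend the midline in-plane at curvature $\frac tw\kappa_g$, i.e.\ replace $d_2$ by $d_2+\frac tw(\cdots)$ with a rotation about $d_3$ of order $t/w$. This produces $(1,1)$-strain $-tx_2\kappa_g$, and the tangent vector changes only at order $t/w\to0$, so the convergences are preserved. The term $\frac{w^2}{t}$ must vanish in the errors; this uses $w^2\ll t$. The third-column relaxation and the $x_3$-fluctuations are done with standard $t$-scale correctors $t\int_0^{x_3}d(x',s)\,ds$ in the spirit of \cite{sch07}. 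I will use dominated convergence via \eqref{eq:W-expansion} with $L^\infty$ truncations.

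The main obstacle is this upper-bound construction: simultaneously realizing the $x_2$-affine $(1,1)$ strain via the $O(t/w)$ geodesic curvature, the $x_2$-dependent off-diagonal and $22$ strains $-\tilde a_{ij}$, and the correct $\II_{22}$, while keeping all error terms $o(t)$ in $L^2$ under $w^2\ll t\ll w$. I would first treat smooth $B$ (mollifying and using that $I^0$ is continuous in $B$), and verify $\lim t^{-2}I^{t,w}$ by explicit computation of $(R^{t,w})^T\D_{t,w}y^{t,w}$.
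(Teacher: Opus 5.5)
Your compactness and lower-bound argument is the paper's own, and it is fine: Lemma~\ref{lem:associated-form}, Lemma~\ref{lem:G-frac12B} with $(R^{t,w})^T$, the reduction $Q_3\to Q_2$, Lemma~\ref{lem:Q2-bar-def}, the $\P_2$-splitting, and Jensen in $x_2$.

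The upper bound, however, aims at the wrong target. You take $\gamma$ independent of $x_2$, so that $K_3=\widetilde K_3$ and the associated form is $\II=\fII$. Your reason is that equality in the Jensen step needs $\II_{22}$ to be $x_2$-constant. But Jensen is applied to $\II-\P_2(\oII)$, so equality needs $\II-\P_2(\oII)$ to be $x_2$-independent. Since $\P_2(\oII)-\wtfII=\begin{psmallmatrix}0&0\\0&h\end{psmallmatrix}$ with $h$ of zero $x_2$-mean but in general $x_2$-dependent, the optimal choice is $\II_{22}=\fII_{22}+h$, not $\fII_{22}$.

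Indeed, the lower-bound chain applies to your own sequence, since it only uses $\check G=A+x_3\II$. With $\II=\fII$, the $\P_2$-orthogonal splitting gives
\[
\liminf_{t,w\to 0}\frac{1}{t^2}I^{t,w}(y^{t,w})\;\ge\;\frac1{24}\int_S Q_2(\fII-\oII)\,dx'+\frac12\alpha
\;=\;I^0(y,d_2,d_3,\widetilde K_3)+\frac1{24}\int_S Q_2\bigl(\P_2(\oII)-\wtfII\bigr)\,dx'.
\]
Here the cross term between the $x_2$-independent $\fII-\wtfII$ and the zero-mean $\P_2(\oII)-\wtfII$ vanishes. So your sequence is not a recovery sequence unless $\P_2(\oII)=\wtfII$. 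For example, it fails whenever $\oII\in\mathcal M_2$ and $\oII_{22}$ genuinely depends on $x_2$. The condition $\P_2(\oII)=\wtfII$ is exactly the extra hypothesis the paper must impose in the wide regime, and the narrow regime is supposed to avoid it. The ``fluctuation for the $(\id-\P_3)\check B$ part'' you mention cannot compensate. That part is orthogonal to every attainable limiting strain and is genuine excess energy.

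The missing idea is that for $w^2\ll t$ the zero-mean oscillation of $\II_{22}$ can be produced at cost $o(t)$. Lemma~\ref{lem:associated-form} allows $\gamma\in L^2(S)$, and only its $x_2$-average is fixed by $\widetilde K_3$. The paper takes $K_3=\widetilde K_3+f\,d_2$ with $\int_{-1/2}^{1/2}f\,dx_2=0$, so \eqref{eq:gen-y23-tw-convergence} still holds with the prescribed $\widetilde K_3$. This is realized by two terms in the ansatz:
\begin{itemize}
\item the term $wt\,x_3F\,d_2^{t,w}$ with $F_{,2}=f$;
\item the normal correction $w^2S$ with $S_{,2}=-(x_2\fII_{22}+F)d_3$.
\end{itemize}
Together they make the $O(w)$ part $H$ of $(D^{t,w})^T\D_{t,w}y^{t,w}$ skew-symmetric. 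Its only contribution to $M^TM$ is then $w^2H^TH=o(t)$, and $f\to h$ follows by a diagonal argument.

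In your ansatz this means replacing $\frac{w^2x_2^2}{2}\gamma\,d_3$ by a double $x_2$-primitive of an $x_2$-dependent curvature with the prescribed mean. You also need the matching $wt\,x_3(\cdot)\,d_2$ tilt of the normal.

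Separately, mollifying $B$ is unnecessary. The paper's construction never differentiates $B$; only the correctors $g,f,z$ approximate $L^2$ targets.
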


\begin{proof}[Proof of Theorem~\ref{thm:limit-model} (\textbf{Compactness and lower bound}).]
Let $(y^{t,w})\subseteq W^{1,2}(\O;\mathbb{R}^3)$ be a sequence with finite  energy. Applying Lemma~\ref{lem:associated-form}, we obtain a triple $(y,d_2,d_3)\in \mathcal{A}$ such that (up to subsequences) $\D_{t,w}y^{t,w}\to R\coloneqq (y_{,1}|d_2|d_3)$, and a function $K\in L^2(S;\mathbb{R}^{3\times 3})$ for which (up to subsequences)
\begin{equation}\label{eq:y-weakly-to-K}
\frac{1}{w}(\D_{t,w}y^{t,w})_{,2}\wto K \quad \text{weakly in}\; W^{-1,2}(\O, \mathbb{R}^{3\times 3}),
\end{equation}
with the structure
\begin{equation}\label{eq:K-structure}
K = \bra{d_{2,1}\; {\big|}\; \gamma d_3 \;{\big|}\; -(d_3\cdot d_{2,1})y_{,1}-\gamma d_2}
\end{equation}
for some $\gamma\in L^2(S)$. Define $\K_3\in L^2((0,L);\R^3)$ by
\[\widetilde{K}_3 \coloneqq \int_{-\frac12}^{\frac12} Ke_3(x_1,x_2)dx_2.\]
The structure \eqref{eq:K-structure} of $K$ implies that $d_3\cdot Ke_3=0$ and $y_{,1}\cdot Ke_3 = d_2 \cdot d_{3,1}$, so integrating over $x_2\in (-\frac12,\frac12)$ we obtain $d_3\cdot \widetilde{K}_3=0$ and $y_{,1}\cdot \widetilde{K}_3=d_2\cdot d_{3,1}$. Since $(y,d_2,d_3)\in \A$, this implies that indeed $(y,d_2,d_3,\widetilde{K}_3)\in \wtA$. Furthermore, \eqref{eq:gen-y23-tw-convergence} holds with $K_3\coloneqq Ke_3$ because extracting the third columns from equation \eqref{eq:y-weakly-to-K} gives
 \begin{equation}
 \frac{1}{wt} y^{t,w}_{,23} \wto Ke_3 \quad\text{weakly in}\; W^{-1,2}(\O;\mathbb{R}^3).
 \end{equation}
It remains to prove the lower bound \eqref{eq:gen-lowerbound}.
Recall that the scaled strains $G^{t,w}$ converge weakly in $L^2(\O;\R^{3\times 3})$ to the scaled limiting strain $G$, which admits the structure $\check{G}(x',x_3)=x_3 \II+\check{G}(x',0)$, where $\II=\begin{psmallmatrix}y_{,1}\cdot d_{3,1} & d_2\cdot d_{3,1}\\d_{2}\cdot d_{3,1}& d_2\cdot Ke_3\end{psmallmatrix}$ is the second fundamental form associated with $y^{t,w}$. Applying Lemma~\ref{lem:G-frac12B} we obtain
\begin{align}\label{eq:lower-first}
\begin{split}
\liminf_{t,w\to 0} \frac{1}{t^2} I^{t,w}(y^{t,w}) &\geq \frac12 \int_\O Q_3\bra{G(x)+ B(x)}dx
\\&\geq \frac12 \int_\O Q_2\bra{\check{G}(x)+ \check{B}(x)}dx
\\&=\frac12 \int_\O Q_2\bra{\check{G}(x',0)+x_3 \II+\check{B}(x)}dx.
\end{split}
\end{align}
Recall from Lemma~\ref{lem:G} that $\check{G}(x',0)\in L^2(S;\R^{2\times 2}_\sym)$ is of the form
$\check{G}(x',0)=\begin{psmallmatrix}\g_1(x_1)+x_2 \g_2(x_1) & a_{12}(x_1,x_2)\\ a_{21}(x_1,x_2) & a_{22}(x_1,x_2) \end{psmallmatrix}$
for some $\g_i \in L^2((0,L)), a_{ij}\in L^2(S)$. Hence, by Lemma~\ref{lem:Q2-bar-def}, 
\begin{equation}\label{eq:lower-second}
\liminf_{t,w\to 0} \frac{1}{t^2} I^{t,w}(y^{t,w})\geq \frac12 \overline{Q}_2\bra{\II}
=\frac{1}{24} \int_S Q_2\bra{\II(x')-\oII(x')}dx' + \frac12 \a, 
\end{equation}
with $\a$ and $\oII$ defined in \eqref{eq:alpha} and \eqref{eq:oII}.
Recall that $\P_2:\mathcal{H}_2\to \mathcal{M}_2$ is the orthogonal projection onto the subspace 
\[
\mathcal{M}_2 = \set{
x'\mapsto \begin{pmatrix}b_{11}(x_1)&b_{12}(x_1)\\b_{12}(x_1)&b_{22}(x')\end{pmatrix} \;:\;
b_{11},b_{12}\in L^2((0,L)), b_{22}\in L^2(S)
}.
\]
Since $\II_{11}$ and $\II_{12}$ depend only on $x_1$, we have $\II-\P_2(\oII) \in \mathcal{M}_2$. Hence
\begin{equation}\label{eq:proj2}
\int_S Q_2\bra{\II(x')-\oII(x')}dx' = \int_S Q_2\bra{\II - \P_2(\oII)} dx' + \int_S Q_2\bra{ (\id-\P_2)(\oII) }dx'.
\end{equation}
Now we employ Jensen's inequality: write
\[
Q_2\bra{\II-\P_2(\oII)} = \sum_{i,j,k,l\in\{1,2\}} \mathbb{M}_{ijkl}\bra{\II-\P_2(\oII)}_{ij}\bra{\II-\P_2(\oII)}_{kl}
\]
for some symmetric fourth-order tensor $\mathbb{M}$, and integrate over $x_2\in(-\frac12,\frac12)$ using Jensen's inequality for the $ijkl=2222$ component (noticing that $s\mapsto s^2$ is convex and  $\mathbb{M}_{2222}=Q_2(\begin{smallmatrix}0&0\\0&1\end{smallmatrix})>0)$ to obtain
\begin{equation}\label{eq:jensen}
\int_{-\frac12}^{\frac12} Q_2\bra{\II-\P_2(\oII)} dx_2\geq Q_2(\fII-\wtfII),
\end{equation}
where $\fII, \wtfII$ are obtained by averaging the $(2,2)$-component of $\II,\P_2(\oII)$ (respectively) over $x_2\in (-\frac12,\frac12)$. 
Explicitly, $\fII$ is the averaged fundamental form associated with $(y,d_2,d_3,\K_3)$, as given in \eqref{eq:fII}, and $\wtfII$ is the reference second fundamental form, defined in Definition \ref{def:effective-excess}. Putting equation \eqref{eq:jensen} back in \eqref{eq:proj2}, and then back in \eqref{eq:lower-second}, we get
 \begin{align*}
 \liminf_{t,w\to 0} \frac{1}{t^2} I^{t,w}(y^{t,w})
  &\geq \frac1{24} \int_0^{L} Q_2\bra{\fII-\wtfII}dx_1 +
  \frac1{24}\int_S Q_2\bra{ (\id-\P_2)(\oII) }dx'
  +\frac12 \a
 \\&=\frac1{24} \int_0^{L} Q_2\bra{\fII-\wtfII}dx_1 +C_{\text{excess}} =I^0(y,d_2,d_3,\K_3),
 \end{align*}
 where we used the definition \eqref{eq:alpha} of $\a$ and the definition \eqref{eq:excess-energy} of the excess energy.
\end{proof}

For the proof of the upper bound part of the theorem, we will need the following approximation lemma:
\begin{lemma}\label{lem:Dtw}
Let $(y,d_2,d_3,\widetilde{K}_3)\in \widetilde{\mathcal{A}}$ and denote 
$R=(y_{,1}\mid d_2\mid d_3)$. Further assume that $y,d_2,d_3,\widetilde{K}_3\in C^\infty([0,L];\mathbb{R}^3)$.
Then for any $\kappa \in C^\infty([0,L])$ we can find sequences 
$d_1^{t,w},d_2^{t,w},d_3^{t,w},\K_3^{t,w} \in C^\infty([0,L];\R^3)$ such that 
\begin{enumerate}[label=(\roman*)]
\item $D^{t,w}\coloneqq(d_1^{t,w}\mid d_2^{t,w}\mid d_3^{t,w})\in \SO(3)$  in $[0,L]$.
\item $D^{t,w} \to R$ in $L^2([0,L];\R^{3\times 3})$   and  $\K_3^{t,w}\to \K_3$ in $L^2([0,L];\R^3)$.
\item For all $t,w$, there holds
\begin{equation}\label{eq:ODE-lem1}
d_1^{t,w}\cdot d_{3,1}^{t,w} =\fII_{11},\qquad d_{2}^{t,w}\cdot d_{3,1}^{t,w} =\fII_{12},\qquad
 d_1^{t,w}\cdot d_{2,1}^{t,w}  = \frac{t}{w} \kappa
\end{equation}
and
\begin{equation}\label{eq:ODE-lem2}
 \widetilde{K}^{t,w}_3 = \fII_{12}\,d_1^{t,w} + \fII_{22}\,d_2^{t,w},
\end{equation}
\end{enumerate}
where
$\fII$ is the averaged second fundamental form associated with $(y,d_2,d_3,\widetilde{K}_3)$, defined as in \eqref{eq:fII}.
\end{lemma}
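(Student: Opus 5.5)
Define the frame $D^{t,w}$ as the solution of a linear ODE (a Frenet--Darboux type system) with prescribed curvature data, and compare it with $R$. The smooth frame $R=(y_{,1}\mid d_2\mid d_3)$ satisfies $R_{,1}=R\,\Lambda$, where $\Lambda$ is the skew-symmetric matrix with entries $\Lambda_{21}=y_{,1}\cdot d_{2,1}$, $\Lambda_{31}=y_{,1}\cdot d_{3,1}$, $\Lambda_{32}=d_2\cdot d_{3,1}$ and $\Lambda_{ij}=-\Lambda_{ji}$. Since $(y,d_2,d_3,\K_3)\in\wtA$, we have $\Lambda_{21}=0$, and the other two entries are exactly $-\fII_{11}$ and $-\fII_{12}$ up to the sign conventions of $d_i\cdot d_{j,1}$. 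For $\kappa\in C^\infty([0,L])$, let $\Lambda^{t,w}$ be the skew matrix obtained from $\Lambda$ by replacing the $(2,1)$ entry $0$ with $\frac{t}{w}\kappa$ (and the $(1,2)$ entry with $-\frac tw\kappa$). Then define $D^{t,w}$ as the solution of
\[
D^{t,w}_{,1}=D^{t,w}\Lambda^{t,w},\qquad D^{t,w}(0)=R(0).
\]
Because $\Lambda^{t,w}$ is smooth and skew, $(D^{t,w})^TD^{t,w}$ is constant, so $D^{t,w}\in\SO(3)$ on $[0,L]$ (the determinant stays $1$ by continuity). This gives (i). The relations \eqref{eq:ODE-lem1} are then read off the columns: $d_{3,1}^{t,w}=D^{t,w}\Lambda^{t,w}e_3$, so $d_1^{t,w}\cdot d_{3,1}^{t,w}=\Lambda^{t,w}_{13}=y_{,1}\cdot d_{3,1}=\fII_{11}$, and similarly for the other two. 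One sign check is needed here to match the conventions of \eqref{eq:fII}.

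For the convergence in (ii), note that $\norm{\Lambda^{t,w}-\Lambda}_{L^\infty}\le C\frac tw\to0$ in the narrow regime. A Gronwall estimate applied to $E=D^{t,w}-R$, which satisfies $E_{,1}=E\Lambda^{t,w}+R(\Lambda^{t,w}-\Lambda)$ with $E(0)=0$, gives $\norm{D^{t,w}-R}_{C^0}\le C\frac tw$. In particular $D^{t,w}\to R$ uniformly, hence in $L^2$, and the convergence even holds in $C^k$, although this is not needed.

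Finally, define $\K^{t,w}_3$ by \eqref{eq:ODE-lem2}, namely $\K_3^{t,w}\coloneqq\fII_{12}d_1^{t,w}+\fII_{22}d_2^{t,w}$, which is smooth. It remains to show that $\K_3^{t,w}\to\K_3$. The limit $\K_3$ has this decomposition: expanding in the orthonormal frame $R$ gives $\K_3=(y_{,1}\cdot\K_3)y_{,1}+(d_2\cdot\K_3)d_2+(d_3\cdot\K_3)d_3$. The constraints in $\wtA$ give $d_3\cdot\K_3=0$ and $y_{,1}\cdot\K_3=d_2\cdot d_{3,1}=\fII_{12}$, and by definition $d_2\cdot\K_3=\fII_{22}$. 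Hence $\K_3=\fII_{12}y_{,1}+\fII_{22}d_2$, and $\K_3^{t,w}\to\K_3$ uniformly by the previous step. The only delicate points are the sign bookkeeping in the skew matrix $\Lambda$, together with verifying that the defining constraints of $\wtA$ are precisely what make $\Lambda_{21}=0$ and the decomposition of $\K_3$ valid. No real obstacle is expected beyond these.
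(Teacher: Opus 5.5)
Your proposal is correct and follows the paper's proof essentially verbatim. The paper also defines $D^{t,w}$ by the skew-symmetric linear ODE with $D(0)=R(0)$ and gets the convergence from $R$ being the $\kappa\equiv0$ solution, where your Gronwall estimate $\norm{D^{t,w}-R}_{C^0}\le C\frac tw$ makes this step quantitative. It likewise obtains $\K_3^{t,w}\to\K_3$ from the decomposition $\K_3=\fII_{12}y_{,1}+\fII_{22}d_2$.

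The one thing to fix is your index bookkeeping, which you already flagged. Since $\Lambda=R^TR_{,1}$ has entries $\Lambda_{ij}=d_i\cdot d_{j,1}$, the constraint $y_{,1}\cdot d_{2,1}=0$ concerns $\Lambda_{12}$, so $\frac tw\kappa$ must go in the $(1,2)$ entry, as in the paper, rather than the $(2,1)$ entry. Otherwise you get $d_1^{t,w}\cdot d_{2,1}^{t,w}=-\frac tw\kappa$.
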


\begin{proof}
 Let $(d_1^{t,w}\mid d_2^{t,w}\mid d_3^{t,w})\eqqcolon D^{t,w}$ be the unique solution to the linear initial value problem
\[
\begin{cases*}
D_{,1} = D
\renewcommand{\arraystretch}{1.2}\begin{pmatrix}
0 & \frac{t}{w}\kappa&\fII_{11} \\
-\frac{t}{w}\kappa& 0 & \fII_{12} \\
-\fII_{11} &-\fII_{12}&0
\end{pmatrix}\qquad &in $[0,L]$,\\
D(0)=R(0).
\end{cases*}
\]
 The equations \eqref{eq:ODE-lem1} hold immediately by the definition of $D^{t,w}$. Also, because $\frac{t}{w}\kappa$ uniformly converges to zero, we have $D^{t,w}\to R$ (because $R$ is the unique solution of the ODE for $\kappa\equiv 0$). Moreover, $D^{t,w}(x_1)\in \SO(3)$ because the coefficient matrix of the given linear ODE is skew-symmetric. Now define $\K_3^{t,w}$ by the formula \eqref{eq:ODE-lem2}. Because $\K_3 = \fII_{12}y_{,1} +\fII_{22}d_2$ (by the definition of $\wtA$ and $\fII$), it follows that $\K_3^{t,w}\to \K_3$ as claimed.
\end{proof}

\begin{proof}[Proof of Theorem \ref{thm:limit-model} (\textbf{Upper bound}).]
Let $(y,d_2,d_3,\widetilde{K}_3)\in \widetilde{\mathcal{A}}$, and denote 
$R=(y_{,1}\mid d_2\mid d_3)$. Assume first that $y,d_2,d_3,\widetilde{K}_3\in C^\infty([0,L];\mathbb{R}^3)$. Choose arbitrary $\g_1,\g_2\in C^\infty([0,L];\R)$, $a_{12},a_{22}\in C^\infty(\overline{S};\R)$ and $z\in C^\infty_0(\O;\R^3)$. Also, choose arbitrary $f\in C^\infty(\overline{S};\R)$ satisfying $\int_{-\frac12}^{\frac12} f(x_1,x_2)dx_2=0$ for all $x_1$. Let $D^{t,w}=(d_1^{t,w}\mid d_2^{t,w}\mid d_3^{t,w})$ and $\K_3^{t,w}$ be as in Lemma~\ref{lem:Dtw} with $\kappa=\g_2$ (this means that $d_1^{t,w}\cdot d_{2,1}^{t,w} = \frac{t}{w}\g_2$).
Define the following functions:
\begin{align*}
Z(x',x_3)&\coloneqq\int_0^{x_3}z(x',s)ds,\qquad \Gamma_1(x_1)\coloneqq\int_0^{x_1}\g_1(s)d_1^{t,w}(s) ds,
\\ F(x_1,x_2)&\coloneqq \int_{-\frac12}^{x_2} f(x_1,s)ds,\quad A_{12}(x_1,x_2)\coloneqq  \int_{-\frac12}^{x_2} a_{12}(x_1,s)ds,\quad
A_{22}(x_1,x_2)\coloneqq  \int_{-\frac12}^{x_2} a_{22}(x_1,s)ds.
\end{align*}
Also, let $S\in C^\infty(\overline{S};\mathbb{R}^3)$ be the function  
\[S(x_1,x_2)= -\int_{-\frac12}^{x_2}\left[s\,\fII_{22}(x_1)+F(x_1,s)\right]d_3(x_1)\,ds,\]
where $\fII_{22} = d_2\cdot \widetilde{K}_3$ as in \eqref{eq:fII}.
Note that $S_{,3}=0$. Define a sequence $y^{t,w}$ by
\begin{align}\label{eq:finer-y-tw-def}
\begin{split}
y^{t,w} = \int_0^{x_1} d_1^{t,w}(s)ds&+ wx_2d_2^{t,w}+t\bra{x_3d_3^{t,w}+\Gamma_1}
\\&+wt\sqbra{x_3\bra{x_2\widetilde{K}_3+Fd_2^{t,w}}+2A_{12}d_1^{t,w}+A_{22}d_2^{t,w}}
\\&+w^2 S +t^2 D^{t,w}Z.
\end{split}
\end{align}
Differentiating, we find that
\begin{equation}\left\{
\begin{aligned}
y^{t,w}_{,1} &= d_1^{t,w} + w x_2 d_{2,1}^{t,w} + t \bra{x_3 d_{3,1}^{t,w}+\g_1d_1^{t,w}} 
+ o(t),
\\\frac{1}{w}y^{t,w}_{,2} &= d_2^{t,w} + t \sqbra{x_3\bra{ \K_3^{t,w}+f d_2^{t,w}}+2a_{12}d_1^{t,w}+a_{22}d_2^{t,w}} + w S_{,2}  + o(t),
\\\frac{1}{t}y^{t,w}_{,3} &= d_3^{t,w} +w \bra{x_2 \K_3^{t,w} +F d_2^{t,w}}+ t D^{t,w}z 
\end{aligned}\right.
\end{equation}
and therefore
\begin{align}\label{eq:Dy-tw}
\begin{split}
\D_{t,w} y^{t,w} =
D^{t,w} &+ t\bra{x_3 d_{3,1}^{t,w}+\g_1d_1^{t,w}\,\mid\,x_3(\K_3^{t,w}+fd_2^{t,w})+ 2a_{12}d_1^{t,w}+a_{22}d_2^{t,w}\,\mid\,D^{t,w}z}
\\&+ w \bra{x_2 d_{2,1}^{t,w}\,\mid\,S_{,2}\,\mid\,x_2 \K_3^{t,w}+\widetilde{F}d_2^{t,w}}+o(t),
\end{split}
\end{align}
where $o(t)$ denotes a function in $L^2(\O;\mathbb{R}^{3\times 3})$ such that $o(t)/t\to 0$ as $t,w\to 0$ (in the narrow sense).
Before we continue, notice that  $y^{t,w}\to y$ in $W^{1,2}$ (up to translating $y^{t,w}$),  $\D_{t,w}y^{t,w}\to R$ in $L^2$, and \eqref{eq:gen-y23-tw-convergence} holds for $K_3\coloneqq \widetilde{K}_3+fd_2$, because of the fact that $\frac{1}{tw}y^{t,w}_{,32} = \widetilde{K}_3^{t,w}+fd_2^{t,w}+O(t/w)$  in $L^2(\O;\mathbb{R}^3)$ and that $\int_{-\frac12}^{\frac12}f(x_1,x_2)dx_2=0$ by choice of $f$. 
Thus it remains to prove the convergence of energies.
Multiplying equation \eqref{eq:Dy-tw} from the left by $(D^{t,w})^T$, using $S_{,2}=-(x_2\fII_{22}+F)d_3$, and recalling the relations \eqref{eq:ODE-lem1} and \eqref{eq:ODE-lem2}, we get
\begin{align}\label{eq:gen-A-def}
\begin{split}
(D^{t,w})^T\D_{t,w} &y^{t,w} =\\
&I+ t \;
\underbrace{\begin{pNiceArray}{w{c}{2.2cm}w{c}{2.2cm}|c}
	\Block{2-2}{{\phantom{w}\begin{psmallmatrix}\g_1+x_2\g_2&2a_{12}
			\\0&a_{22}\end{psmallmatrix}}+ x_3 \sqbra{\fII+ \begin{psmallmatrix}0&0\\0&f\end{psmallmatrix}}\phantom{w}}& & \\ & &z\\ \cline{1-2} 0&0&\end{pNiceArray}
}_{\displaystyle \coloneqq A}
+w\; \underbrace{\begin{pmatrix}0&0&x_2 \fII_{12}\\0&0&x_2\fII_{22}+F\\-x_2\fII_{12}&-x_2\fII_{22}-F&0\end{pmatrix}}_{\displaystyle \coloneqq H}
+o(t).
\end{split}
\end{align}
By frame indifference of $W$ and by polar decomposition, for any $M\in \mathbb{R}^{3\times 3}$ with positive determinant we have 
$W(M)=W(\sqrt{M^TM})$. Since $(D^{t,w})^T\D_{t,w}y^{t,w}$ and $I+ t B$ both have positive determinant for small enough $t,w$ (both tend to $I$ as $t,w\to0$), we can apply this to $M\coloneqq (D^{t,w})^T\D_{t,w}y^{t,w}\bra{I+ t B}$. By skew-symmetry of $H$, and since $w^2$ is $o(t)$ (where here and below we use $o(t)$ in a uniform sense),
  we get
\begin{align*}
M^T M &= \bra{I+ t B^T}\bra{I+tA^T-wH+o(t)}\bra{I+tA+wH+o(t)}\bra{I+t B}
\\&=\bra{I+ t B^T}\bra{I+2 t \Sym A +o(t)}\bra{I+ t B}
\\&= I +2t \Sym A +2t B +o(t),
\end{align*}
so that $\sqrt{M^TM} = I+t (\Sym A+ B) +o(t)$.  Hence
\begin{align*}
W\bra{\D_{t,w} y^{t,w} \bra{I+ t B}}
=  W\bra{I+t\bra{ \Sym A +  B} + o(t)}.
\end{align*}
For small enough $t,w$, the matrix $I+t\bra{ \Sym A +   B} +o(t)$ belongs to a neighbourhood of $I$ in which $W$ is of class $C^2$.  Taylor expansion gives
\begin{align*}
W\bra{\D_{t,w} y^{t,w} \bra{I+ t B}} &= \frac12 Q_3\big(t\bra{ \Sym A +  B} +o(t)\big) + o(t^2)
\\&= \frac12 t^2 Q_3\bra{\Sym A+  B}+o(t^2),
\end{align*}
whence
\[\frac{1}{t^2}W\bra{\D_{t,w} y^{t,w} \bra{I+ t B}} \xrightarrow[t,w\to 0\;\text{narrowly}]{} \frac12 Q_3\bra{\Sym A+  B} \quad\text{pointwise in}\; \O,\] 
and
\[\frac{1}{t^2}\abs{W\bra{\D_{t,w} y^{t,w} \bra{I+ t B}}} \leq  C\bra{\abs{\Sym A+ B}^2+1}.\]
Invoking the dominated convergence theorem, we obtain
\begin{equation}\label{eq:finer-to-Q3}
\frac{1}{t^2}I^{t,w}(y^{t,w}) \xrightarrow[t,w\to 0]{}  \frac12\int_\O Q_3\bra{\Sym A+ B}dx=\frac12\int_\O Q_3\bra{ \begin{pNiceArray}{w{c}{1.3cm}w{c}{1.3cm}|c}
	\Block{2-2}{ g + x_3 \sqbra{\fII+ \begin{psmallmatrix}0&0\\0&f\end{psmallmatrix}}}& & \\ & &z\\ \cline{1-2} 0&0&\end{pNiceArray}+  B}dx,
\end{equation}
where for the last equality we used the definition \eqref{eq:gen-A-def} of $A$, and where we defined $g\coloneqq\begin{psmallmatrix}\g_1+x_2\g_2&a_{12}\\a_{12}&a_{22}\end{psmallmatrix}$. Since $C_0^{\infty}(\O;\mathbb{R}^3)$ is dense in $L^2(\O;\mathbb{R}^3)$, by a standard diagonalization argument (see Appendix~\ref{appendix:diag}) choosing $z=z^{t,w}$ suitably leads to $y^{t,w}$ such that
\[\frac{1}{t^2}I^{t,w}(y^{t,w}) \xrightarrow[t,w\to 0\;\text{narrowly}]{} \frac12 \int_\O Q_2\bra{g+ x_3 \sqbra{\fII+ \begin{psmallmatrix}0&0\\0&f\end{psmallmatrix}} +\check{B}}dx.\]
Recall from \eqref{eq:P3B-kg} that
$\P_3(\check{B})= \widetilde{A} -x_3 \oII$ where
$\widetilde{A}=\begin{psmallmatrix}\kappa_0+x_2\kappa_g&\tilde{a}_{12}\\\tilde{a}_{12}&\tilde{a}_{22}\end{psmallmatrix}$
for some $\kappa_0, \kappa_g\in L^2((0,L)), \tilde{a}_{ij}\in L^2(S)$ (where $\kappa_g$ is the reference geodesic curvature).
Therefore we can approximate $-\widetilde{A}$ using $g$: another diagonalization argument shows that an appropriate choice of $g=g^{t,w}$ gives $y^{t,w}$ for which
\begin{align*}
\frac{1}{t^2} I^{t,w}(y^{t,w}) &\xrightarrow[t,w\to 0]{} \frac12 \int_\O  
Q_2\bra{-\widetilde{A}+ x_3 \sqbra{\fII+ \begin{psmallmatrix}0&0\\0&f\end{psmallmatrix}} +\check{B}}
\\&= \frac12 \int_\O Q_2\bra{ x_3 \sqbra{\fII+ \begin{psmallmatrix}0&0\\0&f\end{psmallmatrix}-\oII} + (\id-\P_3)(\check{B}) }dx
\\&= \frac{1}{24} \int_S Q_2\bra{\fII+ \begin{psmallmatrix}0&0\\0&f\end{psmallmatrix}-\oII }dx' +\frac12 \a
\end{align*}
with $\a$ and $\oII$ defined in \eqref{eq:alpha} and \eqref{eq:oII}. 
Now consider the projection $\P_2(\oII)$ of $\oII$ (cf. \S~\ref{sec:excess-energy}).
Because $\wtfII = \int_{-\frac12}^{\frac12} P_2(\oII) dx_2$, it follows that $\P_2(\oII)-\wtfII$ has the form $\begin{psmallmatrix}0&0\\0&h\end{psmallmatrix}$ where $h\in L^2(S)$ satisfies 
$\int_{-\frac12}^{\frac12}h(x_1,x_2)dx_2=0$ for all $x_1\in (0,L)$. Therefore, we may approximate $\P_2(\oII)-\wtfII$ using $\begin{psmallmatrix}0&0\\0&f\end{psmallmatrix}$ by appropriately choosing $f=f^{t,w}$, obtaining
\begin{align*}
\frac{1}{t^2} I^{t,w}(y^{t,w}) &\xrightarrow[t,w\to 0]{} \frac1{24} \int_S Q_2\bra{\fII+ \P_2(\oII)-\wtfII-\oII}dx' + \frac12 \a
\\&=\frac{1}{24} \int_S Q_2\bra{\fII-\wtfII-(\id-\P_2)(\oII)}dx'+\frac12 \a
\\&=\frac{1}{24} \int_0^{L} Q_2\bra{\fII-\wtfII}dx_1+\frac{1}{24} \int_S Q_2\bra{(\id-\P_2)(\oII) }  dx' +\frac12 \a
\\&=\frac{1}{24} \int_0^{L} Q_2\bra{\fII-\wtfII}dx_1+C_{\text{excess}}
=I^0(y_{,1},d_2,d_3,\widetilde{K}_3),
\end{align*}
where $C_{\text{excess}}$ is the excess energy defined in \eqref{eq:excess-energy}. This proves the smooth case; consider now the general case. As shown in \cite[Proof of Theorem 5.1(ii)]{fmp12}, since $(y,d_2,d_3)\in \mathcal{A}$, we can find sequences $y^n,d_2^n,d_3^n \in C^\infty([0,L];\mathbb{R}^3)$ such that $(y^n,d_2^n,d_3^n)\in\mathcal{A}$ for all $n$, and moreover $y^n$ converges to $y$ strongly in $W^{2,2}(0,L);\mathbb{R}^3)$ while $d_2^n,d_3^n$ converge (respectively) to $d_2,d_3$ strongly in $W^{1,2}((0,L);\mathbb{R}^3)$. By the definition of $\widetilde{\mathcal{A}}$, there exists a function $\widetilde{\gamma}\in L^2((0,L)) $ such that 
\[\widetilde{K}_3 = (d_2\cdot d_{3,1})y_{,1} + \widetilde{\gamma}\, d_2.\]
Let $\widetilde{\gamma}^n\in C^\infty([0,L])$ be a sequence such that $\widetilde{\gamma}^n\to \widetilde{\gamma}$ in $L^2((0,L))$, and define a sequence $\widetilde{K}_3^n\in W^{1,2}((0,L);\mathbb{R}^3)$ by 
\[ \widetilde{K}_3^n \coloneqq (d_2^n\cdot d_{3,1}^n)y_{,1}^n + \widetilde{\gamma}^n\, d_2^n.\]
By  Hölder's inequality, because $d_2^n\to d_2$ in $W^{1,2}((0,L);\mathbb{R}^3)$, we have $d_2^n\to d_2 $ in $L^\infty((0,L);\mathbb{R}^3)$. Hence
\begin{align*}
\norm{\widetilde{\gamma}^nd_2^n-\widetilde{\gamma}d_2}_{L^2((0,L);\mathbb{R}^3)}
&\leq  \norm{\widetilde{\gamma}^nd_2^n-\widetilde{\gamma}d_2^n}_{L^2((0,L);\mathbb{R}^3)} +
\norm{\widetilde{\gamma}d_2^n-\widetilde{\gamma}d_2}_{L^2((0,L);\mathbb{R}^3)}
\xrightarrow[n\to\infty]{} 0,
\end{align*}
so that $\widetilde{\gamma}^nd_2^n\to\widetilde{\gamma}d_2$ strongly in $L^2((0,L);\mathbb{R}^3)$. Similarly it is shown that
$(d_2^n\cdot d_{3,1}^n)y_{,1}^n\to (d_2\cdot d_{3,1})y_{,1}$ strongly in $L^2((0,L);\mathbb{R}^3)$,  so that altogether $\widetilde{K}_3^n\to \widetilde{K}_3$ strongly in $L^2((0,L);\mathbb{R}^3)$. Thus, we have obtained a sequence of quadruplets
$(y^n,d_2^n,d_3^n,\widetilde{K}_3^n)\in\widetilde{\mathcal{A}}$ which satisfy $y^n,d_2^n,d_3^n,\widetilde{K}_3^n\in C^\infty([0,L];\mathbb{R}^3)$, and moreover $y^n\to y$ in $W^{2,2}((0,L);\R^3)$, while $d_2^n,d_3^n$ converge to $d_2,d_3$ (respectively) strongly in $W^{1,2}((0,L);\mathbb{R}^3)$, and $\widetilde{K}_3^n\to\widetilde{K}_3$ strongly in $L^2((0,L);\mathbb{R}^3)$. By the first part of the proof, for each $n\in \mathbb{N}$ we can find a sequence $y^{t,w,n}$ such that $y^{t,w,n}\xrightarrow[t,w\to 0]{} y^n$ strongly in $W^{1,2}((0,L);\mathbb{R}^3)$, $\D_{t,w}y^{t,w,n}\xrightarrow[t,w\to 0]{} (y_1^n|d_2^n|d_3^n)$ strongly in 
$L^2(\O;\mathbb{R}^{3\times 3})$, $\frac{1}{tw}y_{,23}^{t,w,n} \xrightharpoonup[t,w\to 0]{} \widetilde{K}_3^n$ weakly in $W^{-1,2}((0,L);\mathbb{R}^3)$, and

\[
\frac{1}{t^2}I^{t,w}(y^{t,w,n}) \xrightarrow[t,w\to 0]{}\frac1{24}
\int_0^L Q_2\bra{\begin{pmatrix}y_{,1}^n\cdot d_{3,1}^n & d_2^n\cdot d_{3,1}^n\\d_{2}^n\cdot d_{3,1}^n& d_2^n\cdot \widetilde{K}_3^n\phantom{\bigg|}\end{pmatrix}-\wtfII}dx_1 + C_{\text{excess}}.
\]
By another standard diagonalization argument, there exists a sequence $n_{t,w}\to \infty$ such that the sequence $y^{t,w}\coloneqq y^{t,w,n_{t,w}}$ has all the required properties.
\end{proof}

\newpage
\section{Wide Ribbons}\label{sec:wide-ribbons}
In this section we develop a limiting theory of wide ribbons. We first establish a plate theory for $\Glim_{t\to 0} t^{-2} I^{t,w}$, and then analyse the limit and behaviour 
 of the resulting plate energy as $w\to 0$.   Non-Euclidean Kirchhoff plate theories have been widely studied; the case considered here is very similar to \cite{bls16,ll20,pg22}, the last of which offering the required generality of prestrain for our case. The wide ribbon analysis largely follows the arguments in \cite{fhmp16b, mm25}. Due to technical difficulties, we are able to prove the full wide ribbon limit theory only under an additional assumption on the prestrain $B$; namely that $\P_2(\oII)$ is independent of $x_2$, where $\P_2$ is the orthogonal projection defined in \S\ref{sec:excess-energy}. 

We begin by stating the compactness and $\Gamma$-convergence results of the relevant plate theory.
The proof of the $\Gamma$-convergence is easily adapted from the results in \cite{pg22} --- we outline this derivation in Appendix~\ref{sec:appendix}.
Here we denote $\D_w \coloneqq \bra{\p_1 \mid \frac{1}{w}\p_2 }$.

\begin{theorem}[Plates, compactness] \label{thm:plates-compactness}
Fix $w>0$, and let $(y^t)\subseteq W^{1,2}(\O;\R^3)$ be a sequence with finite  energy, i.e., $I^{t,w}(y^t) \leq C t^2$. Then (modulo a subsequence and translations) $y^{t}\to y$ in $W^{1,2}$ and $\D_{t,w} y^t \to\bra{\D_w y \mid \nu_y}$ in $L^2$, where $y\in W^{2,2}(S;\R^3)$ is a \emph{rescaled isometric immersion}, i.e., $\D_w y^\top \D_w y = I_{2\times 2}$, and $\nu_y$ is the normal to $y$.
\end{theorem}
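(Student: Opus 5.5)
Since $w>0$ is fixed, this is the standard Friesecke--James--Müller compactness for Kirchhoff plates applied to the fixed midsurface $S$ of width $1$ (after rescaling). The prestrain only contributes an $O(t)$ perturbation, so I would first remove it.

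\textbf{Step 1: remove the prestrain.} By coercivity of $W$ and $B\in L^\infty$, we have $\dist(F(I+tB),\SO(3))\ge \dist(F,\SO(3)) - Ct|F|$. Together with $|F|\le C+\dist(F,\SO(3))$, this gives $\int_\O \dist^2(\D_{t,w}y^t,\SO(3))\,dx\le Ct^2$, exactly as in the proof of Lemma~\ref{lem:rotations-existence}. Here $w$ is a fixed constant, so the factor $1/w$ in $\D_{t,w}$ is harmless. Changing variables back to $z_2=wx_2$, this is the standard bending-scale bound $\int \dist^2(\nabla_t v^t,\SO(3))\le Ct^2$ on the fixed plate $(0,L)\times(-\frac w2,\frac w2)$.

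\textbf{Step 2: rotation approximation.} I would apply the geometric rigidity estimate \cite{fjm2002} on cubes of side $\sim t$ covering the plate, and mollify the resulting piecewise-constant rotations, as in \cite{fjm2002} (Theorem 4.1 there). This produces $R^t\in C^\infty(S;\R^{3\times3})$ with
\[
\norm{\D_{t,w}y^t-R^t}_{L^2}\le Ct,\qquad \norm{\D_w R^t}_{L^2}\le C,\qquad \norm{\dist(R^t,\SO(3))}_{L^\infty}\le Ct^{1/2}.
\]
The constants depend on $w$, which is fine since $w$ is fixed. The careful covering argument near $\p S$ is the only technical point, and it is handled in \cite{fjm2002}; I expect no genuine obstacle, since everything is a verbatim transfer.

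\textbf{Step 3: pass to the limit.} By Rellich, up to a subsequence $R^t\to R$ strongly in $L^2$, with $R\in W^{1,2}(S;\SO(3))$. Hence $\D_{t,w}y^t\to R$ in $L^2$. In particular $\p_3 y^t = t(\D_{t,w}y^t)e_3\to0$. After subtracting means, Poincaré gives $y^t\to y$ in $W^{1,2}$, with $y$ independent of $x_3$ and $\D_w y=(Re_1\mid Re_2)$. Therefore $\D_w y^\top\D_w y=I_{2\times2}$, and $Re_3=\partial_1 y\times w^{-1}\partial_2 y=\nu_y$ because $R\in\SO(3)$. Finally, $Re_1,Re_2\in W^{1,2}$, so $y\in W^{2,2}(S;\R^3)$, which completes the proof.
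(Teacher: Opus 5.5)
Your proof is correct and takes essentially the paper's route. The paper undoes the $x_2$-scaling (harmless for fixed $w$) and cites the prestrained-plate compactness of \cite[Theorem~1]{pg22}; you instead absorb the $O(t)$ prestrain into the bound $\int_\O\dist^2(\D_{t,w}y^t,\SO(3))\,dx\le Ct^2$ and run the Euclidean argument of \cite{fjm2002}, which is how that cited result is proved anyway.

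One harmless inaccuracy: on a two-dimensional midsurface at the bending scale, the mollified rotations give only a uniform, non-decaying $L^\infty$ bound on $\dist(R^t,\SO(3))$, since energy of order $t^2$ can concentrate on a single $t$-cell; the rate $t^{1/2}$ is the rod estimate. You never use that rate, though, because $R\in\SO(3)$ a.e.\ already follows from $\D_{t,w}y^t\to R$ in $L^2$ together with $\norm{\dist(\D_{t,w}y^t,\SO(3))}_{L^2}\le Ct$.
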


\begin{theorem}[Plates, $\Gamma$-convergence]\label{thm:plates-gamma-convergence}
	\leavevmode
\begin{enumerate}
	\item[(i)] \textbf{Lower Bound}:
Under the assumptions of Theorem~\ref{thm:plates-compactness}, we have
\[
\liminf_{t\to 0} \frac{1}{t^2} I^{t,w}(y^t) \geq J^w(y) \coloneqq 
\frac{1}{24}\int_S Q_2\bra{\II^w_y (x') - \overline{\II}(x')}dx'+C_{\text{excess}}^{(0)},
\]
where 
$\II^w_y\coloneqq -\D_wy^\top \D_w \nu_y$ 
 is the (rescaled) second fundamental form of $y$, and the reference form $\overline{\II}$ and excess energy $C_{\text{excess}}^{(0)}$ are given by
\begin{equation}\label{eq:plates-gamma-convergence-reference}
\overline{\II}(x')=-12\int_{-\frac12}^{\frac12}x_3 \check{B}(x',x_3)dx_3,\qquad C_{\text{excess}}^{(0)}=\frac12 \int_\O Q_2\bra{ (\id-\P)(\check{B}(x)) }dx ,
\end{equation}
where $\P$ is the orthogonal projection from $L^2(\O;\R^{2\times 2}_\sym)$ onto the space of $x_3$-affine functions.
\item[(ii)] \textbf{Recovery Sequence}: For any rescaled isometric immersion $y\in W^{2,2}(S;\R^3)$, there exists a sequence $(y^t)\subseteq W^{1,2}(\O;\R^3)$ with such that $y^{t}\to y$ in $W^{1,2}$, $\D_{t,w} y^t \to\bra{\D_w y \mid \nu_y}$ in $L^2$, and
\[
\lim_{t\to 0} \frac{1}{t^2}I^{t,w}(y^t) =J^w(y).
\]
\end{enumerate}
\end{theorem}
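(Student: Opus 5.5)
The plan is to prove the two parts separately, for fixed $w>0$ (so $\D_{t,w}$ is just the standard thin-plate scaled gradient on the fixed midsurface $S$ with the in-plane derivative $\frac1w\p_2$), and to follow the template of Theorem~\ref{thm:limit-model} and of \cite{sch07,pg22}.

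\textbf{Lower bound.} By coercivity of $W$ and boundedness of $B$, finite energy gives $\int_\O\dist^2(\D_{t,w}y^t,\SO(3))\,dx\le Ct^2$. I will apply the plate version of the geometric rigidity estimate \cite{fjm2002} on cubes of size $t$ to get piecewise-constant, then mollified, rotation fields $R^t\in C^\infty(S;\SO(3))$ with $\norm{\D_{t,w}y^t-R^t}_{L^2}\le Ct$ and $\norm{\D_w R^t}_{L^2}\le C$. This yields Theorem~\ref{thm:plates-compactness} as a by-product. Set $G^t=((R^t)^T\D_{t,w}y^t-I)/t\wto G$. Lemma~\ref{lem:G-frac12B} (whose proof never used $w\to0$) gives $\liminf t^{-2}I^{t,w}\ge\frac12\int_\O Q_3(G+B)\ge\frac12\int_\O Q_2(\check G+\check B)$.

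The key structural step is the standard Friesecke--James--M\"uller computation. Difference quotients in $x_3$ of $G^t$ show that $\check G(x',x_3)=\check G(x',0)+x_3\II^w_y$, with $\check G(x',0)$ an arbitrary $L^2$ symmetric field. Hence the admissible class is exactly $\{A(x')+x_3\II^w_y\}$. Minimizing over $A$ and decomposing $\check B=\P\check B+(\id-\P)\check B$, with $\P\check B=\widetilde A-x_3\oII$ (analogously to Lemma~\ref{lem:Q2-bar-def} but with the unconstrained projection $\P$), the Pythagorean identity in $L^2$ with the $Q_2$-inner product gives $\frac1{24}\int_S Q_2(\II^w_y-\oII)+C^{(0)}_{\text{excess}}$.

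\textbf{Recovery sequence.} I will first treat smooth isometric immersions $y$ (or $W^{2,2}\cap C^\infty$ ones), using the density of smooth isometric immersions in $W^{2,2}$ isometric immersions of a convex domain (Pakzad/Hornung), then diagonalize. For smooth $y$, set
\[
y^t=y+t x_3\nu_y+t^2\bigl(\cdots\bigr)+t\,\Phi(x'),
\]
where $\Phi$ is an in-plane correction with $\sym(\D_w y^T\D_w\Phi)$ realizing $-\widetilde A$, and a $t^2\int_0^{x_3}d(x',s)\,ds$ term with $d$ chosen to optimize out the third row and column as in the definition of $Q_2$. The main obstacle is that an arbitrary symmetric $A(x')$ must be realized as the symmetrized tangential gradient of an in-plane displacement. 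This is the linearized-strain compatibility issue. As in \cite{sch07,pg22}, it is resolved because at order $t$ the in-plane strain is not constrained: one may use a displacement $t\,u$ with $u$ having both tangential and normal components. I expect, however, to need the trick from \cite{pg22}: absorb $\widetilde A$ into the $x_3$-independent part by allowing a $t$-order modification whose strain is arbitrary up to an $L^2$-small error via oscillating (wrinkling-type, or Nash--Kuiper-style) corrections, or note that $Q_2$ cost of $\widetilde A$ cancels only if realizable. I would check whether \cite{pg22}'s recovery construction directly gives arbitrary $A$.

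Finally, a Taylor expansion and dominated convergence, exactly as in the upper bound of Theorem~\ref{thm:limit-model}, give $t^{-2}I^{t,w}(y^t)\to\frac12\int_\O Q_3(\sym A+B)$ with $A$ as constructed, which equals $J^w(y)$. A diagonal argument then removes the smoothness assumptions.
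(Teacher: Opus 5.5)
\textbf{There is a gap in your recovery sequence; your lower bound is fine.} The lower bound is a self-contained version of what the paper imports. The paper's entire proof is the substitution $x_2\mapsto x_2/w$, which identifies $I^{t,w}(y^t)$ with the plate energy $\E^t(v^t)$ of \cite{pg22} for the prestrain $B_0(x)=B(x_1,x_2/w,x_3)$. It then quotes their limit $\frac12\cdot\frac1w\int_{\O^w}Q_2(x_3\D v^\top\D\nu_v+\check B_0-\int_{-1/2}^{1/2}\check B_0\,dx_3)\,dx$ and rewrites it with the $x_3$-affine projection $\P$.

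The gap sits exactly at the step you flag and then leave open. To reach $J^w(y)$, the midsurface must carry an order-$t$ in-plane strain equal to $-\widetilde A$. Here $\widetilde A(x')=\int_{-1/2}^{1/2}\check B(x',x_3)\,dx_3$ is, for $x'$-dependent $B$, an essentially arbitrary $L^\infty$ field.

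Your mechanism is a displacement $t\,u$ with $u=\D_w y\,a+\phi\,\nu_y$. Using $\D_w y^\top\D_w y=I$, it produces the strain $\sym(\D_w y^\top\D_w u)=\sym\D_w a-\phi\,\II^w_y$, and this class is too small:
\begin{itemize}
\item Wherever $\II^w_y=0$, the normal part contributes nothing.
\item For the flat immersion $y=(x_1,wx_2,0)$, which the theorem must cover, the attainable strains are $\{\sym\D_w a : a\in W^{1,2}(S;\R^2)\}$. By Korn this is a closed proper subspace of $L^2(S;\R^{2\times 2}_{\sym})$, annihilated by the rescaled $\mathrm{curl}\,\mathrm{curl}$.
\item Concretely, take $B$ independent of $x_3$ with $\check B=\mathrm{diag}(x_2^2,0)$. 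Then $\oII=0$ and $C^{(0)}_{\text{excess}}=0$, so $J^w(y)=0$ for flat $y$.
\item But any sequence whose midsurface is $y+t\Phi+o(t)$ has limiting energy at least $\frac12\min_{a}\int_S Q_2(\sym\D_w a+\check B)\,dx'>0$.
\end{itemize}
So ``at order $t$ the in-plane strain is not constrained'' is true in the lower bound, where $\check G(x',0)$ is just some $L^2$ field. It is false for constructions that are linear in $t$.

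\textbf{What is missing} is an approximation statement of the following form. For every rescaled isometric immersion $y$ and every $A\in L^2(S;\R^{2\times2}_{\sym})$ there should be midsurfaces $\bar y_t\to y$ with
\begin{itemize}
\item $\D_w\bar y_t^\top\D_w\bar y_t=I+2tA+o(t)$ in $L^2$,
\item $\II^w_{\bar y_t}\to\II^w_y$ in $L^2$,
\item enough uniform control to run your Taylor-expansion argument.
\end{itemize}

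Where $\II^w_y$ vanishes, this essentially forces a nonlinear, F\"oppl--von K\'arm\'an-type mechanism. One uses perturbations $\sqrt t\,\phi\,\nu_y$, which add $\frac t2\D_w\phi\otimes\D_w\phi$ to the metric. One then needs $\sym\D_w a+\frac12\D_w\phi\otimes\D_w\phi\to-\widetilde A$ strongly in $L^2$, while $t\norm{\D_w^2\phi}_{L^2}^2\to0$. For general $\widetilde A$ this requires a Kuiper-type corrugation argument: wavelength $\gg\sqrt t$, with the oscillating part of $\D_w\phi\otimes\D_w\phi$ compensated by an in-plane displacement. It must also be combined with whatever is done on the set where $\II^w_y\neq0$.

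Your mention of wrinkling or Nash--Kuiper corrections points in this direction, but it is not enough as stated. Plain wrinkles give only weak convergence of the strain: a $\cos^2$-oscillating term whose $Q_2$-energy is not that of its average. You fix neither the scaling nor the compensation.

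As written, the upper bound is not established. You need either such a construction or, as the paper does, to take the whole $\Gamma$-convergence from \cite{pg22} via the rescaling $x_2\mapsto x_2/w$.
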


The following compactness result for wide ribbons is taken directly from the more general statement in \cite[Theorem~5.5]{mm25}, while a detailed proof may be found in \cite[Lemma~2]{fhmp16b}. The main result in this section --- the $\Gamma$-convergence in Theorem~\ref{thm:wide-ribbons-gamma-convergence} --- is adapted from \cite[Theorem~5.6]{mm25}.

\begin{theorem}[Wide ribbons, compactness]\label{thm:wide-ribbons-compactness}
Let $(y_w)$ be a sequence of rescaled isometric immersions in $W^{2,2}(S;\R^3)$ such that $J^w(y_w) \leq C$ for every $w>0$. Then, modulo a subsequence and translations, we have
\begin{equation}\label{eq:wide-ribbons-compactness}
y_w \xwto{W^{2,2}} y, \quad
\D_w y_w \xwto{W^{1,2}} (y_{,1}\mid d_2), \quad
\II^w_{y_w} \xwto{L^2} \II\coloneqq \begin{pmatrix}y_{,1}\cdot d_3 & d_{2,1}\cdot d_3\\ d_{2,1}\cdot d_3 & -\gamma\end{pmatrix},
\end{equation}
where $y\in W^{2,2}((0,L);\R^3)$, $R\coloneqq (y_{,1}\mid d_2\mid d_3)\in W^{1,2}((0,L); \SO(3))$ with $y_{,1}\cdot d_{2,1}=0$, and $\gamma \in L^2(S)$.
\end{theorem}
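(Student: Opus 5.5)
The plan is to deduce everything from the uniform bound $J^w(y_w)\le C$ together with the isometry constraint $\D_w y_w^\top\D_w y_w=I_{2\times2}$. Write $R_w\coloneqq(\D_w y_w\mid\nu_{y_w})\in W^{1,2}(S;\SO(3))$. Since $Q_2$ is positive-definite on symmetric matrices, $\overline{\II}\in L^\infty$ and $C^{(0)}_{\text{excess}}\ge0$, the bound gives $\norm{\II^w_{y_w}}_{L^2(S)}\le C$ uniformly in $w$.

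\textbf{Step 1: bounds on the frame.} Use the isometry to write the derivatives of the frame in terms of the second fundamental form and the (vanishing) Christoffel symbols. Because the metric is the constant $I_{2\times2}$ in the rescaled variables, we get $\p_1 R_w=R_w\,\Omega_1$ and $\frac1w\p_2 R_w=R_w\,\Omega_2$. Here $\Omega_i$ are skew matrices whose only nonzero entries are $\pm(\II^w_{y_w})_{i1}$ and $\pm(\II^w_{y_w})_{i2}$ in the third row and column. Consequently $\norm{\p_1R_w}_{L^2}\le C$ and $\norm{\p_2R_w}_{L^2}\le Cw$. After a translation $\int_S y_w=0$, we get $y_w$ bounded in $W^{2,2}$ (first derivatives $y_{w,1}=R_we_1$, $y_{w,2}=wR_we_2$) and $\D_wy_w$ bounded in $W^{1,2}$. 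Extract weakly convergent subsequences: $R_w\wto R$ in $W^{1,2}(S)$, strongly in $L^2$. Since $\p_2R=0$, $R$ depends only on $x_1$ and $R\in W^{1,2}((0,L);\SO(3))$; pointwise membership in $\SO(3)$ passes to the strong limit. Since $y_{w,2}=wR_we_2\to0$, the limit $y$ is independent of $x_2$ with $y_{,1}=Re_1$. Set $d_2=Re_2$ and $d_3=Re_3$. Then $\II^w_{y_w}\wto\II$ in $L^2$ for some $\II$, and it remains to identify $\II$.

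\textbf{Step 2: identifying $\II$.} From $\p_1R_w=R_w\Omega_1$, strong $L^2$ convergence of $R_w$ and weak convergence of $\Omega_1$ give $R_{,1}=R\,\Omega_1^\infty$, where $\Omega_1^\infty$ is built from the first row of $\II$. Reading off entries: the $(1,2)$ entry of $\Omega_1$ vanishes identically (no geodesic curvature term), so $y_{,1}\cdot d_{2,1}=0$. The remaining entries give $\II_{11}=\pm y_{,1}\cdot d_{3,1}$ and $\II_{12}=\pm d_{2,1}\cdot d_3$, with signs fixed by the convention $\II^w_y=-\D_wy^\top\D_w\nu_y$ (the displayed formula $y_{,1}\cdot d_3$ should be read with the appropriate derivative). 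In particular $\II_{11},\II_{12}$ depend only on $x_1$. The entry $\II_{22}$ is only known to be an $L^2(S)$ weak limit; we call it $-\gamma$ and obtain no further constraint on it. The symmetry $\II_{12}=\II_{21}$ is inherited from $\II^w_{y_w}$.

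\textbf{Main obstacle.} The delicate point is the first row of $\II$. We must justify that the product $R_w\Omega_1$ converges to $R\,\Omega_1^\infty$ (strong times weak, which is fine in $L^1$). We must also check the compatibility condition: the vanishing of $\frac1w\p_2$ derivatives in the limit, via the Codazzi identity $\p_2(\Omega_1)=w\,\p_1(\Omega_2)+w[\Omega_1,\Omega_2]$ in distributions, forces $\II_{11},\II_{12}$ to be $x_2$-independent, consistent with the above. I expect this commutator bookkeeping to be the only real work; the rest is the argument of \cite[Lemma~2]{fhmp16b}.
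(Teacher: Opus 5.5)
Your argument is correct and is essentially the standard one from \cite[Lemma~2]{fhmp16b}, to which the paper defers without giving a proof of its own. The ingredients match: the frame equations $\p_1R_w=R_w\Omega_1$, $\p_2R_w=wR_w\Omega_2$ for the flat isometric immersion, uniform $W^{1,2}$ bounds with $\p_2R_w=O(w)$, and identification of the first row of the limit via $R_{,1}=R\,\Omega_1^\infty$. You are also right that the $(1,1)$ entry should read $y_{,11}\cdot d_3=-y_{,1}\cdot d_{3,1}$; the displayed $y_{,1}\cdot d_3$ is identically zero. The Codazzi/commutator bookkeeping you flag as the main obstacle is not needed, since $\Omega_1^\infty=R^\top R_{,1}$ is automatically $x_2$-independent once $R$ is.
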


\begin{theorem}[Wide ribbons, $\Gamma$-convergence]\label{thm:wide-ribbons-gamma-convergence}
	\leavevmode
\begin{enumerate}
	\item[(i)] \textbf{Lower bound}:
	Under the assumptions of Theorem~\ref{thm:wide-ribbons-compactness}, we have
	\[
	\liminf_{w\to 0} J^w(y_w) \geq J^0(\fII)\coloneqq
	\frac{1}{24}\int_0^L \bra{
		Q_2(\fII-\wtfII)+\alpha_Q^+(\det \fII)^+ + \alpha_Q^-(\det \fII)^-
	}dx_1 +C_{\text{excess}},
	\]
where $\fII \coloneqq \int_{-\frac12}^{\frac12}\II dx_2$ is the second fundamental form along the midline, $\wtfII$ is the reference second fundamental form as in Definition~\ref{def:effective-excess}, and the excess energy $C_{\text{excess}}\geq0$ is given by
\begin{equation}\label{eq:wide-ribbon-excess-energy}
C_{\text{excess}} \coloneqq
\frac{1}{24} \int_S Q_2\bra{(\id-\P_2)(\oII) }  dx' + 
\frac12 \int_\O Q_2\bra{ (\id-\P)(\check{B}) }  dx,
\end{equation}	
where $\P_2$ is the orthogonal projection defined in \S\ref{sec:excess-energy}, and $\P$ is the orthogonal projection from $L^2(\O;\R^{2\times 2}_\sym)$ onto the space of $x_3$-affine functions.

Here, the positive constants $\alpha_Q^\pm$ are given by
\[
\alpha_Q^\pm \coloneqq \sup\set{ \alpha >0 \;:\;
	\forall M\in \R^{2\times 2}_\sym,\quad 
	 Q_2(M)\pm \a \det M \geq 0  }.
\]
	\item[(ii)] \textbf{Recovery sequence}: Assume that $\P_2(\oII) = \wtfII$.
For any $y\in W^{2,2}((0,L);\R^3)$, $R=(y_{,1}\mid d_2\mid d_3)\in W^{1,2}((0,L);\SO(3))$ with $y_{,1}\cdot d_{2,1}=0$, and $\gamma\in L^2((0,L))$,  there exists a sequence $(y_w)$ of rescaled isometric immersions in $W^{2,2}(S;\R^3)$ such that \eqref{eq:wide-ribbons-compactness} holds, and moreover
\[
\lim_{w\to 0} J^w(y_w)=J^0(\fII)
\]
where $\fII=\II$ is defined as in \eqref{eq:wide-ribbons-compactness}.

\end{enumerate}
\end{theorem}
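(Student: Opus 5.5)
We reduce the statement to the analogous result for prestrained ribbons in \cite[Theorem~5.6]{mm25} (and \cite{fhmp16b}), after splitting $J^w$ orthogonally. Since $\II^w_{y_w}$ is a field on $S$ whose limit has the structure of $\mathcal{M}_2$, we split the integrand as in the narrow case:
\[
\int_S Q_2(\II^w_{y_w}-\oII)dx' = \int_S Q_2\bra{\II^w_{y_w}-\P_2(\oII)}dx' + \int_S Q_2\bra{(\id-\P_2)(\oII)}dx' + 2\int_S Q_2\bra{\II^w_{y_w}-\P_2\II^w_{y_w},\,(\id-\P_2)(\oII)}dx'.
\]
The third term does not vanish for finite $w$. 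However, $\II^w_{y_w}\wto\II\in\mathcal{M}_2$ weakly in $L^2$, and $\P_2$ is continuous. Hence $\II^w_{y_w}-\P_2\II^w_{y_w}\wto 0$, so the cross term tends to zero. Cleaner still, we expand $Q_2(a-b)$ with $b=\oII$ fixed: the terms linear in $\II^w$ pass to the limit by weak convergence, and only $\int_S Q_2(\II^w_{y_w})$ requires a liminf inequality. The constants $\frac1{24}\int_S Q_2((\id-\P_2)\oII)$ and $C^{(0)}_{\text{excess}}$ together give $C_{\text{excess}}$. Thus the lower bound reduces to
\[
\liminf_{w\to0}\int_S Q_2(\II^w_{y_w}-\P_2\oII)\,dx' \ge \int_0^L \bra{Q_2(\fII-\wtfII)+\alpha_Q^+(\det\fII)^++\alpha_Q^-(\det\fII)^-}dx_1 ,
\]
with the linear-in-$\II^w$ pieces already accounted for.

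For this reduced inequality we follow \cite{fhmp16b,mm25}. The Gauss equation for isometric immersions gives $\det\II^w_{y_w}=0$ a.e. We then use the Jensen/convexification step of \cite[Theorem~5.6]{mm25}. Its key ingredient is that the relaxation of $M\mapsto Q_2(M-N)$ restricted to $\{\det M=0\}$, in the averaged sense across $x_2$ where the $(2,2)$ entry oscillates while the others converge strongly in $x_1$, equals $Q_2(\bar M-N)+\alpha_Q^+(\det\bar M)^++\alpha_Q^-(\det\bar M)^-$. Concretely, write $\II^w=\II^w_{11}$, $\II^w_{12}$, $\II^w_{22}$. From \cite[Lemma~2]{fhmp16b}, the first column converges strongly in $L^2$, because it is controlled by the frame along $x_2$-lines, which is compact by the $W^{1,2}$ bound on $\D_w y_w$. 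So only $\II^w_{22}$ converges merely weakly.

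On $\det=0$ we have $\II^w_{22}\II^w_{11}=(\II^w_{12})^2$. Where $\II_{11}\neq0$, this forces $\II^w_{22}$ to converge strongly, and the limit has $\det\fII=0$. Otherwise a pointwise lower bound $Q_2(M-N)\ge$ the convex envelope on the cone $\{\det M=0\}$ applies, and that envelope is exactly the displayed function of $\bar M$, as computed in \cite[Prop.~5.x]{mm25}. The lower bound then follows from lower semicontinuity of convex integrands under weak convergence, applied after averaging in $x_2$. For the $\P_2\oII$ term, whose $(2,2)$-component depends on $x_2$, we first replace $\P_2\oII$ by $\wtfII$. The difference $\P_2\oII-\wtfII$ has zero $x_2$-mean in its $(2,2)$-slot, paired against $\II^w$. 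This is the step I expect to be most delicate: for the lower bound, the cross term $\int Q_2(\II^w,\P_2\oII-\wtfII)$ must vanish in the limit, which requires the $(2,2)$ weak limit to be $x_2$-independent. It is not in general, since $\gamma$ may depend on $x_2$. I would therefore absorb it by keeping $-\gamma$ general and performing the convexification pointwise in $(x_1,x_2)$ before averaging in $x_2$. Jensen in $x_2$ then applies to the convex envelope. Because the envelope term is convex in $M$ and quadratic in the $N$-dependence, the extra $x_2$-oscillation of $N_{22}$ only contributes nonnegatively, in the same way as the narrow-case Jensen inequality \eqref{eq:jensen}.

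For the recovery sequence, the assumption $\P_2\oII=\wtfII$ removes that issue: $J^w(y_w)=\frac1{24}\int_SQ_2(\II^w_{y_w}-\wtfII)+C_{\text{excess}}+o(1)$ once $\II^w_{y_w}\wto\fII$ with strongly converging first column. We then take the developable-ribbon construction of \cite[Theorem~5.6(ii)]{mm25}/\cite{fhmp16b}. When $\det\fII\ne0$, the rulings are oscillated at scale $w$ between the two rank-one matrices realizing the optimal laminate for $Q_2(\cdot-\wtfII)$ with average $\fII$. This yields rescaled isometric immersions whose second fundamental forms achieve the relaxed energy, followed by a density argument in $(y,R,\gamma)$ and diagonalization as in the appendix.
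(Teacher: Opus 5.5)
Your operative argument for (i) is the paper's. You expand $Q_2(\II^w_{y_w}-\oII)$ so that only $\int_S Q_2(\II^w_{y_w})\,dx'$ needs a liminf inequality. You then use $\det\II^w_{y_w}=0$, the fact that $Q_2(M)+\alpha_Q^+(\det M)^++\alpha_Q^-(\det M)^-$ is the convex envelope of $Q_2$ on $\{\det M=0\}$, and weak lower semicontinuity (the paper cites \cite[Proposition~9]{fhmp16b}). Next you split off $(\id-\P_2)(\oII)$ orthogonally, since $\II\in\mathcal{M}_2$. Finally you apply Jensen in $x_2$: $Q_2(\II-\P_2(\oII))$ is convex in its $(2,2)$ entry, and $(\det\II)^\pm$ are convex in $\II_{22}$ because $\det$ is affine in that entry. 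The cross term $\int_S Q_2(\II^w_{y_w},\P_2(\oII)-\wtfII)\,dx'$ you flag as delicate never arises. $\P_2(\oII)$ is traded for $\wtfII$ only after the limit, through Jensen, which is what your ``convexify pointwise in $(x_1,x_2)$, then average'' amounts to.

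However, you should delete the claim that the first column of $\II^w_{y_w}$ converges strongly in $L^2$, together with the case split built on it, because it is false. The $W^{1,2}$ bound gives strong convergence of the frame $\D_w y_w$, not of its $x_1$-derivative, which is what $\II^w_{11},\II^w_{12}$ are made of. Theorem~\ref{thm:wide-ribbons-compactness} (\cite[Lemma~2]{fhmp16b}) gives only weak convergence.

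For a concrete counterexample, let $c_w$ be a unit-speed curve in the $(e_1,e_3)$-plane with curvature $\sin(x_1/w)$. The maps $y_w=c_w(x_1)+wx_2e_2$ are rescaled isometric immersions with bounded $J^w$, and $\II^w_{y_w}=\pm\sin(x_1/w)\begin{psmallmatrix}1&0\\0&0\end{psmallmatrix}$, which converges weakly but not strongly.

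Worse, the claim contradicts the theorem itself. If the first column converged strongly, then $0=\det\II^w_{y_w}\wto\det\II$ (strong times weak). Since $\det$ is affine in the $(2,2)$ entry, this gives $\det\fII=\int_{-\frac12}^{\frac12}\det\II\,dx_2=0$. The $\alpha_Q^\pm$ terms would then never be active, and (ii) could not reach any $\fII$ with $\det\fII\neq0$. The relaxation comes from oscillations in $x_1$ of the ruling directions, which move all entries; the $x_2$-averaging is a separate Jensen step.

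The case split is also unnecessary. Since $\det\II^w_{y_w}=0$ a.e., $Q_2(\II^w_{y_w})$ equals its convex envelope everywhere, and lower semicontinuity gives the bound on all of $S$ at once.

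In (ii), likewise drop ``with strongly converging first column''; by the above, it fails exactly when $\det\fII\neq0$. It is not needed: $\int_S Q_2(\II^w_{y_w},(\id-\P_2)(\oII))\,dx'\to\int_S Q_2(\fII,(\id-\P_2)(\oII))\,dx'=0$ by weak convergence alone.

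Finally, the lamination scale should be $\gg w$ rather than $w$, or the rulings may cross inside the strip. The paper handles this in three steps: it takes $M^\delta\wto\fII$ with $\det M^\delta=0$ from \cite[Proposition~9]{fhmp16b}, lets $w\to0$ for fixed $\delta$ via the construction of \cite{mm25}, and then diagonalizes.
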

We split the proof into two parts.
\begin{proof}[Proof of Theorem~\ref{thm:wide-ribbons-gamma-convergence} (\textbf{Lower bound}).]
Notice that
\begin{align*}
	\liminf_{w\to 0} J^w(y_w) 
	&= 	\liminf_{w\to 0} \frac{1}{24}\int_S Q_2\bra{\II^w_y (x') - \overline{\II}(x')}dx'+C_{\text{excess}}^{(0)}
	\\&= \liminf_{w\to 0} \frac{1}{24}\int_S \bra{Q_2(\II^w_{y_w})+Q_2(\overline{\II})-2 Q_2(\II^w_{y_w},\overline{\II}) }dx'+C_{\text{excess}}^{(0)}.
\end{align*}
Because $y_w$ is a $W^{2,2}$-isometric immersion, by the Gauss equations we have $\det \II^w_{y_w}=0$ a.e.\ on $S$. Hence, by \cite[
Proposition 9]{fhmp16b} and by the established convergence \eqref{eq:wide-ribbons-compactness} of $\II^w_{y_w}$, it follows that  
 \begin{align*}
 \liminf_{w\to 0} J^w(y_w) 
 	&\geq   \frac{1}{24}\int_S \bra{Q_2(\II) +\alpha_Q^+(\det \II)^+ + \alpha_Q^-(\det \II)^-+Q_2(\overline{\II})-2 Q_2(\II,\overline{\II}) }dx'+C_{\text{excess}}^{(0)}.
 \\&= 
 \frac{1}{24}\int_S \bra{
 	Q_2(\II-\overline{\II}) +\alpha_Q^+(\det \II)^+ + \alpha_Q^-(\det \II)^-
 }dx' +C_{\text{excess}}^{(0)}.
\end{align*}

Similar to the proof of Theorem~\ref{thm:limit-model}, notice that since $\II_{11}$ and $\II_{12}$ depend only on $x_1$, we have $\II-\P_2(\oII) \in \mathcal{M}_2$ (where $\P_2$ is the projection onto matrix-fields in $L^2(S;\R^{2\times 2}_\sym)$ whose first two components are constant in $x_2$, cf. \S\ref{sec:excess-energy}). Hence
\begin{equation*}
\liminf_{w\to 0} J^w(y_w) \geq 
\frac{1}{24}\int_S \bra{
	Q_2(\II-\P_2(\overline{\II})) +\alpha_Q^+(\det \II)^+ + \alpha_Q^-(\det \II)^-
}dx' +C_{\text{excess}},
\end{equation*}
where now the excess energy $C_{\text{excess}}$ is given by \eqref{eq:wide-ribbon-excess-energy}.
By Jensen's inequality and by the positive-definiteness of $Q_2$, together with the fact that the determinant of a matrix is linear in the (2,2)-component, the claim follows.
\end{proof}

As mentioned, we prove the second part of the theorem only under the additional assumption that $\P_2(\oII)$ is independent of $x_2$, i.e., $\P_2(\oII) = \wtfII$.
In that case, the proof is nearly identical to the proof given in \cite[Proof of Theorem~5.6]{mm25}, and so we will omit some details in the proof displayed below.
\begin{proof}[Proof of Theorem~\ref{thm:wide-ribbons-gamma-convergence} (\textbf{Recovery sequence}).] By \cite[Proposition~9]{fhmp16b}, there exists a sequence   $(M^\delta)\subseteq L^2((0,L);\R^{2\times 2}_\sym)$ such that $M^\delta \xwto{L^2} \fII$ as $\delta\to 0$, $\det M^\delta =0$ for every $\delta >0$, and
 \begin{align*}
\lim_{\delta\to 0} \frac{1}{24}\int_S & Q_2(M^\delta-\P_2(\overline{\II})) dx' +C_{\text{excess}}
\\&=
\frac{1}{24}\int_S \bra{
	Q_2(\fII-\P_2(\overline{\II})) +\alpha_Q^+(\det \fII)^+ + \alpha_Q^-(\det \fII)^-
}dx' +C_{\text{excess}}
\\&=
\frac{1}{24}\int_0^L \bra{
	Q_2(\fII-\wtfII)+\alpha_Q^+(\det \fII)^+ + \alpha_Q^-(\det \fII)^-
}dx_1 +C_{\text{excess}} \eqqcolon J^0(\fII),
 \end{align*}
where the second equality follows by the assumption that $\P_2(\oII) = \wtfII$.  
By the exact same argument as in \cite{mm25} (which also offers  some intuition on the construction), we obtain a sequence of rescaled isometric immersions $(y_w^\delta)\subseteq W^{2,2}(S;\R^3)$ satisfying
\[
y_w^\delta \xto{W^{2,2}} y^\delta,\quad
\D_w y_w^\delta \xto{W^{1,2}} (R^\delta e_1, R^\delta e_2),\quad \II^w_{y_w^\delta} \xto{L^2} M^\delta,
\]	
as $w\to 0$, where $R^\delta\in W^{1,2}((0,L);\SO(3))$ is the frame satisfying the Darboux frame equation
\[
\begin{cases*}
R^\delta_{,1} = R^\delta
\renewcommand{\arraystretch}{1.2}\begin{pmatrix}
0 & 0&-M^\delta_{11} \\
0& 0 & -M^\delta_{12} \\
M^\delta_{11} &M^\delta_{12}&0
\end{pmatrix}\qquad &a.e. in $[0,L]$,\\
R^\delta(0)=R(0),
\end{cases*}
\]
and where $y^\delta(x_1)\coloneqq y(0)+\int_0^{x_1}R^\delta(t)e_1 dt$. By dominated convergence we have
\[
\lim_{w\to 0}J^w(y_w^\delta) = \frac{1}{24}\int_S  Q_2(M^\delta-\P_2(\overline{\II})) dx' +C_{\text{excess}},
\]
and so the claim follows by a diagonal argument.
\end{proof}

\newpage
\begin{appendices}

\section{Diagonalization Arguments}\label{appendix:diag}
In this appendix we provide formal justification for the diagonalization arguments employed in the proof of \ref{thm:limit-model}, such as the diagonalization argument involving $z=z^{t,w}$, or the one  involving $y^n,d_2^n,d_3^n,\widetilde{K}_3^n$. They stem from the following basic lemma.
\begin{lemma}\label{lem:diagonalization} Let $(X,d)$ be a metric space. Suppose that we are given a family $\mathcal{F}=(x_{t,n})\subseteq X$, indexed over $t>0$ and $n\in \mathbb{N}$, such that the following limits exists:
	\[\forall \,n\in \mathbb{N},\qquad \lim_{t\to 0} x_{t,n} \eqqcolon x_n\in X\]
	and
	\[\lim_{n\to \infty} x_n \eqqcolon x\in X.\]
Then there exists a subfamily $(x_t)\subseteq \mathcal{F}$, indexed over $t>0$, such that $\lim_{t\to 0} x_t = x$.
\end{lemma}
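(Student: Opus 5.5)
The plan is to build the subfamily by choosing, for each $t$, an index $n(t)$ that grows slowly enough as $t\to 0$ that the error $d(x_{t,n(t)},x_{n(t)})$ still goes to zero. Setting $x_t\coloneqq x_{t,n(t)}$, we then want $n(t)\to\infty$, so that $d(x_{n(t)},x)\to 0$ as well. The triangle inequality
\[
d(x_t,x)\leq d(x_{t,n(t)},x_{n(t)})+d(x_{n(t)},x)
\]
then gives the claim.

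Concretely, for each $n\in\mathbb{N}$, we use $\lim_{t\to0}x_{t,n}=x_n$ to choose $\tau_n>0$ such that $d(x_{t,n},x_n)<1/n$ for all $0<t<\tau_n$. Replacing $\tau_n$ by $\min\{\tau_1,\dots,\tau_n,1/n\}$, we may assume that $(\tau_n)$ is non-increasing and $\tau_n\to0$. We then define $n(t)\coloneqq\max\{n : \tau_n> t\}$ for $0<t<\tau_1$, and set $n(t)=1$ for $t\geq\tau_1$. This maximum exists because $\tau_n\to 0$, so only finitely many $n$ satisfy $\tau_n>t$, and $n=1$ does whenever $t<\tau_1$.

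Since $t<\tau_{n(t)}$, the choice of $\tau_{n(t)}$ gives $d(x_{t,n(t)},x_{n(t)})<1/n(t)$. Moreover $n(t)\to\infty$ as $t\to0$: for any $N$, if $t<\tau_N$ then $n(t)\geq N$. Hence both terms in the triangle inequality tend to zero.

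The only delicate point is ensuring that the map $t\mapsto n(t)$ is well defined and diverges. The monotone modification of $\tau_n$ handles this. In the paper's applications the parameter is the pair $(t,w)$ tending to zero in the narrow sense rather than a single $t$, and the same argument applies verbatim with "$t<\tau_n$" replaced by membership in a neighbourhood base of the limit (for instance $\max(t,w,w^2/t,t/w)<\tau_n$).
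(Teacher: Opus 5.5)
Your proof is correct and takes the same approach as the paper. The paper also picks thresholds $t(n)$ with $d(x_{t,n},x_n)<\frac1n$ for $t\le t(n)$ and makes them a strictly decreasing null sequence via $t'(n)=\min\{t(n),\frac12 t'(n-1)\}$ (playing the role of your $\min\{\tau_1,\dots,\tau_n,1/n\}$); it then sets $x_t=x_{t,n}$ for $t\in(t'(n+1),t'(n)]$, which is your $n(t)=\max\{n:\tau_n>t\}$ up to endpoint conventions, and concludes with the same triangle inequality (its final estimate writes $t(n)$ where $t'(n)$ is meant, a slip your version avoids).
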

\begin{proof}
For each $n\in \mathbb{N}$, let $t(n)>0$ be a number such that 
$d(x_{t,n},\;x_n)< \frac{1}{n}$ for all $t\leq t(n)$ (there exists such a number, since $x_{t,n}\xrightarrow[t\to 0]{} x_n$). Inductively, define $t'(n)=\min\left\{t(n),\frac12 t'(n-1)\right\}$, with $t'(2)=t(2)$. Notice that $t'(n)$ is a strictly decreasing null-sequence, so we can define $x_t$ for all $t>0$ as follows:
	\[
	x_t \coloneqq \begin{cases*}
	x_{t,1} & when $t > t'(2)$,\\
	x_{t,n}\quad&when $t'(n) \geq t >t'(n+1)$.
	\end{cases*}
	\]
Indeed, $(x_t)$ is a subfamily of $\mathcal{F}$. Moreover we have $x_t\to x$: given $\eps>0$, let $N\in \mathbb{N}$ be large enough enough so that both $\frac{1}{N} <\eps$, and $d(x,\;x_{n})<\eps$ for all $n\geq N$. 
We shall prove that $|x_t-x|<2\eps$ for all $t<t(N)$. Let $t<t(N)$, and let $n\geq N$ be such that $t(n)\geq t > t(n+1)$. Then
\begin{align*}
d(x_t,\; x )= d(x_{t,n},\;x) \leq d(x_{t,n},\;x_{n})+d(x,\;x_{n}) 
\leq \frac{1}{n} + \eps \leq \frac{1}{N}+\eps \leq 2\eps,
\end{align*}
where the inequality $d(x_{t,n},\;x_{n})\leq \frac{1}{n}$ holds since $t\leq t(n)$.
\end{proof}

As an example, let us provide the full details for the diagonalization argument involving $z=z^{t,w}$.
For each $z\in L^2(\O;\mathbb{R}^3)$, let
\[F(z)= \frac12\int_\O Q_3\bra{ \begin{pNiceArray}{w{c}{1.3cm}w{c}{1.3cm}|c}
	\Block{2-2}{ g + x_3 \sqbra{\fII+ \begin{psmallmatrix}0&0\\0&f\end{psmallmatrix}}}& & \\ & &z\\ \cline{1-2} 0&0&\end{pNiceArray}+  B}dx.\]
By this point, as shown in \eqref{eq:finer-to-Q3}, we know that for each $z\in C_0^\infty(\O;\mathbb{R}^3)$ we have
\begin{equation}\label{eq:App-A-Itw-to-F}
J^{t,w}(z)\coloneqq \frac{1}{t^2}I^{t,w}(y^{t,w}[z]) \xrightarrow[t,w\to 0]{} F(z),
\end{equation}
where $y^{t,w}[z]$ is defined as in \eqref{eq:finer-y-tw-def}, with the dependence on $z$ being highlighted. Now, for each $x\in \O$, there corresponds a unique $z_{\text{min}}$ for which 
\[Q_2(g+x_3 \sqbra{\fII+ \begin{psmallmatrix}0&0\\0&f\end{psmallmatrix}}+\Sym \check{B})=Q_3\bra{ \begin{pNiceArray}{w{c}{1.3cm}w{c}{1.3cm}|c}
	\Block{2-2}{ g + x_3 \sqbra{\fII+ \begin{psmallmatrix}0&0\\0&f\end{psmallmatrix}}}& & \\ & &z_{\text{min}}\\ \cline{1-2} 0&0&\end{pNiceArray}+  B}\]
(see Lemma \ref{lem:bigQ2-is-quadratic} for more details; this follows from the convexity of $Q_3$). This defines a function $z_{\text{min}}: \O \to \mathbb{R}^3$; furthermore we have $z_{\text{min}}\in L^2(\O;\mathbb{R}^3)$ as seen from the coercivity of $Q_3$ (equation \eqref{eq:Q3-coercivity}). Now, by the definition of $z_{\text{min}}$, we have
\begin{equation}
\frac12 \int_\O Q_2\bra{g+ x_3 \sqbra{\fII+ \begin{psmallmatrix}0&0\\0&f\end{psmallmatrix}} +\check{B}}dx =  F(z_{\text{min}}) .
\end{equation}
We wish to find a sequence $z^{t,w}\in C_0^\infty(\O;\mathbb{R}^3)$ for which  
\begin{equation}\label{eq:App-A-1-wanted}
J^{t,w}(z^{t,w}) \xrightarrow[t,w\to 0]{} F(z_{\text{min}}).
\end{equation}
Since $C_0^{\infty}(\O;\mathbb{R}^3)$ is dense in $L^2(\O;\mathbb{R}^3)$, we can find a sequence $z^n\in C_0^{\infty}(\O;\mathbb{R}^3)$ such that $z^n \to z_{\text{min}}$ in $L^2(\O;\mathbb{R}^3) $ as $n\to \infty$; since $Q_3$ is a quadratic form, this implies that \[F(z^n)\xrightarrow[n\to\infty]{} F(z_{\text{min}}).\]  By equation \eqref{eq:App-A-Itw-to-F}, for each $n\in \mathbb{N}$ we have 
$J^{t,w}(z^n) \xrightarrow[t,w\to 0]{} F(z^n)$. A straightforward application of lemma \ref{lem:diagonalization} gives us our required $z^{t,w}$.

\newpage
\section{Plates with Rough Prestrain}\label{sec:appendix}
In this appendix we explicitly show how to obtain Theorem~\ref{thm:plates-gamma-convergence} from the (differently phrased) result in \cite{pg22}. 
Recall that $\D_t \coloneqq \bra{\p_1 \mid \p_2 \mid \frac{1}{t} \p_3}$ while 
$\D_{t,w} \coloneqq \bra{\p_1 \mid \frac{1}{w}\p_2 \mid \frac{1}{t} \p_3}$.
\begin{proof}[Proof of Theorem~\ref{thm:plates-gamma-convergence}.]
	Let $(y^t)\subseteq W^{1,2}(\O;\R^3)$ be a sequence with $I^{t,w}(y^t)\leq C t^2$. Define $(v^t)\in W^{1,2}(\O^w;\R^3)$ by $v^t(x)=y^t\bra{x_1,\frac{x_2}{w},x_3}$, where $\O^w \coloneqq \bra{0,L}\times \bra{-\frac{w}{2},\frac{w}{2}} \times \bra{-\frac12,\frac12} $. Notice that
	\begin{align*}
	I^{t,w}(y^t) &= \int_\O W\bra{\D_{t,w}y^t(x)(I+t B(x))}dx
	\\&\stackrel{x_2\mapsto x_2/w}{=} \frac{1}{w}\int_{\O^w} W \bra{ \D_t v^t(x)(I +t B_0(x))  }dx \eqqcolon \E^t(v^t),
	\end{align*}
	where $B_0(x)\coloneqq B(x_1,x_2/w,x_3)$, and $\E^t$ is the energy functional as considered in \cite{pg22}. Therefore $\E^t(v^t)\leq C t^2$, so as shown in \cite{pg22},  the sequence $\frac{1}{t^2}\E^t(v^t)$ converges (in the appropriate $\Gamma$-sense) to the limiting functional
	\begin{equation}\label{eq:appendix-functional}
	\widetilde{J}^w(v) = \frac12 \cdot\frac{1}{w}\int_{\O^w} Q_2\bra{x_3 \D v^\top \D \nu_v +\check{B}_0(x)-\int_{-\frac12}^{\frac12}\check{B}_0(x',t)dt}dx
	\end{equation}
	(the above form of the limiting functional is simpler than the one appearing in \cite{pg22} because we consider the case of homogenous material, i.e., $W$ depends only on the gradient). Here, $v$ is the $W^{1,2}$-limit of $v^t$, in a notion of compactness analogous to that described in Theorem~\ref{thm:plates-compactness} (cf. \cite[Theorem~1]{pg22}). With the notation of Theorem~\ref{thm:plates-compactness}, it follows that $v(x)=y\bra{x_1,\frac{x_2}{w},x_3}$. Let $\P$ be the orthogonal projection from $L^2(\O;\R^{2\times 2}_\sym)$ onto the space of $x_3$-affine functions. As discussed in \S\ref{sec:excess-energy}, it is straight forward to check that 
	$\P(f)(x)=\int_{-\frac12}^{\frac12} fdx_3 + x_3 \cdot 12 \int_{-\frac12}^{\frac12}x_3 f dx_3$.  
	Hence \eqref{eq:appendix-functional} reads
	\[
	\widetilde{J}^w(v) = \frac{1}{24} \cdot\frac{1}{w}\int_{(0,L)\times \bra{-\frac{w}{2},\frac{w}{2}}} Q_2\bra{\D v^\top \D\nu_v - \overline{\II}}dx' + \frac12\cdot \frac{1}{w} \int_{\O^w} Q_2\bra{(\id-\P)(\check{B}_0)}dx,
	\]
	where $\overline{\II}(x')=-12\int_{-\frac12}^{\frac12}x_3 \check{B}_0(x',x_3)dx_3$, as in Lemma~\ref{lem:Q2-bar-def}. Lastly, in order to obtain a functional in terms of $y$, another change of coordinates $x_2\mapsto w x_2$ shows that $\frac{1}{t^2}I^{t,w}(y^t)$ converges (again, in the appropriate $\Gamma$-sense) to
	\[
	J^w(y)\coloneqq
	\frac{1}{24}\int_S Q_2\bra{\II^w_y (x') - \overline{\II}(x')}dx'+
	\underbrace{ \frac12 \int_\O Q_2\bra{ (\id-\P)(\check{B}(x_1,x_2,x_3)) }dx   }_{\eqqcolon\, C_{\text{excess}}^{(0)}}
	,
	\]
	where $\II^w_y\coloneqq \D_wy^\top \D_w \nu_y$ and $\overline{\II}, C_{\text{excess}}^{(0)}$ are given in \eqref{eq:plates-gamma-convergence-reference}.
\end{proof}

\end{appendices}

\newpage
{\footnotesize
	\bibliographystyle{amsalpha}
	\bibliography{paper_refs}	

@article{gsd16,
	title={Elasticity and Fluctuations of Frustrated Nanoribbons},
	volume={116},
	ISSN={1079-7114},
	url={http://dx.doi.org/10.1103/PhysRevLett.116.258105},
	DOI={10.1103/physrevlett.116.258105},
	number={25},
	journal={Physical Review Letters},
	publisher={American Physical Society (APS)},
	author={Grossman, Doron and Sharon, Eran and Diamant, Haim},
	year={2016},
	month=jun }

@article{k1850,
	author = {Kirchhoff, G.},
	journal = {Journal für die reine und angewandte Mathematik},
	language = {ger},
	pages = {51-88},
	title = {Über das Gleichgewicht und die Bewegung einer elastischen Scheibe.},
	url = {http://eudml.org/doc/147439},
	volume = {40},
	year = {1850},
}

@article{zgds19,
	author = {Zhang, Mingming and Grossman, Doron and Danino, Dganit and Sharon, Eran},
	year = {2019},
	month = {08},
	pages = {},
	title = {Shape and fluctuations of frustrated self-assembled nano ribbons},
	volume = {10},
	journal = {Nature Communications},
	doi = {10.1038/s41467-019-11473-6}
}

@book{love27,
	title={A Treatise on the Mathematical Theory of Elasticity},
	author={Love, A.E.H.},
	year={1927},
	publisher={Cambridge University Press, Cambridge}
}

@article{ks14,
	title={A {R}iemannian approach to reduced plate, shell, and rod theories},
	volume={266},
	ISSN={0022-1236},
	url={http://dx.doi.org/10.1016/j.jfa.2013.09.003},
	DOI={10.1016/j.jfa.2013.09.003},
	number={5},
	journal={Journal of Functional Analysis},
	publisher={Elsevier BV},
	author={Kupferman, Raz and Solomon, Jake P.},
	year={2014},
	month=mar, pages={2989–3039} }

@inbook{ekm20,
	title = {Limits of Distributed Dislocations in Geometric and Constitutive Paradigms},
	ISBN = {9783030426835},
	ISSN = {1876-9896},
	url = {http://dx.doi.org/10.1007/978-3-030-42683-5_8},
	DOI = {10.1007/978-3-030-42683-5_8},
	booktitle = {Geometric Continuum Mechanics},
	publisher = {Springer International Publishing},
	author = {Epstein,  Marcelo and Kupferman,  Raz and Maor,  Cy},
	year = {2020},
	pages = {349–380}
}

@article{Maor25,
	title = {On material-uniform elastic bodies with disclinations and their homogenization},
	ISSN = {1741-3028},
	url = {http://dx.doi.org/10.1177/10812865251322412},
	DOI = {10.1177/10812865251322412},
	journal = {Mathematics and Mechanics of Solids},
	publisher = {SAGE Publications},
	author = {Maor,  Cy},
	year = {2025},
	month = apr 
}

@article{lssm21,
	title={Hierarchy of geometrical frustration in elastic ribbons: Shape-transitions and energy scaling obtained from a general asymptotic theory},
	volume={156},
	ISSN={0022-5096},
	url={http://dx.doi.org/10.1016/j.jmps.2021.104579},
	DOI={10.1016/j.jmps.2021.104579},
	journal={Journal of the Mechanics and Physics of Solids},
	publisher={Elsevier BV},
	author={Levin, Ido and Siéfert, Emmanuel and Sharon, Eran and Maor, Cy},
	year={2021},
	month=nov, pages={104579} }

@article{cmsh12,
	author = {Chen, Zi and Majidi, Carmel and Srolovitz, D.J. and Haataja, Mikko},
	year = {2012},
	month = {09},
	pages = {},
	title = {Continuum Elasticity Theory Approach for Spontaneous Bending and
	Twisting of Ribbons Induced by Mechanical Anisotropy}
}

@article{bls16,
	title = {Plates with Incompatible Prestrain},
	volume = {221},
	ISSN = {1432-0673},
	url = {http://dx.doi.org/10.1007/s00205-015-0958-7},
	DOI = {10.1007/s00205-015-0958-7},
	number = {1},
	journal = {Archive for Rational Mechanics and Analysis},
	publisher = {Springer Science and Business Media LLC},
	author = {Bhattacharya,  Kaushik and Lewicka,  Marta and Sch\"{a}ffner,  Mathias},
	year = {2016},
	month = jan,
	pages = {143–181}
}

@article{ko18,
	author = {Kohn, Robert and O’Brien, Ethan},
	year = {2018},
	month = {01},
	pages = {},
	title = {On the Bending and Twisting of Rods with Misfit},
	volume = {130},
	journal = {Journal of Elasticity},
	doi = {10.1007/s10659-017-9635-4}
}

@article{sls21,
	title = {{E}uclidean Frustrated Ribbons},
	author = {Si\'efert, Emmanuel and Levin, Ido and Sharon, Eran},
	journal = {Phys. Rev. X},
	volume = {11},
	issue = {1},
	pages = {011062},
	numpages = {11},
	year = {2021},
	month = {Mar},
	publisher = {American Physical Society},
	doi = {10.1103/PhysRevX.11.011062},
	url = {https://link.aps.org/doi/10.1103/PhysRevX.11.011062}
}

@article{fjm2002,
author = {Friesecke, Gero and James, Richard D. and Müller, Stefan},
title = {A theorem on geometric rigidity and the derivation of nonlinear plate theory from three-dimensional elasticity},
journal = {Communications on Pure and Applied Mathematics},
volume = {55},
year = {2002},
number = {11},
pages = {1461-1506},
doi = {https://doi.org/10.1002/cpa.10048},
url = {https://onlinelibrary.wiley.com/doi/abs/10.1002/cpa.10048},
eprint = {https://onlinelibrary.wiley.com/doi/pdf/10.1002/cpa.10048}
}

@article{fjm06,
	author = {Friesecke, Gero and James, Richard and Müller, Stefan},
	year = {2006},
	month = {05},
	pages = {183-236},
	title = {A Hierarchy of Plate Models Derived from Nonlinear Elasticity by Gamma-Convergence},
	volume = {180},
	journal = {Archive for Rational Mechanics and Analysis},
	doi = {10.1007/s00205-005-0400-7}
}

@misc{mm25,
	title={Rigorous analysis of shape transitions in frustrated elastic ribbons}, 
	author={Cy Maor and Maria Giovanna Mora},
	year={2025},
	eprint={2503.11779},
	archivePrefix={arXiv},
	primaryClass={math.AP},
	url={https://arxiv.org/abs/2503.11779}, 
}

@article{all19,
	author = {{Agostiniani, Virginia} and {Lucantonio, Alessandro} and {Lučić, Danka}},
	title = {Heterogeneous elastic plates with in-plane modulation of the target curvature and applications to thin gel sheets},
	DOI= "10.1051/cocv/2018046",
	url= "https://doi.org/10.1051/cocv/2018046",
	journal = {ESAIM: COCV},
	year = 2019,
	volume = 25,
	pages = "24",
}

@article{ll20,
	author = {Lewicka, Marta and Lučić, Danka},
	title = {Dimension Reduction for Thin Films with Transversally Varying Prestrain: Oscillatory and Nonoscillatory Cases},
	journal = {Communications on Pure and Applied Mathematics},
	volume = {73},
	number = {9},
	pages = {1880-1932},
	doi = {https://doi.org/10.1002/cpa.21871},
	url = {https://onlinelibrary.wiley.com/doi/abs/10.1002/cpa.21871},
	eprint = {https://onlinelibrary.wiley.com/doi/pdf/10.1002/cpa.21871},
	year = {2020}
}

@article{ms19,
	author = {Maor, Cy and Shachar, Asaf},
	year = {2019},
	month = {02},
	pages = {},
	title = {On the Role of Curvature in the Elastic Energy of Non-{E}uclidean Thin Bodies},
	volume = {134},
	journal = {Journal of Elasticity},
	doi = {10.1007/s10659-018-9686-1}
}

@article{
	aeks11,
	author = {Shahaf Armon  and Efi Efrati  and Raz Kupferman  and Eran Sharon },
	title = {Geometry and Mechanics in the Opening of Chiral Seed Pods},
	journal = {Science},
	volume = {333},
	number = {6050},
	pages = {1726-1730},
	year = {2011},
	doi = {10.1126/science.1203874},
	URL = {https://www.science.org/doi/abs/10.1126/science.1203874},
	eprint = {https://www.science.org/doi/pdf/10.1126/science.1203874}
}

@article{ldr96,
	title={The membrane shell model in nonlinear elasticity: a variational asymptotic derivation},
	author={Le Dret, Herv{\'e} and Raoult, Annie},
	journal={Journal of Nonlinear Science},
	volume={6},
	pages={59--84},
	year={1996},
	publisher={Springer}
}

@article{mm03,
	author = {Mora, Maria Giovanna and Müller, Stefan},
	year = {2003},
	month = {11},
	pages = {287-305},
	title = {Derivation of the nonlinear bending-torsion theory for inextensible rods by $\Gamma$ -convergence},
	volume = {18},
	journal = {Calculus of Variations and Partial Differential Equations},
	doi = {10.1007/s00526-003-0204-2}
}

@article{lewpak11,
	author = {Lewicka, Marta and Pakzad, Mohammad Reza},
	title = {Scaling laws for non-{E}uclidean plates and the {$W^{2,2}$} isometric immersions of {R}iemannian metrics},
	journal = {ESAIM: Control, Optimisation and Calculus of Variations},
	pages = {1158--1173},
	publisher = {EDP-Sciences},
	volume = {17},
	number = {4},
	year = {2011},
	doi = {10.1051/cocv/2010039},
	mrnumber = {2859870},
	zbl = {1300.74028},
	language = {en},
	url = {http://www.numdam.org/articles/10.1051/cocv/2010039/}
}

@article{sch07,
	title = {Plate theory for stressed heterogeneous multilayers of finite bending energy},
	journal = {Journal de Mathématiques Pures et Appliquées},
	volume = {88},
	number = {1},
	pages = {107-122},
	year = {2007},
	issn = {0021-7824},
	doi = {https://doi.org/10.1016/j.matpur.2007.04.011},
	url = {https://www.sciencedirect.com/science/article/pii/S0021782407000487},
	author = {Bernd Schmidt}
}

@article{fmp12,
	author = {Freddi, Lorenzo and Mora, Maria Giovanna and Paroni, Roberto},
title = {Nonlinear Thin-Walled Beams With a Rectangular Cross-Section - Part {I}},
	journal = {Mathematical Models and Methods in Applied Sciences},
	volume = {22},
	number = {03},
	pages = {1150016},
	year = {2012},
	doi = {10.1142/S0218202511500163},	
	URL = { 	
	https://doi.org/10.1142/S0218202511500163
	},
	eprint = { 	
	https://doi.org/10.1142/S0218202511500163
	}
}

@article{fhmp16b,
	author = {Freddi, Lorenzo and Hornung, Peter and Mora, Maria Giovanna and Paroni, Roberto},
	title = {A Variational Model for Anisotropic and Naturally Twisted Ribbons},
	journal = {SIAM Journal on Mathematical Analysis},
	volume = {48},
	number = {6},
	pages = {3883-3906},
	year = {2016},
	doi = {10.1137/16M1074862},
		URL = { https://doi.org/10.1137/16M1074862	},
	eprint = { 	https://doi.org/10.1137/16M1074862	
	}
}

@book{cynotes,
	author        = {Maor, Cy},
	title         = {Compact Course "Non-{E}uclidean Elasticity: thin bodies and material defects"},
	month         = {September},
	year          = {2023},
	publisher={Technische Universität Dresden}
}

@book{lewicka-prestressed,
	author        = {Lewicka, Marta},
	title         = {Calculus of Variations on Thin Prestressed Films},
	subtitle = {Asymptotic Methods in Elasticity},
	year          = {2023},
	publisher={Birkhäuser Cham},
	doi={10.1007/978-3-031-17495-7}
}

@book{antman95,
	author        = {Antman, Stuart S.},
	title         = {Nonlinear Problems of Elasticity},
	year          = {1995},
	publisher={Springer New York, NY},
	doi={10.1007/978-1-4757-4147-6}
}

@book{ciarlet00,
	author        = {Ciarlet, P.G.},
	title         = {Mathematical Elasticity, {Vol. I-III}},
	year          = {2000},
	publisher={North-Holland, Amsterdam}
}

@article{mm08, 
	author={Mora, M. G. and Müller, S.},
	title={Convergence of equilibria of three-dimensional thin elastic beams}, 
	volume={138},
	DOI={10.1017/S0308210506001120},
	number={4},
	journal={Proceedings of the Royal Society of Edinburgh: Section A Mathematics},
	year={2008}, 
	pages={873–896}}

@article{bff00,
	ISSN = {00222518, 19435258},
	URL = {http://www.jstor.org/stable/24901104},
	author = {A. Braides and I. Fonseca and G. Francfort},
	journal = {Indiana University Mathematics Journal},
	number = {4},
	pages = {1367--1404},
	publisher = {Indiana University Mathematics Department},
	title = {{3D-2D} Asymptotic Analysis for Inhomogeneous Thin Films},
	urldate = {2025-05-02},
	volume = {49},
	year = {2000}
}

@article{lrr17,
	author = {Lewicka, Marta and Raoult, Annie and Ricciotti, Diego},
	title = {Plates with incompatible prestrain of high order},
	journal = {Annales de l'I.H.P. Analyse non lin\'eaire},
	pages = {1883--1912},
	publisher = {Elsevier},
	volume = {34},
	number = {7},
	year = {2017},
	doi = {10.1016/j.anihpc.2017.01.003},
	mrnumber = {3724760},
	zbl = {1457.74121},
	language = {en},
	url = {https://www.numdam.org/articles/10.1016/j.anihpc.2017.01.003/}
}

@article{bnps22,
	title={A Homogenized Bending Theory for Prestrained Plates},
	volume={33},
	ISSN={1432-1467},
	url={http://dx.doi.org/10.1007/s00332-022-09869-8},
	DOI={10.1007/s00332-022-09869-8},
	number={1},
	journal={Journal of Nonlinear Science},
	publisher={Springer Science and Business Media LLC},
	author={Böhnlein, Klaus and Neukamm, Stefan and Padilla-Garza, David and Sander, Oliver},
	year={2022},
	month=dec }

@article{crs17,
	title = {On global and local minimizers of prestrained thin elastic rods},
	volume = {56},
	ISSN = {1432-0835},
	url = {http://dx.doi.org/10.1007/s00526-017-1197-6},
	DOI = {10.1007/s00526-017-1197-6},
	number = {4},
	journal = {Calculus of Variations and Partial Differential Equations},
	publisher = {Springer Science and Business Media LLC},
	author = {Cicalese,  Marco and Ruf,  Matthias and Solombrino,  Francesco},
	year = {2017},
	month = jul 
}

@article{pg22,
	title = {Dimension reduction through gamma convergence for general prestrained thin elastic sheets},
	volume = {61},
	ISSN = {1432-0835},
	url = {http://dx.doi.org/10.1007/s00526-022-02262-z},
	DOI = {10.1007/s00526-022-02262-z},
	number = {5},
	journal = {Calculus of Variations and Partial Differential Equations},
	publisher = {Springer Science and Business Media LLC},
	author = {Padilla-Garza,  David},
	year = {2022},
	month = jul 
}
}

\end{document}